\mathchardef\mhyphen="2D 
\newcommand\rwidecheck[1]{%
\savestack{\tmpbox}{\stretchto{%
  \scaleto{%
    \scalerel*[\widthof{\ensuremath{#1}}]{\kern-.6pt\bigwedge\kern-.6pt}%
    {\rule[-\textheight/2]{1ex}{\textheight}}
  }{\textheight}%
}{0.5ex}}%
\stackon[1pt]{#1}{\scalebox{-1}{\tmpbox}}%
}
\newcounter{saveenumerate}
\newcommand{\enumeratext}[1]{%
\setcounter{saveenumerate}{\value{enum\romannumeral\the\@enumdepth}}
\end{enumerate}
#1
\begin{enumerate}
\setcounter{enum\romannumeral\the\@enumdepth}{\value{saveenumerate}}%
}
\renewcommand*\env@matrix[1][c]{\hskip -\arraycolsep
  \let\@ifnextchar\new@ifnextchar
  \array{*\c@MaxMatrixCols #1}}
  \newcommand{\eqnum}{\leavevmode\hfill\refstepcounter{equation}\textup{\tagform@{\theequation}}} 
\newlength{\leftstackrelawd}
\newlength{\leftstackrelbwd}
\def\leftstackrel#1#2{\settowidth{\leftstackrelawd}%
{${{}^{#1}}$}\settowidth{\leftstackrelbwd}{$#2$}%
\addtolength{\leftstackrelawd}{-\leftstackrelbwd}%
\leavevmode\ifthenelse{\lengthtest{\leftstackrelawd>0pt}}%
{\kern-.5\leftstackrelawd}{}\mathrel{\mathop{#2}\limits^{#1}}}
\newcommand{\fc}{\mathfrak{c}}
\newcommand{\fd}{\mathfrak{d}}
\newcommand{\fC}{\mathfrak{C}}
\newcommand{\C}{\mathscr{C}}
\let\e\epsilon
\newcommand{\Fi}{\mathrm{fi}}
\newcommand{\FI}{\mathrm{FI}}
\newcommand{\I}{\mathcal{I}}
\newcommand{\J}{\mathcal{J}}
\newcommand{\calG}{\mathcal G}
\newcommand{\M}{\mathcal{M}}
\newcommand{\bigoh}{\mathcal{O}}
\newcommand{\RR}{\mathbb{R}}
\newcommand{\R}{\mathcal{R}}
\newcommand{\tp}{^{\mathsf{T}}}
\newcommand{\TV}{\mathrm{TV}}
\newcommand{\V}{\mathcal{V}}
\newcommand{\tjeps}{\tilde\jmath^{\hspace{0.08em}\epsilon}}
\newcommand{\tjdel}{\tilde\jmath^{\hspace{0.08em}\delta}}
\newcommand{\wt}{\widetilde}
\newcommand{\ds}{\displaystyle}
\newcommand{\gy}{\mathbf o}
\newcommand{\Rgy}{\R_\gy}
\newcommand{\Rint}{\R_{\mathrm{int}}}
\newcommand\weakto{\mathrel{\relbar\joinrel\rightharpoonup}}
\newcommand\weakstarto{\stackrel{*}{\weakto}}
\newcommand\narrowto{\mathrel{\xrightharpoonup{\text{narrow}}}}
\DeclareMathOperator{\diag}{diag}
\DeclareMathOperator{\Col}{Col}
\DeclareMathOperator{\Null}{Null}
\newcommand{\slow}{\mathrm{slow}}
\newcommand{\fast}{\mathrm{fast}}
\newcommand{\fcycle}{\mathrm{fcyc}}
\newcommand{\damped}{\mathrm{damp}}
\newcommand{\dcyc}{\mathrm{dcyc}}
\newcommand{\dnocyc}{\mathrm{dnocyc}}
\newcommand{\Rslow}{{\R_\slow}}
\newcommand{\Rfast}{{\R_\fast}}
\newcommand{\Rfcycle}{{\R_\fcycle}}
\newcommand{\Rdamped}{{\R_\damped}}
\newcommand{\Rdcyc}{{\R_\dcyc}}
\newcommand{\Rdnocyc}{{\R_\dnocyc}}
\let\div\relax
\DeclareMathOperator*\div{\mathrm{div}}
\DeclareMathOperator\Prob{Prob}
\def\One{\mathds{1}}
\newtheorem{theorem}{Theorem}[section]
\newtheorem{lemma}[theorem]{Lemma}
\newtheorem{proposition}[theorem]{Proposition}
\newtheorem{corollary}[theorem]{Corollary}
\newenvironment{remark}
  {\par\medbreak\refstepcounter{theorem}%
    \noindent\textbf{Remark~\thetheorem. }}%
  {\qed\par\medskip}
\numberwithin{equation}{section}
\author{Mark A. Peletier and D. R. Michiel Renger}
\title{Fast reaction limits via $\Gamma$-convergence of the Flux Rate Functional}
\date{\today}
\begin{document}
\maketitle
\begin{abstract}
	We study the convergence of a sequence of evolution equations for measures supported on the nodes of a graph. The evolution equations themselves can be interpreted as the forward Kolmogorov equations of  Markov jump processes, or equivalently as the equations for the concentrations in a network of linear reactions. 
	The jump rates or reaction rates are divided in two classes;  `slow' rates are constant, and  `fast' rates are scaled as~$1/\e$, and we prove the convergence in the fast-reaction limit $\e\to0$.
	
We establish a $\Gamma$-convergence result for the rate functional in terms of both the concentration at each node and the flux over each edge (the level-2.5 rate function). The limiting system is again described by a functional, and characterizes both fast and slow fluxes in the system. 
	
	This method of proof has three advantages. First, no condition of detailed balance is required. Secondly, the formulation in terms of concentration and flux leads to a short and simple proof of the $\Gamma$-convergence; the price to pay is a more involved compactness proof. Finally, the method of proof deals with approximate solutions, for which the functional is not zero but small, without any changes. 
\end{abstract}

%
%

\section{Introduction}

The aim of this paper is to prove a fast-reaction limit for a sequence of evolution equations on a graph. We first specify the system. 

\bigskip

Let $\calG = (\V,\R)$ be a finite directed diconnected graph with weights $\kappa^\epsilon:\R\to\lbrack0,\infty)$. For each edge $r\in\R$ we denote $r=(r_-,r_+)$, with $r_-,r_+\in\V$ the corresponding source and target nodes. We consider the classical problem of deriving effective equations for the flow on $(\V,\R)$ with two different rates:
\begin{align}
  \dot \rho^\epsilon(t) = -\div (\kappa^\epsilon \otimes \rho^\epsilon(t)), && \rho^\epsilon(0) \text{ fixed}.
\label{eq:ODE}
\end{align}
with discrete divergence $(\div A)_x := \sum_{r_-=x} A_r - \sum_{r_+=x}A_r$, product $(\kappa^\epsilon\otimes\rho)_{r\in \R}:=\kappa^\epsilon_r\rho_{r_-}$, and $t\in\lbrack0,T\rbrack$, $T>0$. 
 We assume that the space of edges is a disjoint union $\R=\Rslow\cup\Rfast$ so that
\begin{equation}
  \kappa^{\epsilon}_r =
  \begin{cases}
    \kappa_r,                      &r\in\Rslow,\\
    \tfrac{1}{\epsilon} \kappa_r.  &r\in\Rfast.
  \end{cases}
\label{eq:k eps}
\end{equation}
We are  interested in the limiting behaviour as $\epsilon\to0$, where the fast edges equilibrate instanteously onto a slow manifold. Such limits, also known as `Quasi-Steady-State Approximations', have a long history in the literature, see for example~\cite{Tikhonov1952} and \cite{Stiefenhofer1998}.

\subsection{$\Gamma$-convergence of the large-deviations rate}
\label{subsec:LDP}

Often, one is not only interested in convergence of the dynamics, but also in convergence of some variational structure such as a gradient structure, or more generally an `action' functional that is minimised by the dynamics~\eqref{eq:ODE}. Of course this convergence is particularly relevant if this action has a physical meaning. The functional that we study in this paper can be interpreted as an action functional in the following way.

Consider a microscopic system of $n$ independent particles $X^\epsilon_i(t)\in\V, i=1,\hdots,n$ that randomly jump from state $X^\epsilon_i(t_-)=r_-$ to a new state $X^\epsilon_i(t)=r_+$ with Markov intensity $\kappa^\epsilon_r$. This is a typical microscopic model for a (bio)chemical system of unimolecular reactions with multiple time scales. The concentration of particles in state $x$ is then $\rho^{n,\epsilon}_x(t):=n^{-1}\sum_{i=1}^n \mathds1_{\{X^\epsilon_i(t)=x\}}$, and the vector of random concentrations $\rho^{n,\epsilon}(t)$ converges to the deterministic solution $\rho^\epsilon(t)$ of~\eqref{eq:ODE} by Kurtz' classical result~\cite{Kurtz1970}. For  large but finite particle numbers $n$, there is a  small probability that $\rho^{n,\epsilon}(t)$ deviates significantly from $\rho^\epsilon(t)$. These  small probabilities are best understood through a large deviations principle~\cite{Feng1994,Leonard1995,Agazzi2018}:
\begin{subequations}
\begin{align}
{-n^{-1}}\log \Prob\big( &\rho^{n,\epsilon}\approx \rho \big)
    \stackrel{n\to\infty}{\sim}
  \I^\epsilon_0(\rho(0)) + \I^\epsilon(\rho), \quad\text{where}\\
  &\I^\epsilon(\rho):=
  \inf_{\substack{j\in L^1([0,T];\RR^\R):\\ \dot\rho = -\div j}} \quad \sum_{r\in \R} \int_{[0,T]}\!s\big(j_r(t) \mid \kappa^\epsilon_r \rho_{r_-}\!(t)\big)\,dt, 
    \label{eq:concentration RF}\\
  &s(a\mid b) :=
    \begin{cases}
      s(a\mid b)=a\log \mfrac{a}{b}-a+b,  &a,b>0,\\
      s(a \mid b)=b,                      &a=0,b\geq0,\\
      s(a \mid b)=\infty,                 &a<0, \ b< 0, \text{ or } a>0,b = 0,
    \end{cases}
\end{align}
\end{subequations}
and $\I^\epsilon_0$ reflects whatever randomness is taken for the initial concentration $\rho^{n,\epsilon}(0)$. We stress that this formula is typical for Markov jump processes; chosing a different microscopic model for the dynamics could lead to different functionals.

If the network satisfies detailed balance, then the rate functional~\eqref{eq:concentration RF} can be related to a gradient flow~\cite{Onsager1931I,Onsager1953I,MielkePeletierRenger2014,MPPR2017}. We shall revisit the detailed balance condition in Section~\ref{subsec:GF}. For a similar interpretation in terms of an action without the detailed balance condition, see~\cite{Bertini2015MFT,Renger2018a}. 

Note that $\I^\epsilon$ is indeed minimised by solutions $\rho^\epsilon$ of~\eqref{eq:ODE}. This implies that we can consider the equation $\I^\e=0$ as a variational formulation of the equation~\eqref{eq:ODE}; this is the point of view known as `curves of maximal slope'~\cite{Ambrosio2008} or the `energy-dissipation principle'~\cite{Mielke16}. An important advantage of this choice of formulation is that $\Gamma$-convergence of $\I^\epsilon$ implies converge of the minimising dynamics (see~\cite[Cor.~7.24]{DalMaso1993} and~\cite{Mielke16}); in other words, one can prove convergence of the solutions by proving $\Gamma$-convergence of the functionals. This is also the method that we adopt in this paper.

\subsection{$\Gamma$-convergence of the flux large-deviations rate}
\label{subsec:flux LDP}

One difficulty in proving $\Gamma$-convergence of the functional $\I^\epsilon_0+\I^\epsilon$, however,  is that $\I^\epsilon$ is implicitly defined by a constrained minimisation problem. The constrained infimum of the sum in~\eqref{eq:concentration RF} is an infimal convolution (see~\cite[Sec.~3.4]{MPPR2017}). This shows that the evolution of the concentrations in different nodes are strongly intertwined, which considerably complicates the mathematical analysis. For example, the related work~\cite{DisserLieroZinsl2018} requires an orthogonality assumption to decouple the concentrations.

We can however avoid this difficulty by considering a different functional instead. Observe that the variable $j_r(t)$ in \eqref{eq:concentration RF} has the interpretation of  a flux: it measures how much mass is transported through edge $r$ at time $t$. Naturally, one can rephrase~\eqref{eq:ODE} in terms of this flux as the coupled system
\begin{align}
  \dot \rho^\epsilon(t) = -\div j^\epsilon(t) \quad\text{and}\quad j^\epsilon(t)=\kappa^\epsilon \otimes \rho^\epsilon(t), && \rho^\epsilon(0) \text{ fixed}.
\label{eq:flux ODE}
\end{align}
On the level of the microscopic particle system one can also define the random particle flux $J^{n,\epsilon}$, which yields the large-deviation principle~\cite{Baiesi2009,Renger2018a,PattersonRenger2019}:
\begin{align}
&-n^{-1}\log \Prob\big( (\rho^{n,\epsilon},J^{n,\epsilon})\approx (\rho,j) \big)
    \stackrel{n\to\infty}{\sim}
  \I^\epsilon_0(\rho(0)) + \J^\epsilon(\rho,j), \qquad\text{where}\\[3\jot]
  &\J^\epsilon(\rho,j):= \begin{cases}  
  \ds \smash[b]{\sum_{r\in\R} \int_{[0,T]}}\!s\big(j_r(t)\mid \kappa^\epsilon_r \rho_{r_-}\!(t)\big)\,dt, &\text{if }\rho\in W^{1,1}([0,T];\RR^\V), j\in L^1([0,T];\RR^\R),  \\
              &\qquad \text{ and }\dot \rho=-\div j,\\[2\jot]
   \infty, &\text{otherwise}.
 \end{cases}
\label{eq:flux RF}
\end{align}
Indeed, the functional $\J^\e$ is related to~\eqref{eq:concentration RF} by $\I^\epsilon(\rho)=\inf_{\dot\rho=-\div j} J^\epsilon(\rho,j)$, which is consistent with the `contraction principle' in large-deviations theory. Its minimiser~\eqref{eq:flux ODE} follows the same evolution as the minimiser~\eqref{eq:ODE}, but provides with more information: the flux. From a physics perspective, this additional information is important to understand non-equilibrium thermodynamics; see for example~\cite{Bertini2015MFT}, \cite{MPPR2017} and \cite[Sec.~4]{Renger2018a}. From a mathematical perspective, we will use the property that the flux functional $\J^\epsilon$ is  a sum over edges to decompose networks into separate components.

The goal of this paper is to prove convergence of the functional $\I^\epsilon_0+\J^\epsilon$ to a limit functional, whose minimiser describes the effective dynamics for \eqref{eq:flux ODE}. As a consequence, we obtain $\Gamma$-convergence of the functional $\I^\epsilon_0+\I^{\epsilon}$, convergence of solutions of the flux ODE~\eqref{eq:flux ODE}, and convergence of solutions of the ODE~\eqref{eq:ODE}.

In order to track diverging fluxes and  vanishing concentrations, we shall introduce a number of rescalings before taking the $\Gamma$-limit, as we explain in the next section.

\subsection{Network decomposition: nodes}
\label{subsec:network decomposition nodes}

We decompose the network into different components according to their scaling behaviour. To explain the main ideas, consider the  example of Figure~\ref{fig:network}. Recall from \eqref{eq:k eps} that we assume that $\R=\R_\slow\cup\R_\fast$, where the slow edges have rates of order $1$, and the fast edges of order $1/\e$.

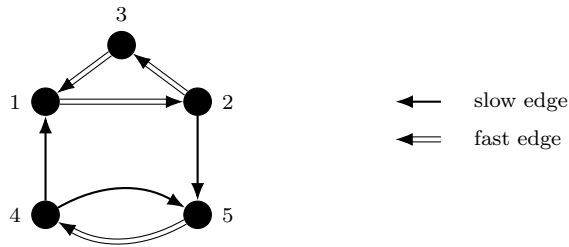
\begin{figure}[h!]
  \centering
  \begin{tikzpicture}[scale=0.5,baseline]
    \tikzstyle{every node}=[font=\scriptsize]

    \node[label=left:$1$](y1) at (0,0) [circle,draw,fill]{};
    \node[label=right:$2$](y2) at (4,0) [circle,draw,fill]{};
    \node[label=above:$3$](y3) at (2,1.5) [circle,draw,fill]{};
    \node[label=left:$4$](y4) at (0,-3) [circle,draw,fill]{}; 
    \node[label=right:$5$](y5) at (4,-3) [circle,draw,fill]{};
    \draw[-{Latex},double distance=1.5] (y1)-- (y2); 
    \draw[-{Latex},double distance=1.5] (y2)-- (y3); 
    \draw[-{Latex},double distance=1.5] (y3)-- (y1); 
    \draw[-{Latex},thick]               (y4)-- (y1); 
    \draw[-{Latex},thick]               (y2)-- (y5); 
    \draw[-{Latex},thick]               (y4) to [bend left] (y5); 
    \draw[-{Latex},double distance=1.5] (y5) to [bend left] (y4); 

    \draw[-{Latex},thick] (10.4,0)--(9.2,0);
      \node[anchor=west] at (11,0){slow edge};
    \draw[-{Latex},double distance=1.5] (10.4,-1)--(9.2,-1);
      \node[anchor=west] at (11,-1){fast edge};
  \end{tikzpicture}
\caption{An example of a network with slow and fast edges.}
\label{fig:network}
\end{figure}

The first step in the decomposition is to categorise the nodes. In the example, node $5$ is expected to have low concentration,  since any mass at node 5  will be quickly transported to node $4$. We make this statement precise by considering the equilibrium concentration. Since we assume the network to be diconnected, there  exists a unique equilibrium concentration $0<\pi^\epsilon\in\RR^\V$ for the dynamics~\eqref{eq:ODE}; we will always assume that $\pi^\e$ is normalized, i.e.\ $\sum_{x\in\V} \pi^\e_x = 1$. 
We use the equilibrium concentrations to  subdivide the nodes into two classes, $\V=\V_0\cup\V_1$, where
\begin{align}
  \V_0:=\{x\in\V: \pi^\epsilon_x \xrightarrow{\epsilon\to0} \pi_x>0\}, &&\text{and}&&
  \V_1:=\{x\in\V: \tfrac1\epsilon\pi^\epsilon_x \xrightarrow{\epsilon\to0} \tilde\pi_x>0\},
\label{eq:pi convergence}
\end{align}
and the tilde is used to stress that the quantity is rescaled. 
This decomposition implies an assumption that $\pi^\epsilon_x$ is either of order $1$ or of order $\epsilon$. In fact, one can construct networks with $\R=\R_\slow\cup\R_\fast$ with stationary states $\pi^\epsilon_x$ of order $\e^2$, $\e^3$, or higher, but in this paper such networks will be ruled out by our assumption that there are no `leaked' fluxes (see below). We introduce a further subdivision of the nodes after categorising the fluxes.

\subsection{Network decomposition: fluxes}
\label{subsec:network decomposition fluxes}

We expect that $j^\epsilon_r$ is comparable to $\kappa^\epsilon_r \rho^\epsilon_{r_-}$, which in turn we expect to be comparable to $\kappa^\epsilon_r \pi^\epsilon_{r_-}$. Hence the flux or amount of mass being transported through an edge $r$ not only depends on the order of $\kappa^\epsilon_r$, but also on the amount of available mass in the source node $r_-$, of order $\pi^\epsilon_{r_-}$. Therefore the scaling behaviour of the flux falls into one of the following four different categories:
\begin{center}
\begin{tabular}{r || l l | l l |}
$j^\epsilon_r$  & \multicolumn{2}{c|}{$r_-\in\V_0$} & \multicolumn{2}{c|}{$r_-\in\V_1$} \\
\hline
$r\in\Rslow$ & $\bigoh(1)$  &``slow'' & $\bigoh(\epsilon)$ & ``leak'' \\
$r\in\Rfast$ & $\bigoh(1/\epsilon)$ &``fast cycle''  & $\bigoh(1)$       & ``damped'' \\
\hline
\end{tabular}
\end{center}
In this paper we rule out ``leak'' fluxes by assumption, so that $\R=\Rslow\cup\Rfcycle\cup\Rdamped$, with
\begin{align*}
  \Rfcycle:=\{r\in \Rfast: r_- \in \V_0\}  &&\text{and}&&  \Rdamped:=\{r\in\Rfast: r_-\in \V_1\}.
\end{align*}

\begin{figure}[h!]
  \centering
  \begin{tikzpicture}[scale=0.5,baseline=-4.5em]
    \tikzstyle{every node}=[font=\scriptsize]

    \node[label=left:$1$](x1) at (0,0) [circle,draw,fill]{};
    \node[label=right:$2$](x2) at (4,0) [circle,draw,fill]{};
    \node[label=above:$3$](x3) at (2,1.5) [circle,draw,fill]{}; 
    \node[label=left:$4$](x4) at (0,-3) [circle,draw,fill]{}; 
    \node[label=right:$5$](x5) at (4,-3) [circle,draw,fill=gray]{}; 
    \draw[-{Latex},double distance=1.5] (x1)-- (x2);
    \draw[-{Latex},double distance=1.5] (x2)-- (x3); 
    \draw[-{Latex},double distance=1.5] (x3)-- (x1);
    \draw[-{Latex},thick]               (x4)-- (x1);
    \draw[-{Latex},thick]               (x2)-- (x5) ;
    \draw[-{Latex},thick]               (x4) to [bend left] (x5);
    \draw[-{Latex},double distance=1.5,dashed] (x5) to [bend left] (x4);
      \begin{scope}
        \path[clip] (x5) to [bend left] (x4) -- (x4.center)--(x5.center)--cycle; 
        \draw[-{Latex},double distance=1.5] (x5) to [bend left] (x4);
      \end{scope}
      \draw[white,line width=1.2pt,{-{Latex}[black]}] (x5) to [bend left] (x4);

    \node(y1) at (9.75,1.5) [circle,draw,fill]{};
      \node[anchor=west] at (11,1.5){$\V_0$-node};
    \node(y1) at (9.75,0.5) [circle,draw,fill=gray]{};
      \node[anchor=west] at (11,0.5){$\V_1$-node};
    \draw[-{Latex},thick] (10.4,-0.5)--(9.1,-0.5);
      \node[anchor=west] at (11,-0.5){slow flux};
    \draw[-{Latex},double distance=1.5] (10.4,-1.5)--(9.1,-1.5);
      \node[anchor=west] at (11,-1.5){fast cycle flux};
    \draw[-{Latex},double distance=1.5,dashed] (10.4,-2.5) to (9.1,-2.5);
      \begin{scope}
        \path[clip] (9.6,-2.5) -- (10.4,-2.5) -- (10.4,-2.3) -- (9.1,-2.3)-- (9.1,-2.5);
        \draw[-{Latex},double distance=1.5] (10.4,-2.5) to (9.1,-2.5);
      \end{scope}
      \draw[white,line width=1pt,{-{Latex}[black]}] (10.4,-2.5) to (9.1,-2.5);
      \node[anchor=west] at (11,-2.5){damped flux};
  \end{tikzpicture}
  \qquad\quad
    \begin{tikzpicture}[scale=0.5]
    \tikzstyle{every node}=[font=\footnotesize]
    \node(x1) at (0,0.5) [circle,draw,fill,inner sep=2]{};
    \node(x2) at (1.5,0.5) [circle,draw,fill,inner sep=2]{};
    \node(x3) at (0.75,1.2) [circle,draw,fill,inner sep=2]{}; 
    \draw[double distance=0.75,-{latex}] (x1)-- (x2);
    \draw[double distance=0.75,-{latex}] (x2)-- (x3); 
    \draw[double distance=0.75,-{latex}] (x3)-- (x1);

    \node[label=right:$123$,minimum size=3em](x123) at (0.75,0.7) [circle,draw]{};
    \node[label=left:$4$](x4) at (0.375,-3) [circle,draw,fill]{};
    \node[label=right:$5$](x5) at (1.125,-3) [circle,draw,fill=gray]{}; 
    \draw[-{Latex},thick]            (x4) to[bend left] (x123);
    \draw[-{Latex},thick]             (x123) to[bend left] (x5);
    \draw[-{Latex},thick]            (x4) to [out=90,in=90,looseness=3] (x5);

    \draw[-{Latex},double distance=1.5,dashed] (x5) to [out=270,in=270,looseness=3] (x4);
      \begin{scope}
        \path[clip] (x5) to [out=270,in=270,looseness=3] (x4) -- (x4.center)--(x5.center)--cycle;
        \draw[-{Latex},double distance=1.5] (x5) to [out=270,in=270,looseness=3] (x4);
      \end{scope}
      \draw[{-{Latex}[black]},white,line width=1pt] (x5) to [out=270,in=270,looseness=3] (x4);
  \end{tikzpicture}
\caption{The example from Figure~\ref{fig:network}, redrawn using the categorisation of nodes and fluxes (left); the final reduction to a two-node network in the limit $\e\to0$ (right). \label{fig:network2}}
\end{figure}
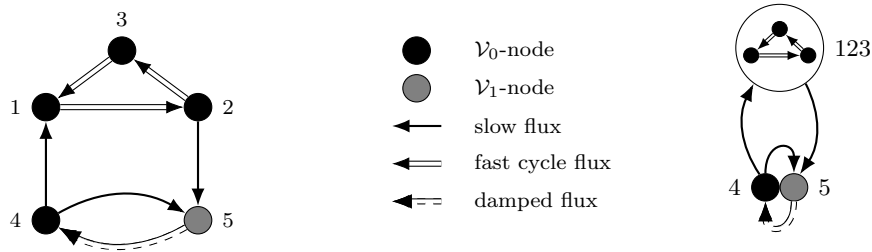

Let us now explain these four categories in more detail by considering the example network of Figure~\ref{fig:network}, which can now be redrawn as Figure~\ref{fig:network2}. 

\begin{enumerate}
\item What we shall call the \emph{slow fluxes} are fluxes through a slow edge that start at a node in $\V_0$. Typically, these slow fluxes will be of order $\bigoh(1)$, and they depend on $\epsilon$ only indirectly through dependence on the other fluxes. 

\item For the fast edges however, there is a fundamental difference between the fluxes $1\to2\to3\to1$ and the flux $5\to4$. The three fluxes $1\to2\to3\to1$ constitute a cycle of fast edges, with fluxes of order $\bigoh(1/\epsilon)$. Therefore mass will rotate very fast through this cycle, and in the limit $\epsilon\to0$, the mass present in the cycle will instanteneously equilibrate over these three edges. Moreover, any mass inserted into this cycle through the slow flux $4\to1$ will also instantaneously equilibrate over the nodes in the cycle, and any mass removed from the cycle through the slow flux $2\to5$ may be withdrawn from any node in the cycle. Practically this means that in the limit the cycle/diconnected component $1\to2\to3\to1$ acts as one node $\fc:=\{1,2,3\}$. We shall see in Lemma~\ref{lem:fast cycles} that all edges with $r\in\Rfast$ and $r_-\in\V_0$ are indeed  part of a cycle, which justifies the name \emph{fast cycle}. 

\item By contrast, the fast edge $5\to4$ is not part of a fast diconnected component. One does expect mass in node $5$ to be transported very fast into node $4$, but since there is no fast inflow, the mass in node $5$ will be strongly depleted after the initial time. After this, the amount of mass that will be actually transported through edge $5\to4$ is fully subject to the amount of inflow of mass into node $5$ by the slow fluxes $2\to5$ and $4\to5$, and will therefore be of $\bigoh(1)$. We shall call the flux $5\to4$ a \emph{damped flux}; its corresponding edge is fast, but the flux is damped by the fact that there is not enough mass available in the source node $5$. In the limit, any mass that is inserted into node $5$ from node $2$ or $4$ will be immediately pushed into node $4$. 

\item Now imagine a flux $5\to1$, not drawn in the picture. Since there is a damped flux going out of node $5$, almost all mass from node $5$ will follow that flux into node $4$, whereas very little mass from node $5$ would leak away into node $1$. We shall call such fluxes \emph{leak fluxes}. Since they contribute little to the behaviour of the whole network we rule out this possibility by assumption. This also rules out the possibility of higher orders of $\pi^\epsilon_x$ as mentioned above.
\end{enumerate}
An even further subdivision of $\Rdamped$ will be discussed in Section~\ref{subsec:spikes}, but this will not be needed in the general discussion.

\subsection{Network decomposition: connected components}
\label{subsec:connected components}

After categorising the fluxes, we now further subdivide the nodes of $\V_0$ into $\V_0=\V_{0\fcycle}\cup\V_{0\slow}$, consisting of nodes that are part of a fast cycle and the remainder:
\begin{align*}
  \V_{0\fcycle}:=\{x\in\V_0: \exists r\in\Rfcycle, r_-=x \},
  &&\text{and}&&
  \V_{0\slow}:=\V_0\backslash\V_\fcycle.
\end{align*}
The notation reflects the expectation that the concentration in the nodes in $\V_{0\fcycle}$ will instantenously equilibrate over the diconnected components of the graph $(\V_{0},\Rfcycle)$. We collect these components in the set
\begin{equation*}
  \fC:=\left\{ \fc\subset \V_{0} :\  \forall x,y \in\fc, \  \exists (r^k)_{k=1}^K\subset\Rfcycle,\  r^1_-=x, r^k_+=r^{k+1}_-,r^K_+=y\right\}.
\end{equation*}
To each $\fc\in \fC$ corresponds the equilibrium mass
\begin{equation}
  \pi_\fc^\e:=\sum_{x\in\fc} \pi_x^\e, \qquad c\in\fC.
\label{eq:pic}
\end{equation}
We will see in Lemma~\ref{lem:fast cycles} that a component $\fc\in\fC$ can be considered a union of cycles in the graph $(\V_{0\fcycle},\Rfcycle)$. Consequently, if there exists a fast-cycle path from $x$ to $y$ then  there also exists a fast-cycle path from $y$ to $x$. This remark also implies that each fast component $\fc$ is a subset of $\V_{0\fcycle}$.

Observe that, as illustrated in Figure~\ref{fig:network2}, we do not combine the nodes in $\V_1$ and $\V_0$ into single nodes; instead we preserve the nodes, and we  keep track of the fast cycle as well as the damped fluxes. This is motivated by our Theorem~\ref{th:equicoercivity}, which yields sufficient compactness in the $\V_1$-concentrations, damped fluxes and fast cycle fluxes.


\subsection{Rescaled flux and initial functionals}
\label{sec:rescaled}


In Sections~\ref{subsec:network decomposition nodes} and \ref{subsec:network decomposition fluxes} we categorised the nodes and fluxes by their typical scaling behaviour. We shall prove that the scaling behaviour of these categories is not only typical for the effective dynamics but actually for any dynamics with finite large-deviation cost. In order to do so we rescale all concentrations and fluxes according to their respective scalings.

We expect concentrations $\rho^\epsilon_x$ to follow $\pi_x^\e$, and therefore to be of order order $1$ on $\V_0$ and of order $\epsilon$ on $\V_1$. This motivates the rescaling the concentrations by working with the densities $u^\e$, defined by
\begin{equation*}
  u^\e_x(t):=\frac{\rho^\e_x(t)}{\pi^\e_x(t)},
\end{equation*}
where $x\in\V$ or $x\in\tilde\V_0\cup\V_1$, using~\eqref{eq:pic}. Although $\V_{0\fcycle}=\bigcup\fC$, we study $u^\e_x(t)$ for $x\in\V_{0\slow}\cup\V_{0\fcycle}\cup\fC$, assuming that $u^\e_\fc$ and $u^\e_x, x\in\fc$ are related by
\begin{equation}
  \pi^\e_\fc u^\e_\fc(t) = \sum_{x\in\fc} \pi^\e_x u^\e_x(t),
\label{eq:sum density eps}
\end{equation}
which we consider as a special continuity equation, additional to $\dot\rho^\e=-\div j^\e$. The distinction between $u^\e_x$ and $u^\e_\fc$ allows for two different notions of compactness: a weaker compactness for $u^\e_x$ with $x\in\V_{0\fcycle}$, and a stronger compactness for $u^\e_\fc$ for any $\fc\in\fC$.


As explained in Section~\ref{subsec:network decomposition fluxes}, the fluxes are expected to scale as $j^\epsilon_r(t)=\bigoh(\kappa^\epsilon_r\pi^\epsilon_{r_-})$. The slow and damped fluxes are of order $1$ and therefore need not be rescaled. For fast cycle fluxes, of order $1/\epsilon$, we introduce the rescaled flux $\tjeps_r$, defined by
\begin{align*}
  j^\epsilon_r(t)=: \kappa_r^\epsilon\rho^\epsilon_{r_-}\!(t) + \mfrac{1}{\sqrt{\epsilon}}\tjeps_r = \mfrac{1}{\epsilon}\kappa_r \pi^\epsilon_{r_-}\!(t) u^\epsilon_{r_-}\!(t) + \mfrac{1}{\sqrt{\epsilon}}\tjeps_r,
  &&r\in\Rfcycle.
\end{align*}
It turns out that this deviation from $\kappa_r^\e\rho_r^\e$ of order $1/\sqrt\e$ is the right choice for sequences along which $\I_0^\e + \J^\e$ is bounded, since this scaling is natural in the context of the compactness and $\Gamma$-limit results that we prove below.

To shorten the expressions we shall write
\begin{align*}
  u_{\V_{0\slow}}:= (u_x)_{x\in\V_{0\slow}}, \quad
  u_{\V_{0\fcycle}}:= (u_x)_{x\in \V_{0\fcycle}}, \quad
  u_{\fC}:= (u_\fc)_{\fc\in\fC}, \quad
  u_{\V_1}:= (u_x)_{x\in\V_1},\\
  j_{\Rslow}:=(j_r)_{r\in\Rslow}, \quad
  j_{\Rdamped}:=(j_r)_{r\in\Rdamped}, \quad
\tilde\jmath_{\Rfcycle}:=(\tilde\jmath_r)_{r\in\Rfcycle},
\end{align*} 
and finally by a slight abuse of notation $(u,j):=( u_{\V_{0\slow}},  u_{\V_{0\fcycle}},  u_{\fC}, u_{\V_1},  j_{\Rslow},  j_{\Rdamped}, \tilde\jmath_{\Rfcycle})$.
With these rescalings and notation we now rewrite the large-deviations rate functional~\eqref{eq:flux RF} as:
\begin{align}
  \tilde\J^\epsilon(u,j)
  &=\tilde\J^\epsilon\big(u_{\V_{0\slow}},u_{\V_{0\fcycle}},u_{\fC},u_{\V_1},j_{\Rslow},j_{\Rdamped},\tilde\jmath_{\Rfcycle}\big) \notag\\
  &\qquad:=\J^\epsilon\big(\pi^\epsilon u, (j_\Rslow,j_\Rdamped,\epsilon^{-1}\kappa\otimes\pi^\epsilon u^\epsilon + \epsilon^{-1/2}\tilde\jmath_\Rfcycle)\big) \notag\\
  &\qquad=\underbrace{\sum_{r\in\Rslow} \int_{[0,T]}\!s\big(j_r(t)\mid \kappa_r \pi^\epsilon_{r_-}\!u_{r_-}\!(t)\big)\,dt}_{=:\tilde\J^\epsilon_\slow(u_{\V_{0\slow}},u_{\V_{0\fcycle}})} \notag\\
  &\qquad\quad+\underbrace{\sum_{r\in\Rdamped} \int_{[0,T]}\!s\big(j_r(t)\mid \tfrac{1}{\epsilon}\kappa_r\pi^\epsilon_{r_-}\!u_{r_-}\!(t)\big)\,dt}_{=:\tilde\J^\epsilon_\damped(u_{\V_1},j_\Rdamped)} \notag\\
  &\qquad\quad+\underbrace{\sum_{r\in\Rfcycle} \int_{[0,T]}\!s\Big(\tfrac{1}{\epsilon}\kappa_r \pi^\epsilon_{r_-}\!u_{r_-}\!(t) + \mfrac{1}{\sqrt{\epsilon}}\tilde\jmath_r(t)  \Bigm| \tfrac1\epsilon\kappa_r\pi^\epsilon_{r_-}\!u_{r_-}\!(t)\Big)\,dt}_{=:\tilde\J^\epsilon_\fcycle(u_{\V_{0\fcycle}},\tilde\jmath_\Rfcycle)},
\label{eq:flux RF rescaled}
\end{align}
where $\tilde\J^\epsilon=\infty$ if one of the conditions of~\eqref{eq:flux RF} and \eqref{eq:sum density eps} is violated. Recall that $\pi^\epsilon_{r_-}\approx\pi_{r_-}$ for $r\in\Rslow$ and $\pi^\epsilon_{r_-}\approx \epsilon\tilde\pi_{r_-}$ for $r\in\Rdamped$, so that the two functionals $\tilde\J^\epsilon_\slow$ and $\tilde\J^\epsilon_\damped$ are very similar.

In order to control the initial condition we include the initial large-deviation rate function~$\I^\epsilon_0$ in the analysis. As mentioned in Section~\ref{subsec:LDP}, this function depends on the choice of the initial probability. As is common, we  choose the random dynamics to start independently at the invariant measure. Since linear reactions correspond to independent copies of the process, the particles modelled by the invariant measure are also independent, and hence $\I^\epsilon_0\big(\rho(0)\big) = \sum_{x\in\V} s\big(\rho_x(0)\mid \pi^\epsilon\big)$ by Sanov's Theorem~\cite[Th.~6.2.10]{Dembo1998}. We again rescale this functional to work with densities instead:
\begin{align}
\label{def:I0}
  \tilde\I^\epsilon_0\big(u(0)\big) &= \tilde\I^\epsilon_0\big(u_{\V_{0\slow}}(0),u_{\V_{0\fcycle}}(0),u_{\fC}(0),u_{\V_1}(0)\big) :=
\I^\epsilon_0\big(\pi^\epsilon\otimes u(0)\big) \notag\\
  &:=\sum_{x\in\V_{0\slow}} s\big(\pi^\epsilon_x u_x(0) \mid \pi^\epsilon_x \big) + \sum_{x\in\V_{0\fcycle}} s\big(\pi^\epsilon_x u_x(0) \mid \pi^\epsilon_x \big) + \sum_{x\in\V_1} s\big(\pi^\epsilon_x u_x(0) \mid \pi^\epsilon_x \big).
\end{align}
The minimiser of $\tilde \I_0$ is the vector of densities all equal to one. 


\subsection{Main results: compactness and $\Gamma$-convergence}
\label{sec:main results}

We now focus on the $\Gamma$-limit of the rescaled functional $\tilde\I^\epsilon_0+\tilde\J^\epsilon$, in the space 
\begin{multline*}
  \Theta:=  
  C([0,T];\RR^{\V_{0\slow}})
    \times
  L^\infty([0,T];\RR^{\V_{0\fcycle}})
    \times
  C([0,T];\RR^{\fC})  
    \times
  \M([0,T];\RR^{\V_1})\\
    \times
  L^\C([0,T];\RR^{\Rslow})
    \times
  \M([0,T];\RR^{\Rdamped})
    \times
  L^\C([0,T];\RR^{\Rfcycle}),
\end{multline*}
where $C$ is the space of continuous functions, $\M$ denotes spaces of bounded measures, and $L^\C$ denote Orlicz spaces corresponding to the nice Young function (see Section~\ref{subsec:Orlicz}):
\begin{equation*}
  \C(a):=\inf_{p-q=a} s(p \mid 1) + s(q \mid 1).
\end{equation*}
We always make the implicit assumption that $u_\fC$ and $u_{\V_{0\fcycle}}$ are connected by~\eqref{eq:sum density eps}.

We make $\Theta$ into a topological space by equipping each space $C$ with the uniform topology, each $L^\infty$ and $L^\C$ with their weak-\textasteriskcentered\ topologies and each measure space $\M$ with the narrow topology (defined by duality with continuous functions).

Of course  $\Gamma$-convergence properties strongly depend on the chosen topology. In fact, it is known that different topologies may lead to different $\Gamma$-limits~\cite[Ch.~6]{DalMaso1993}, \cite[Sec.~2.6]{Mielke2016}. The choice of this particular topological space $\Theta$ is motivated by our first main result:
\begin{theorem}[Equicoercivity]
Let $(u^\epsilon,j^\epsilon)_{\epsilon>0}\subset\Theta$ such that
\begin{equation*}
  \tilde\I^\epsilon_0\big(u^\epsilon(0)\big) + \tilde\J^\epsilon(u^\epsilon,j^\epsilon)\leq C \qquad\text{for some } C>0.
\end{equation*}
Then there exists a $\Theta$-convergent subsequence.
\label{th:equicoercivity}
\end{theorem}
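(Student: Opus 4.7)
The plan is to extract compactness separately in each of the seven factors of $\Theta$, using the uniform bound $\tilde\I^\epsilon_0(u^\epsilon(0))+\tilde\J^\epsilon(u^\epsilon,j^\epsilon)\leq C$ together with a single analytic workhorse, the Legendre duality inequality
\[
s(a\mid b)\geq a\xi - b(e^\xi-1),\qquad \xi\in\RR,
\]
applied with suitably chosen $\xi$, plus a handful of structural facts about the network (notably Lemma~\ref{lem:fast cycles} and the no-leak hypothesis).

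First I would use conservation of total mass under $\dot\rho^\epsilon=-\div j^\epsilon$ together with the initial bound from $\tilde\I^\epsilon_0$ to obtain $\|\rho^\epsilon_x\|_{L^\infty_t}\leq C$. Since $\pi_x^\epsilon$ is bounded away from zero on $\V_0$, this gives uniform $L^\infty_t$-bounds on $u^\epsilon_{\V_{0\slow}}$, $u^\epsilon_{\V_{0\fcycle}}$, and (via~\eqref{eq:sum density eps}) on $u^\epsilon_\fC$. Choosing $\xi=\log 2$ in the duality inequality yields $s(a\mid b)\geq(\log 2)a-b$; since the references in $\tilde\J^\epsilon_\slow$ and $\tilde\J^\epsilon_\damped$ are already $L^1$-bounded, this produces $L^1$-bounds on $j^\epsilon_\Rslow$ and $j^\epsilon_\Rdamped$. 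A refinement, taking $\xi=\log|A|^{-1}$ over small sets $A\subset[0,T]$, yields the de la Vall\'ee--Poussin condition and thereby the stronger Orlicz bound $\int_0^T\C(j^\epsilon_r)\,dt\leq C$ for $r\in\Rslow$, hence weak-\textasteriskcentered\ precompactness in $L^\C$. For $\tilde\jmath^\epsilon_\Rfcycle$, the quadratic expansion $s(b^\epsilon+\tilde\jmath/\sqrt\epsilon\mid b^\epsilon)\approx \tilde\jmath^2/(2\epsilon b^\epsilon)$ combined with $b^\epsilon=\bigoh(1/\epsilon)$ on $\Rfcycle$ makes the cost of each unit of $\tilde\jmath^\epsilon_r$ of order one---this is precisely why the $1/\sqrt\epsilon$ rescaling is correct---yielding analogously an Orlicz bound and weak-\textasteriskcentered\ precompactness in $L^\C$.

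For the $\V_1$-components, which may blow up pointwise, the plan is to exploit the no-leak assumption, which forces every $x\in\V_1$ to be the tail of at least one damped edge $r$. Applying the duality inequality in reverse, with $\xi=-1$, gives $b^\epsilon_r(t)\leq \mathrm{const}\cdot(j^\epsilon_r(t)+s(j^\epsilon_r(t)\mid b^\epsilon_r(t)))$; since $b^\epsilon_r\asymp u^\epsilon_x$ and both terms on the right are $L^1_t$-bounded, this yields $\int_0^T u^\epsilon_x(t)\,dt\leq C$, so that $u^\epsilon_{\V_1}\mathcal L^1$ has uniformly bounded total variation and is narrowly precompact in $\M([0,T];\RR^{\V_1})$; the same $L^1$-bound gives narrow precompactness of $j^\epsilon_\Rdamped$ in $\M([0,T];\RR^{\Rdamped})$.

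For the factors where uniform (rather than merely weak) convergence is claimed, equicontinuity is required. For $x\in\V_{0\slow}$, the continuity equation $\dot\rho^\epsilon_x=-\div j^\epsilon$ involves only slow and damped fluxes incident to $x$, all bounded in $L^1$; Arzel\`a--Ascoli then delivers a uniformly convergent subsequence of $u^\epsilon_x=\rho^\epsilon_x/\pi^\epsilon_x$. For each cycle component $\fc\in\fC$, Lemma~\ref{lem:fast cycles} ensures that internal fast-cycle fluxes cancel in $\dot\rho^\epsilon_\fc=\sum_{x\in\fc}\dot\rho^\epsilon_x$, leaving only boundary fluxes of order one, so $\rho^\epsilon_\fc$ and hence $u^\epsilon_\fc=\rho^\epsilon_\fc/\pi^\epsilon_\fc$ are equicontinuous. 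A diagonal argument then extracts a single subsequence convergent in all seven factor topologies. The main obstacle I anticipate is the $\V_1$-mass bound: the delicate coupling between damped fluxes and their source concentrations---inverted through the duality inequality and combined with the structural no-leak hypothesis---is what controls the otherwise divergent densities $u^\epsilon_{\V_1}$ and justifies the choice of the narrow measure topology on that factor.
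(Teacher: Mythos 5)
Your proposal correctly identifies the overall architecture (mass conservation for $L^\infty$ on $\V_0$, Orlicz bounds for slow/fast-cycle fluxes, narrow compactness for $\V_1$-densities and damped fluxes, Arzel\`a--Ascoli for the $C([0,T])$ factors) and the rescaling heuristic behind the $1/\sqrt\epsilon$ scaling is the right intuition. However, there are two genuine gaps that the paper handles with tools you have not invoked.

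The first and most serious is the $L^1$-bound on $u^\epsilon_{\V_1}$. Your argument is circular: to obtain $\int_0^T u^\epsilon_x\,dt\leq C$ via the duality inequality with $\xi=-1$ you need the $L^1$-bound on $j^\epsilon_{\Rdamped}$; but in the first paragraph you justify the $L^1$-bound on $j^\epsilon_{\Rdamped}$ by asserting the references $b^\epsilon_r=\tfrac1\epsilon\kappa_r\pi^\epsilon_{r_-}u^\epsilon_{r_-}$ (with $r_-\in\V_1$) are ``already $L^1$-bounded''. They are not --- mass conservation gives only $\pi^\epsilon_x u^\epsilon_x\leq C$ pointwise, i.e.\ $u^\epsilon_x\leq C/\epsilon$ on $\V_1$, which is no $L^1$-bound at all. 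Nor can the two duality inequalities (one bounding $a$ by $s(a|b)+b$, the other $b$ by $s(a|b)+a$) be combined into a simultaneous bound on $a$ and $b$: the constants are incompatible, and indeed $s(a|b)\leq C$ does not constrain $a,b$ jointly (take $a=b$ arbitrarily large). The paper breaks this deadlock with an external input, the FIR inequality of Lemma~\ref{l:FIR}, which bounds the time-integrated Fisher information and thereby (using irreducibility to propagate bounds from $\V_0$ to $\V_1$) yields the $L^1$-bound on $u^\epsilon_{\V_1}$ directly, before any flux bounds are needed. It also yields the sharper conclusion $\epsilon\|u^\epsilon_{\V_1}\|_{C([0,T])}\to0$, which you do not have.

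The second gap is equicontinuity of $u^\epsilon_{\V_{0\slow}}$ and $u^\epsilon_\fC$. You argue that $\dot\rho^\epsilon_x$ is $L^1$-bounded and conclude via Arzel\`a--Ascoli; but an $L^1$-bound on the derivative gives only a BV bound, which is compatible with jump discontinuities in the limit and does not give uniform convergence. The Orlicz bound on $j^\epsilon_{\Rslow}$ does give a modulus of continuity for the slow contribution, but the damped fluxes entering $\V_0$-nodes are only $L^1$-bounded and could, a priori, concentrate. The paper rules this out by estimate~\eqref{est:weak-AA} in Lemma~\ref{l:bdd-damped-fluxes}, obtained by summing the mild $\V_1$-continuity equations and using both the Orlicz modulus~\eqref{est:jslow} for slow inflow into $\V_1$ and the vanishing $\epsilon\|u^\epsilon_{\V_1}\|_{C([0,T])}\to0$ --- again a consequence of the FIR inequality. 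Without this you cannot conclude uniform convergence on the $C$-factors (the paper then applies an \emph{asymptotic} Arzel\`a--Ascoli theorem since the modulus is only valid in the limit $\epsilon\to0$). The FIR inequality is thus the missing ingredient on which both gaps hinge; inserting it (and the resulting estimate~\eqref{est:weak-AA}) would repair your proposal and bring it essentially in line with the paper's proof.
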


This equicoercivity identifies a topology that is generated by the sequence of functionals itself, and therefore natural for the $\Gamma$-convergence. Note that the topologies for $u_{\V_0}$ and $u_\fC$ are much stronger than the other ones. This will be needed to interchange limits $\lim_{\e\to0} \lim_{t\downarrow 0} u^\e_{\V_{0\slow}}(t)$ and $\lim_{t\downarrow 0} \lim_{\e\to0} u^\e_{\V_{0\slow}}(t)$ in order to converge in the continuity equation later on. By contrast, such strong compactness is not to be expected for $u^\epsilon_{\V_1}$, nor is it needed, since the $u^\epsilon_{\V_1}(0)$ will not play a role in the limit due to instantaneous equilibration.

Our second main result is the $\Gamma$-convergence:
\begin{theorem} In the topological space $\Theta$:
\begin{equation*}
  \tilde\I^\epsilon_0 + \tilde\J^\epsilon_\slow + \tilde\J^\epsilon_\fcycle + \tilde\J^\epsilon_\damped =:\tilde\I^\epsilon_0 + \tilde\J^\epsilon\xrightarrow[\epsilon\to0]{\Gamma} \tilde\I^0_0 + \tilde\J^0 := \tilde\I^0_0 + \tilde\J^0_\slow + \tilde\J^0_\fcycle + \tilde\J^0_\damped,
\end{equation*}
where, setting $u_{r_-}:=u_\fc$ for any $r_-\in\fc$,
\begin{align*}
  \tilde\I_0^0\big(u(0)\big) 
  &:=\sum_{x\in\V_{0\slow}} s\big(\pi_x u_x(0) \mid \pi_x \big) + \sum_{\fc\in\fC} s\big(\pi_\fc u_\fc(0) \mid \pi_\fc \big),\\
  \tilde\J^0_\slow(u_{\V_{0\slow}},u_{\V_{0\fcycle}},j_\Rslow) &:= \sum_{r\in\Rslow} \int_{[0,T]}\!s\big(j_r(t)\mid \kappa_r \pi_{r_-}\!u_{r_-}\!(t)\big)\,dt,\\
  \tilde\J^0_\damped(u_{\V_1},j_\Rdamped) &:= \sum_{r\in\Rdamped} \int_{[0,T]}\!s\big(j_r\mid \kappa_r \tilde\pi_{r_-}\!u_{r_-}\!\big)\,(dt),\\
  \tilde\J^0_\fcycle(u_{\V_{0\fcycle}},\tilde\jmath_\Rfcycle) &:=
   \displaystyle \mfrac12\sum_{r\in\Rfcycle} \, \int_{[0,T]}\!\mfrac{\tilde\jmath_r(t)^2}{\kappa_r\pi_{r_-}\!u_{r_-}\!(t)}\,dt,
\end{align*}
and we set $\tilde\J^0=\infty$ if the limit continuity equations~\eqref{eq:limit cont eq} are violated.
\label{th:main result}
\end{theorem}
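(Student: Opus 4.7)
My plan is to prove $\Gamma$-convergence by establishing the $\Gamma$-liminf and recovery-sequence inequalities separately, exploiting the additivity of $\tilde\J^\epsilon$ over edge classes so that each of the three pieces $\tilde\J^\e_\slow$, $\tilde\J^\e_\damped$, $\tilde\J^\e_\fcycle$ can be treated in isolation, and an extra term-by-term argument handles $\tilde\I^\e_0$.

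For the \emph{liminf}, I would use the Legendre--Fenchel duality $s(a\mid b) = \sup_{\lambda\in\RR}\{\lambda a - b(e^\lambda-1)\}$ to rewrite each edge integral as a supremum, over continuous test functions $\lambda:[0,T]\to\RR$, of a linear pairing against $j^\e$ plus a nonlinear functional of $\pi^\e u^\e$. For $\tilde\J^\e_\slow$, the $L^\C$-weak convergence of $j^\e_{\Rslow}$ together with the uniform and weak-* convergences of $u^\e$ on $\V_{0\slow}$ and $\V_{0\fcycle}$ let one pass to the liminf with $\lambda$ fixed; taking the supremum over $\lambda$ at the end recovers $\tilde\J^0_\slow$. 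The damped piece is handled identically, now with narrow convergence of measures on $[0,T]$ and the rescaling $\pi^\e_{r_-}/\e\to\tilde\pi_{r_-}$. For $\tilde\J^\e_\fcycle$ I would plug in the rescaled test function $\lambda = \sqrt\e\,\mu$ and Taylor-expand $e^{\sqrt\e\mu}-1 = \sqrt\e\,\mu + \tfrac12\e\mu^2 + \bigoh(\e^{3/2})$; after the $\bigoh(1)$ terms arising from the $1/\e$ parts of $a$ and $b$ cancel, the surviving limit is the quadratic form $\int_0^T\!(\mu\,\tilde\jmath_r - \tfrac12\mu^2\kappa_r\pi_{r_-}u_{r_-})\,dt$, whose Legendre transform in $\mu$ is precisely $\tfrac12\int\! \tilde\jmath_r^2/(\kappa_r\pi_{r_-}u_{r_-})\,dt$. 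For $\tilde\I^\e_0$, continuity disposes of the $\V_{0\slow}$ terms, Jensen's inequality applied to $s(\cdot\mid 1)$ together with~\eqref{eq:sum density eps} contracts the $\V_{0\fcycle}$ terms into $\sum_{\fc\in\fC}\pi_\fc\,s(u_\fc(0)\mid 1)$, and the $\V_1$ terms are discarded by positivity.

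For the \emph{limsup}, given $(u,j)\in\Theta$ with $\tilde\I^0_0(u(0))+\tilde\J^0(u,j)<\infty$, I would construct a recovery sequence by the most natural choice: set $u^\e_x := u_x$ on $\V_{0\slow}$; on each $\fc\in\fC$ set $u^\e_x := u_\fc$ for every $x\in\fc$, which makes~\eqref{eq:sum density eps} automatic via~\eqref{eq:pic}; copy the limit slow and damped fluxes (mollifying the latter to promote measures to $L^1$ functions if needed); and copy the limit rescaled fast-cycle fluxes $\tilde\jmath_{\Rfcycle}$. The $\V_1$-concentrations and $\rho^\e$ on $\V_1$ are then defined by integrating $\dot\rho^\e=-\div j^\e$ in the $\V_1$-subsystem from the initial datum $\rho^\e_x(0)=\pi^\e_x$, which kills the $\V_1$-contribution to $\tilde\I^\e_0$. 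The slow and damped contributions converge by dominated convergence (finiteness of $\tilde\J^0$ forces $j_r=0$ wherever the reference mass vanishes), and $\tilde\J^\e_\fcycle$ converges via the same Taylor expansion as in the liminf, used now as an upper bound.

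The \emph{main obstacle} is that the naive copying above is incompatible with the $\e$-level continuity equation at each individual node $x\in\V_{0\fcycle}$: although the leading $\bigoh(1/\e)$ fast-cycle contribution is stationary in $\pi^\e$, there is an $\bigoh(1)$ residual from slow and damped in/outflows and from lower-order corrections in $\pi^\e$. One must therefore add a corrector $\delta\tilde\jmath^\e_{\Rfcycle}$ whose discrete divergence on each $\fc$ absorbs exactly this residual. Since each $\fc$ is a union of directed cycles in $(\V_{0\fcycle},\Rfcycle)$ by Lemma~\ref{lem:fast cycles}, this inhomogeneous divergence equation is always solvable on $\fc$. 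The technical heart of the argument is to control $\delta\tilde\jmath^\e_{\Rfcycle}$ sharply enough that it simultaneously (i) tends to zero weakly in $L^\C([0,T];\RR^{\Rfcycle})$, so that the prescribed limit $\tilde\jmath_{\Rfcycle}$ is preserved, and (ii) contributes only $o(1)$ to $\tilde\J^\e_\fcycle$. Once this is in hand, the remaining calculations are routine.
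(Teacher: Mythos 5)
Your $\Gamma$-liminf strategy is essentially the paper's: dual representations of $s(\cdot\mid\cdot)$ (Lemma~\ref{lem:S dual}), the substitution $\zeta\mapsto\sqrt\e\,\zeta$ with the pointwise limit $\tfrac1\e e^{\sqrt\e\zeta}-\tfrac1\e-\tfrac1{\sqrt\e}\zeta\to\tfrac12\zeta^2$ for the fast-cycle piece, and Jensen together with~\eqref{eq:cont eq sum density eps} for the $\V_{0\fcycle}$ part of $\tilde\I_0^\e$. That part is sound.

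The recovery-sequence construction has two genuine gaps.

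First, your treatment of $\V_1$ does not recover the prescribed $u_{\V_1}$. You copy the slow and damped fluxes $j^\e=j$ and then integrate $\dot\rho^\e=-\div j^\e$ on the $\V_1$-subsystem from $\rho^\e_x(0)=\pi^\e_x$. But the finiteness of $\tilde\J^0$ forces the limit fluxes to satisfy~\eqref{eq:limit cont eq V1}, i.e.\ $-\div j|_{\V_1}\equiv 0$, so your construction gives $\rho^\e_x(t)\equiv\pi^\e_x$ and hence $u^\e_x\equiv1$ for all $x\in \V_1$. This converges narrowly to the constant $1$, not to the prescribed $u_{\V_1}$ (which is an arbitrary nonnegative measure), and the energy $\tilde\J^\e_\damped$ converges to $\sum_r\int s(j_r\mid\kappa_r\tilde\pi_{r_-})$ rather than $\tilde\J^0_\damped(u_{\V_1},j_\Rdamped)$. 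The paper resolves this precisely by \emph{not} leaving the slow/damped fluxes untouched: it picks a node $\hat x\in\V_{0\slow}$ (whose density is bounded below after the regularization step) and routes an additional flux $\pi^\e_y\dot u_y$ along a chain $Q(\hat x,y)$ to each $y\in\V_1$, which makes $\pi^\e_y u^\e_y(t)=\pi^\e_y u_y(t)$ hold exactly.

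Second, your fast-cycle corrector $\delta\tilde\jmath^\e_{\Rfcycle}$ has a solvability obstruction. A flux supported on $\Rfcycle$ has zero net divergence on each $\fc$ (fast-cycle edges do not leave $\fc$), so the residual you want to absorb must sum to zero over $\fc$. But with $u^\e_x:=u_\fc$ and $j^\e=j$ on slow/damped edges, the sum of the residuals over $\fc$ is (to leading order) $(\pi^\e_\fc-\pi_\fc)\dot u_\fc$, which is generically nonzero for $\e>0$. Being $o(1)$ is not enough — the divergence equation simply has no solution. The paper sidesteps this too: it prescribes $u^\e_x(0)=\tfrac{\pi^\e_x}{\pi_x}u_x(0)$ on $\V_{0\slow}$ (so that \emph{concentrations}, not densities, track the limit), defines $u^\e_x$ for $x\in\V_{0\fcycle}$ as the solution of the $\e$-level continuity equation~\eqref{eq:cont eq V0f} rather than prescribing it, and then proves $u^\e_x\to u_\fc$ uniformly on $[0,T]$ via the Gronwall argument of Lemma~\ref{lem:strengthened equilibration}. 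You also understate the regularization required (convolution, adding constants to densities, and adding constant fluxes around a spanning cycle so that property~\ref{it:added uj} of Lemma~\ref{lem:regularise uj} holds), which is what makes the paper's mass-routing fluxes nonnegative for small~$\e$.
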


The explicit form~\eqref{eq:limit cont eq} of the limit continuity equations will be derived in Lemma~\ref{lem:limit cont eq}, after the required notions are introduced and the required results about the network and continuity equations are proven. In our third main result, explained in the next section, we show that both the densities $u_{\V_1}$ and the damped fluxes $j_\Rdamped$ may become measure-valued in time; therefore we use a slight generalisation of the function $s$ to measure-valued trajectories, i.e.:
\begin{equation}
  \int_{[0,T]}\!s\big(j_r\mid \kappa_r \tilde\pi_{r_-}\!u_{r_-}\!\big)\,(dt) :=
  \begin{cases}
    \displaystyle \int_{[0,T]}\!s\Big( \mfrac{dj_r}{\kappa_r \tilde\pi_{r_-}du_{r_-}}(t) \mid 1 \Big) \kappa_r \tilde\pi_{r_-}\!u_{r_-}\!(dt),  &\text{if } j_r \ll u_{r_-},\\
    \infty, &\text{otherwise}.
  \end{cases}
\label{eq:measure-valued s}
\end{equation}

Comparing Theorem~\ref{th:main result} with Figure~\ref{fig:network}, we see that the limit functional contains additional information about the $\V_1$ nodes that contract to a single node in the limit, and about all slow, fast cycle and damped fluxes. Due to this additional information, the proof of the $\Gamma$-convergence is relatively straightforward, e.g. without the need of unfolding techniques. This illustrates our `philosophical' message that the mathematics becomes easier if one takes fluxes into account, which was also observed in~\cite{PattersonRenger2019} where the large-deviation principle~\eqref{eq:flux RF} was proven.

\subsection{Main result: the development of spikes}
\label{subsec:spikes}


The equicoercivity of $u^\epsilon_{\V_1}$ and $j^\epsilon_\damped$ will be derived by uniform $L^1$-bounds in Lemmas~\ref{l:bdd-densities} and~\ref{l:bdd-damped-fluxes}. From these bounds one can only extract compactness as measures, in the narrow sense, so that $u^\epsilon_{\V_1}$ and $j^\epsilon_\damped$ may develop measure-valued singularities or \emph{spikes} in time. 

For the densities $u_{\V_1}^\e$, such spikes can not be ruled out, regardless of the network structure. This is easy to see from the fact that these densities become fully uncoupled in the limit continuity equation~\eqref{eq:limit cont eq V1}. From~\eqref{eq:measure-valued s} one sees that one may choose  large $u_r$ for $r\in\V_1$, provided  $j_r \ll u_{r_-}$.

For the fluxes  $j^\epsilon_\damped$, the occurrence of spikes is  related to the presence of \emph{damped cycles}, i.e.\ cycles of damped reactions. The example of Figures~\ref{fig:network} and \ref{fig:network2} has no such damped cycles, but Figure~\ref{fig:damped fluxes} illustrates the concept. 

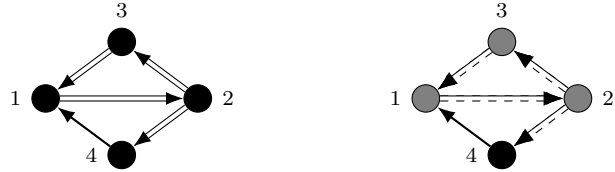
\begin{figure}[h!]
  \centering
  \begin{tikzpicture}[scale=0.5,baseline]
    \tikzstyle{every node}=[font=\scriptsize]

    \node[label=left:$1$](x1) at (0,0) [circle,draw,fill]{};
    \node[label=right:$2$](x2) at (4,0) [circle,draw,fill]{};
    \node[label=above:$3$](x3) at (2,1.5) [circle,draw,fill]{}; 
    \node[label=left:$4$](x4) at (2,-1.5) [circle,draw,fill]{}; 

    \draw[-{Latex},double distance=1.5] (x1)-- (x2);
    \draw[-{Latex},double distance=1.5] (x2)-- (x3); 
    \draw[-{Latex},double distance=1.5] (x3)-- (x1);
    \draw[-{Latex},double distance=1.5] (x2)-- (x4);
    \draw[-{Latex},thick] (x4)-- (x1);

    \node[label=left:$1$](x5) at (10,0) [circle,draw,fill=gray]{};
    \node[label=right:$2$](x6) at (14,0) [circle,draw,fill=gray]{};
    \node[label=above:$3$](x7) at (12,1.5) [circle,draw,fill=gray]{}; 
    \node[label=left:$4$](x8) at (12,-1.5) [circle,draw,fill]{}; 

    \draw[-{Latex},double distance=1.5,dashed] (x5) to (x6);
      \begin{scope}
        \path[clip] (x6) -- (x5) -- (x5.center) -- (x6.center)-- (x6);
        \draw[-{Latex},double distance=1.5] (x5) to (x6);
      \end{scope}
      \draw[{-{Latex}[black]},white,line width=1pt] (x5) to (x6);
    \draw[-{Latex},double distance=1.5,dashed] (x6) to (x7);
      \begin{scope}
        \path[clip] (x7) -- (x6) -- (x6.center) -- (x7.center)-- (x7);
        \draw[-{Latex},double distance=1.5] (x6) to (x7);
      \end{scope}
      \draw[{-{Latex}[black]},white,line width=1pt] (x6) to (x7);
    \draw[-{Latex},double distance=1.5,dashed] (x7) to (x5);
      \begin{scope}
        \path[clip] (x7) -- (x5) -- (x5.center) -- (x7.center)-- (x7);
        \draw[-{Latex},double distance=1.5] (x7) to (x5);
      \end{scope}
      \draw[{-{Latex}[black]},white,line width=1pt] (x7) to (x5);
    \draw[-{Latex},double distance=1.5,dashed] (x6) to (x8);
      \begin{scope}
        \path[clip] (x6) -- (x8) -- (x8.center) -- (x6.center)-- (x6);
        \draw[-{Latex},double distance=1.5] (x6) to (x8);
      \end{scope}
      \draw[{-{Latex}[black]},white,line width=1pt] (x6) to (x8);
    \draw[-{Latex},thick]               (x8)-- (x5);
  \end{tikzpicture}
\caption{An example of a network with a cycle of damped fluxes. On the left the network with the distinction between fast and slow edges; on the right the redrawn network with the node and flux classification as in Figure~\ref{fig:network2}.}
\label{fig:damped fluxes}
\end{figure}

To study this we further subdivide $\Rdamped$ into damped cycles and the rest, $\Rdamped=\Rdcyc\cup\Rdnocyc$, where
\begin{align*}
  \Rdcyc:=\big\{r^0\in\Rdamped: \exists (r^k)_{k=1}^K\subset\Rdamped, r^k_+=r^{k-1}_-, r^K_+=r^0_-\big\},
  &&
  \Rdnocyc:=\Rdamped\backslash\Rdcyc.
\end{align*}
The relation between damped cycles and spikes in the damped fluxes is summarised in our third main result:
\begin{theorem}\quad
\vspace{-0.2cm}
\begin{enumerate}[(i)]
\item For any sequence $(u^\e,j^\e)_{\e>0}\subset\Theta$ such that $\tilde\I^\e_0\big(u^\e(0)\big) + \tilde\J^\e(u^\e,j^\e)\leq C$ for some $C>0$ and
 $(u^\e,j^\e)\xrightarrow{\Theta}(u,j)$, we have  $j_\Rdnocyc \in L^\C([0,T];\RR^\Rdnocyc)$.
\item If $\Rdcyc \not=\emptyset$ then there exists a sequence $(u^\e,j^\e)_{\e>0}\subset\Theta$ with
$\tilde\I^\e_0\big(u^\e(0)\big) + \tilde\J^\e(u^\e,j^\e)\leq C$ for some $C>0$
and $(u^\e,j^\e)\xrightarrow{\Theta}(u,j)$ such that
\begin{equation*}
  j_\Rdcyc\in\M([0,T];\RR^{\Rdcyc})\backslash L^1([0,T];\RR^\Rdcyc).
\end{equation*}
\end{enumerate}
\label{th:spikes}
\end{theorem}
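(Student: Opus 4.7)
For part (i), the plan is to pass to the limit in the continuity equations at $\V_1$-nodes and then exploit the DAG structure of the damped-non-cyclic subgraph. Theorem~\ref{th:equicoercivity} yields a $\Theta$-convergent subsequence. Since $\pi^\epsilon_x\to 0$ for $x\in\V_1$ and $u^\epsilon_x$ is uniformly bounded in $\M$, the masses $\rho^\epsilon_x=\pi^\epsilon_x u^\epsilon_x$ tend to zero in total variation; passing to the limit in $\dot\rho^\epsilon=-\div j^\epsilon$ therefore gives, for each $x\in\V_1$, the identity
\[
  \sum_{r_-=x} j_r \;=\; \sum_{r_+=x} j_r
\]
as nonnegative measures on $(0,T)$. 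Now decompose $(\V_1,\Rdamped)$ into strongly connected components. By the definitions of $\Rdcyc$ and $\Rdnocyc$, every $\Rdcyc$-edge lies inside an SCC while every $\Rdnocyc$-edge connects distinct SCCs, so the condensation is a DAG with topological ordering $\mathcal{S}_1,\dots,\mathcal{S}_N$. Summing the balance over $x\in\mathcal{S}_n$ cancels the internal $\Rdcyc$-fluxes and yields
\[
  \sum_{\substack{r\in\Rdnocyc\\ r_-\in\mathcal{S}_n}} j_r
  \;=\; \sum_{\substack{r\in\Rslow\\ r_+\in\mathcal{S}_n}} j_r
  \;+\; \sum_{\substack{r\in\Rdnocyc\\ r_+\in\mathcal{S}_n,\, r_-\notin\mathcal{S}_n}} j_r,
\]
where the no-leak assumption eliminates slow edges leaving $\mathcal{S}_n$. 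An induction on $n$ finishes the proof: slow fluxes lie in $L^\C$ by the definition of $\Theta$, and the second sum on the right involves $\Rdnocyc$-fluxes from earlier SCCs which lie in $L^\C$ by induction, so the left-hand side is in $L^\C$; each summand there is a nonnegative measure dominated by the total, and monotonicity of the Orlicz norm gives $j_r\in L^\C$ for every $r\in\Rdnocyc$.

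For part (ii), I construct an explicit sequence with a Dirac spike. Fix a damped cycle $(r^0,\dots,r^{K-1})\subset\Rdcyc$ contained in an SCC $\mathcal{S}\subset\V_1$, and a time $t_0\in(0,T)$. Outside a small window $[t_0,t_0+2\sqrt{\epsilon}]$, run $(u^\epsilon,j^\epsilon)$ at equilibrium ($u^\epsilon\equiv 1$, $j^\epsilon_r=\kappa_r^\epsilon\pi_{r_-}^\epsilon$); inside the window, ramp $u^\epsilon_x$ linearly from $1$ up to $1/\sqrt{\epsilon}$ on $[t_0,t_0+\sqrt{\epsilon}]$ and back down to $1$ on $[t_0+\sqrt{\epsilon},t_0+2\sqrt{\epsilon}]$, simultaneously for all $x\in\mathcal{S}$. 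Set the cycle fluxes equal to the reference $\kappa_{r^k}\tilde\pi_{r^k_-}u^\epsilon_{r^k_-}$ plus order-$O(1)$ corrections dictated by mass balance, suppress the damped-leak fluxes $r\in\Rdnocyc$ exiting $\mathcal{S}$ to a bounded value during the bump so that the added mass stays inside $\mathcal{S}$, and modulate the slow inflows by $O(1)$ to provide the net mass change. The rate functional is then controlled by: (a) $\tilde\I^\epsilon_0=0$ since the trajectory starts at equilibrium; (b) $O(1)$-perturbations of the slow fluxes over a time interval of length $\sqrt{\epsilon}$ contribute $O(\sqrt{\epsilon})$; (c) cycle-flux corrections of size $O(1)$ against the reference $b^\epsilon\sim 1/\sqrt{\epsilon}$ give a cost density $\sim\Delta^2/b^\epsilon=O(\sqrt{\epsilon})$, integrating to $O(\epsilon)$; (d) suppressing a damped leak to a bounded value against reference $\kappa\tilde\pi u^\epsilon_{r_-}\sim 1/\sqrt{\epsilon}$ costs $\int\kappa\tilde\pi u^\epsilon_{r_-}\,dt=O(\int u^\epsilon\,dt)=O(1)$. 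Thus $\tilde\I^\epsilon_0+\tilde\J^\epsilon$ stays uniformly bounded, while the cycle flux $j^\epsilon_{r^k}\sim 1/\sqrt{\epsilon}$ over a window of width $\sqrt{\epsilon}$ narrowly converges to a limit measure with a Dirac atom at $t_0$, giving $j_\Rdcyc\in\M\setminus L^1$.

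The main technical obstacle is in part (ii): apportioning the required mass imbalance among the slow inflows, cycle-flux corrections and leak modulations so that every flux stays nonnegative throughout both ramp phases, without inflating the cost. The cycle structure of $\Rdcyc$ is essential, because it allows the bumped mass to be recirculated within $\mathcal{S}$ rather than forced out through $\Rdnocyc$-edges, whose narrow limits are compelled by part~(i) to stay in $L^\C$; this dovetailing of the two parts is what gives the theorem its sharp form.
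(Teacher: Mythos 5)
Your proof of part (i) takes a genuinely different and, in my view, cleaner route than the paper's. The paper fixes an edge $r^0\in\Rdnocyc$ and recursively substitutes the node balance at $r^0_-$, then at the sources of the incoming damped edges, and so on, claiming the recursion terminates because ``any damped flux that already appeared\ldots{} can not reappear''. When $\Rdcyc\neq\emptyset$ this termination claim is delicate: a damped cycle lying in the backward-reachable set of $r^0_-$ (but not through $r^0$ itself) causes the naive recursion to revisit edges indefinitely, and making the argument rigorous really requires exploiting cancellations in the \emph{equalities} rather than only the domination inequalities. Your strongly-connected-component decomposition sidesteps this entirely: summing the balance $\sum_{r_-=x}j_r=\sum_{r_+=x}j_r$ over $x$ in one SCC $\mathcal S_n$ cancels the internal $\Rdcyc$-fluxes exactly (each appears once with each sign), leaving a clean identity between outgoing $\Rdnocyc$-fluxes and incoming slow and $\Rdnocyc$-fluxes from strictly earlier SCCs in the topological order, after which a finite induction and monotonicity of the Orlicz norm for nonnegative dominated densities finish the job. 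The identification $\Rdcyc=\{$edges internal to an SCC of $(\V_1,\Rdamped)\}$, $\Rdnocyc=\{$edges between SCCs$\}$ is correct and is what makes the condensation a DAG. This is a sound alternative proof, and arguably removes the ambiguity in the paper's termination step.

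For part (ii) your construction is essentially the same idea as the paper's, executed from a different starting configuration. The paper concentrates all initial mass at a $\V_0$-node $x^0$ (so $\tilde\I_0^\e$ is bounded but nonzero) and transports it through the damped cycle with a piecewise-linear ``triangle'' flux of height $O(1/\sqrt\e)$ over a window of width $\sqrt\e$; you instead start at equilibrium and superimpose a triangular bump $u^\e_x\sim 1/\sqrt\e$ on the $\V_1$-densities within the SCC, taking the cycle fluxes to be the reference $\kappa_r\tilde\pi_{r_-}u^\e_{r_-}$ plus $O(1)$ corrections. The scaling $\sqrt\e\times 1/\sqrt\e$, the $O(\e)$ cost of the cycle-flux corrections, the $O(1)$ cost of suppressing the outgoing $\Rdnocyc$-leaks, and the $O(\sqrt\e)$ cost of modulating the slow inflows all match the paper's explicit computations. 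The one thing you flag but do not fully resolve — distributing the mass imbalance during both ramp phases so that every flux stays nonnegative — is precisely what the paper's explicit formulas ($\Delta_T^\e$, $a_k$, $b_k$, the pulses in $r^{\mathrm{in}}$ and $r^{\mathrm{out}}$) handle concretely. Your sketch is convincing and the cost accounting is right, but to make it a complete proof you would need to write down the corrections and check nonnegativity, as the paper does. Your concluding observation — that the cyclic structure is what lets the bumped mass recirculate rather than be forced out through $\Rdnocyc$-edges, which part (i) shows must remain in $L^\C$ — is a nice way to see why the dichotomy in the theorem is sharp.
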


As a consequence of Theorem~\ref{th:main result} and Theorem~\ref{th:spikes}(ii), if $\Rdcyc\neq\emptyset$ then there is a $(u,j)\in\Theta$ with $j_\Rdcyc\in\M([0,T];\RR^{\Rdcyc})\backslash L^1([0,T];\RR^\Rdcyc)$ for which
$
  \tilde\I^0_0\big(u(0)\big) + \tilde\J^0(u,j)\leq \infty
$.

\subsection{Related literature}
\label{subsec:GF}

As mentioned in the introduction, this work is related to classical quasi-steady state approximation theory; see e.g.~\cite[Sec.~4.2]{HeinrichSchuster96} or~\cite[Sec.~3.1]{Kuehn15}. We mention two recent works~\cite{DisserLieroZinsl2018, MielkeStephan2019TR} that study fast-reaction limits in connection with another underlying structure, namely a gradient structure. A gradient structure consists of an energy $\tfrac12\I_0^\epsilon(c)$ and a non-negative convex dissipation potential $\Psi^\epsilon_c(\xi)$ such that the evolution equation~\eqref{eq:ODE} can be rewritten as $\Psi^\epsilon_{c(t)}(\dot c(t)) = -D\tfrac12\I^\epsilon_0(c(t))$. 
Both studies work on the level of concentrations rather than fluxes, under the assumption that the $\epsilon$-dependent evolution equation~\eqref{eq:ODE} satisfies detailed balance, and under the assumption that damped fluxes do not occur. The detailed balance condition is needed for the $\epsilon$-dependent equation to have a gradient structure, and the absence of damped fluxes guarantees that the gradient structure is not destroyed in the limit.

Disser, Liero, and Zinsl~\cite{DisserLieroZinsl2018} study general, possibly non-linear reaction networks with mass-action kinetics. Under the detailed balance assumption such equations have a gradient structure with quadratic dissipation potential, as discovered in \cite{Maas2011, Mielke2012a}. The authors show the convergence of that gradient structure by the notion of E-convergence as defined in~\cite{Mielke2016}. In order to do so they assume linearly independent stoichiometric coefficients, which can be seen as a decoupling or orthogonality between the slow and the fast reactions. In this paper we do not need such an assumption because the flux setting automatically decouples the reactions.

Mielke and Stephan~\cite{MielkeStephan2019TR} study the linear setting, similarly to the current paper. Contrary to Disser \emph{et al.}, they use the gradient structure that is related to the large-deviation principle~\eqref{eq:concentration RF} in the sense of \cite{MielkePeletierRenger2014}, again under the detailed balance assumption. They prove the convergence of that gradient structure, using the stronger notion of tilted EDP-convergence; see~\cite{Mielke2012a,LieroMielkePeletierRenger2017,MielkeMontefuscoPeletier20TR}. This result implies convergence of the large-deviation rate functions~$\I^\epsilon$, under the more restrictive assumptions mentioned above, but also for a wide range of tilted energies simultaneously. In a paper that is soon to appear, Mielke, Peletier, and Stephan generalise this to the case of nonlinear systems, modelled on the class of chemical reactions with mass-action kinetics that satisfy the detailed balance condition.


\subsection{Overview}

Section~\ref{sec:preliminaries} contains preliminaries that are needed throughout the paper. In Section~\ref{sec:network}, we study properties of the network, the continuity equations, and their limits, and we derive equicoercivity in $\Theta$. In Section~\ref{sec:Gamma convergence} we prove our main $\Gamma$-convergence result, Theorem~\ref{th:main result}. In Section~\ref{sec:spikes} we prove the relation between spikes and damped cycles, Theorem~\ref{th:spikes}. Finally, in Section~\ref{sec:density ldp and effective dynamics} we derive implications  for $\Gamma$-convergence of the density large deviations, and for convergence of solutions to the effective dynamics.

\section{Preliminaries}
\label{sec:preliminaries}

We first provide a list of basic  facts that will be used throughout the paper. After this we introduce the Orlicz space $L^\C$. Next we recall a FIR inequality that bounds the free energy and Fisher information by the rate functional which will be needed to derive compactness of densities later on. Finally, we state a number of convex dual formulations of a number of relevant functionals.

\subsection{Basic properties}

We will use the following properties of the functions $s(\cdot|\cdot)$ and $\C$. For any $a,b\geq0$ and $p\in\RR$, we have:
\begin{align}
   s(a\mid b) &:= a\log \frac ab-a+b \notag\\
      &=(1-\alpha) b + a\log \alpha + \sum_{n=2}^\infty \mfrac{1}{n(n-1)} b\alpha\Big(\mfrac{b\alpha-a}{b\alpha}\Big)^n 
      \geq (1-\alpha) b + a\log \alpha \quad\forall\alpha>0, \label{eq:s linearisation}\\
  s(a\mid b) &\leq \frac{\,a^2}{b} -2a +b \qquad\text{(using } \log x\leq x-1), \label{eq:s bdd quadr}\\
  \C(a)&:=\inf_{p-q=a} s(p \mid 1) + s(q \mid 1) \notag\\
      &= s\big(\tfrac12 a + \sqrt{1+a^2/4} \mid 1\big) + s\big(-\tfrac12 a + \sqrt{1+a^2/4} \mid 1\big)\notag\\
      &=\int_0^a\!\sinh^{-1}(\hat a/2)\,d\hat a =2\big(\cosh^*(a/2)+1\big),\notag\\
  \C^*(p) &:= \sup_{a\in\RR} p a-\C(a)\label{def:C*}\\
    &=2\big(\cosh(p)-1\big), \label{eq:dual form of C}\\
  s(a \mid b) &= b\big(s(a/b \mid 1) + s(1\mid 1)\big) \geq b\,\C(\tfrac{a-b}b) \label{eq:C bdd by s},\\
  \C(\delta p) &= \delta\int_0^p\sinh^{-1}(\delta q/2)\,dq \stackrel{\text{(concave)}}{\geq} \delta^2 \int_0^p\sinh^{-1}(q/2)\,dq = \delta^2\C(p) \qquad\forall\delta\in\lbrack0,1\rbrack,\label{eq:C sqrt scaling}\\
  \C(\delta p) &\stackrel{\text{(convex)}}{\leq} \delta\C(p)\hspace{22.7em}\forall\delta\in\lbrack0,1\rbrack.\label{eq:C linear scaling}
\end{align}

\subsection{Orlicz space}
\label{subsec:Orlicz}

The functions $\C,\C^*$ defined above form a convex dual pair of N-functions (``nice Young functions''~\cite[Sec.~1.3]{RaoRen1991}). The primal function $\C$ satisfies the $\Delta_2$ property: $\C(2p) \leq 4 \C(p)$ (but $\C^*$ does not). We shall use the corresponding  Orlicz space  (see \cite[Th.~3.3.13]{RaoRen1991}):
\begin{align}
  &L^\C([0,T];\RR^\R):=\Big\{ \textstyle j:[0,T]\to\RR^\R, \, \exists a>0 \text{ such that } \sum_{r \in \R} \int_{[0,T]}\!\C\big(\tfrac1a j_r(t)\big)\,dt<\infty\Big\}, \notag\\
  &\lVert j\rVert_{L^\C} := \sup_{\substack{\zeta\in L^{\!\C^*\!\!}([0,T];\RR^\R):\\\sum_{r\in \R} \int_{[0,T]}\!\C^*(\lvert \zeta_r(t)\rvert)\,dt\leq 1}} \, \int_{[0,T]}\! \lvert j(t)\cdot \zeta(t)\rvert \,dt 
    = \inf_{a>0} \textstyle \, \mfrac1a\!\left(1+\sum_{r\in \R}\int_{[0,T]}\!\C\big(a\, j_r(t)\big)\,dt\right)\!.
\label{eq:C Orlicz norm}
\end{align}
The final characterization above implies that 
\begin{equation}
\label{eq:est-Orlicz}
a\|j\|_{L^{\C}} \leq 1 + \sum_r \int_{[0,T]} \C\bigl(a\,j_r(t)\bigr)\,dt \qquad \text{for all }a>0.
\end{equation}
We also introduce the space (see \cite[Prop.~3.4.3]{RaoRen1991})
\begin{align*}
  M^{\C^*\!\!}([0,T];\RR^\R)&:=\Big\{\textstyle \zeta:[0,T]\to\RR^\R,\,\, \forall a>0 \text{ there holds } \sum_{r\in \R}\int_{[0,T]}\!\C^*\big(\tfrac1a\zeta_r(t)\big)\,dt<\infty\Big\}\\
    &= \overline{\mathrm{span}\Big\{ \text{step functions } \zeta\in L^{\C^*\!\!}([0,T];\RR^\R) \Big\}}^{L^{\C^*}} \!\! \subsetneq L^{\C^*\!\!}([0,T];\RR^\R).
\end{align*}
Then $\big(M^{\C^*\!\!}([0,T];\RR^\R)\big)^* \simeq L^\C([0,T];\RR^\R)$ \cite[Thms~4.1.6 \&~4.1.7]{RaoRen1991}, and, since $\C$ satisfies the $\Delta_2$-property, also $\big(L^\C([0,T];\RR^\R)\big)^*\simeq(L^{\C^*\!\!}([0,T];\RR^\R)$ \cite[Cor.~4.1.9]{RaoRen1991}. In particular, the first of these isomorphisms defines the weak-\textasteriskcentered\ topology on $L^\C([0,T];\RR^\R)$.


\subsection{An FIR inequality}


There are various related notions of Fisher information for discrete systems in the literature~\cite{BobkovTetali06,Maas2017,Fathi2018}. The notion that we use is:
\begin{equation}
  \FI^\epsilon(u):=\mfrac12\sum_{r\in\R}\int_{[0,T]}\!\kappa^\epsilon_r\pi^\epsilon_{r_-} \left( \sqrt{u_{r_-}\!(t)} - \sqrt{u_{r_+}\!(t)}\,\right)^2\,dt,
\label{eq:FI}
\end{equation}
where $\kappa^\epsilon_r\pi^\epsilon_{r_-} u_{r^+}(t)=\kappa^\epsilon_r \tfrac{\pi^\epsilon_{r_-}}{\pi^\epsilon_{r_+}}\rho_{r_+}(t)$ appears as the backward jump rate for the time-reversed process. 

Recall the definitions of $\tilde\I_0$ from~\eqref{def:I0} and $\tilde \J^\e$ from~\eqref{eq:flux RF rescaled}. Using arguments from Macroscopic Fluctuation Theory, one can show the following inequality, that is sometimes known as the FIR inequality in the literature~\cite{HilderPeletierSharmaTse2019TR,KaiserJackZimmer2018,RengerZimmer2019TR}:
\begin{lemma}[FIR inequality]
\label{l:FIR}
Let $(u^\epsilon_{\V_0},u^\epsilon_{\V_1},j^\epsilon_\Rslow,j^\epsilon_\Rdamped,\tjeps_\Rfcycle)\in \Theta$ be such that $\tilde \I_0^\e(u^\e(0)) + \J^\epsilon(u^\e\pi^\epsilon ,j^\e)< \infty$.
Then    
\begin{equation}
 \sup_{0\leq t\leq T} \tfrac12\tilde\I^\epsilon_0(u^\e(t))+ \FI^\epsilon(u^\e) \leq \tfrac12\tilde\I^\epsilon_0(u^\e(0))  + \J^\epsilon(u^\e\pi^\epsilon ,j^\e).
\label{eq:FIR inequality}
\end{equation}
\end{lemma}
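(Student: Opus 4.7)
The plan is to exploit the Legendre--Fenchel dual representation
\[
s(a\mid b) = \sup_{\zeta\in\RR}\bigl[a\zeta - b(e^\zeta-1)\bigr], \qquad a,b\geq 0,
\]
with the entropy-gradient test function $\zeta_r(t) := \tfrac12\log\bigl(u_{r_+}(t)/u_{r_-}(t)\bigr)$. Pairing the pointwise inequality $a\zeta-b(e^\zeta-1)\leq s(a\mid b)$ with $a=j_r(t)$, $b=\kappa^\e_r\rho_{r_-}(t)$, summing over $r$, and integrating on $[0,\tau]$ gives
\[
\sum_r\int_0^\tau\!\bigl[j_r\zeta_r - \kappa^\e_r\rho_{r_-}(e^{\zeta_r}-1)\bigr]\,dt \;\leq\; \J^\e|_{[0,\tau]}(\rho,j) \;\leq\; \J^\e(\rho,j), \qquad \tau\in[0,T].
\]
I would then identify the left-hand side as $\tfrac12\bigl[\tilde\I^\e_0(u(\tau))-\tilde\I^\e_0(u(0))\bigr] + \FI^\e|_{[0,\tau]}(u)$ via two elementary computations.

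For the \emph{flux--entropy term}, a discrete integration by parts combined with the continuity equation $\pi^\e\dot u = -\div j$ (which follows from $\dot\rho=-\div j$ and the time-independence of $\pi^\e$) yields
\[
\sum_r j_r\bigl(\log u_{r_+}-\log u_{r_-}\bigr) = -\sum_x (\div j)_x\log u_x = \sum_x \pi^\e_x\dot u_x\log u_x = \frac{d}{dt}\tilde\I^\e_0(u(t)),
\]
where the last equality uses $\frac{d}{dt}s(\pi^\e_x u_x\mid\pi^\e_x) = \pi^\e_x\dot u_x\log u_x$. Integrating in time gives $\sum_r\int_0^\tau j_r\zeta_r\,dt = \tfrac12\bigl[\tilde\I^\e_0(u(\tau))-\tilde\I^\e_0(u(0))\bigr]$.

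For the \emph{rate--Fisher term}, $\rho_{r_-} = \pi^\e_{r_-} u_{r_-}$ gives $\kappa^\e_r\rho_{r_-}(e^{\zeta_r}-1) = \kappa^\e_r\pi^\e_{r_-}\bigl[\sqrt{u_{r_-}u_{r_+}}-u_{r_-}\bigr]$, and the algebraic identity $\sqrt{u_{r_-}u_{r_+}} - u_{r_-} = -\tfrac12(\sqrt{u_{r_-}}-\sqrt{u_{r_+}})^2 + \tfrac12(u_{r_+}-u_{r_-})$ combined with the stationarity of $\pi^\e$ — which yields $\sum_r\kappa^\e_r\pi^\e_{r_-}(u_{r_+}-u_{r_-}) = 0$ — kills the second summand, leaving
\[
\sum_r\kappa^\e_r\pi^\e_{r_-}\bigl[\sqrt{u_{r_-}u_{r_+}}-u_{r_-}\bigr] = -\tfrac12\sum_r\kappa^\e_r\pi^\e_{r_-}\bigl(\sqrt{u_{r_-}}-\sqrt{u_{r_+}}\bigr)^2,
\]
whose $\tau$-integral is $-\FI^\e|_{[0,\tau]}(u)$. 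Combining the two identifications yields, for every $\tau\in[0,T]$,
\[
\tfrac12\tilde\I^\e_0(u(\tau)) + \FI^\e|_{[0,\tau]}(u) \leq \tfrac12\tilde\I^\e_0(u(0)) + \J^\e(\rho,j),
\]
which implies the statement of the lemma (use $\tau=T$ to absorb the full Fisher information and take the supremum over $\tau$ to handle the entropy piece).

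The main technical obstacle is that $\zeta_r$ is singular wherever $u$ touches zero. One would carry out the whole computation with the regularisation $\zeta^\delta_r := \tfrac12\log\bigl((u_{r_+}+\delta)/(u_{r_-}+\delta)\bigr)$ and then let $\delta\downarrow 0$; the assumption $\J^\e(\rho,j)<\infty$ forces $j_r\equiv 0$ wherever $\rho_{r_-}\equiv 0$, which is precisely what is needed to pass to the limit in the flux term, and the Fisher integrand is non-negative so the rate term converges by monotone convergence.
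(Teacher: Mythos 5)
The paper omits the proof of Lemma~\ref{l:FIR} and instead cites the FIR-inequality literature; your strategy---test the dual representation of $s(\cdot\mid\cdot)$ with $\zeta_r=\tfrac12\log(u_{r_+}/u_{r_-})$, use the continuity equation and the stationarity of $\pi^\e$ to identify the flux--entropy and rate--Fisher terms---is precisely the standard proof in those references, and your algebraic identities (discrete integration by parts, $\sqrt{u_-u_+}-u_-=-\tfrac12(\sqrt{u_-}-\sqrt{u_+})^2+\tfrac12(u_+-u_-)$, $\sum_r\kappa^\e_r\pi^\e_{r_-}(u_{r_+}-u_{r_-})=0$) are all correct.

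However, your concluding step does not deliver the inequality as written in the lemma. Integrating the pointwise inequality $\tfrac12\tfrac{d}{dt}\tilde\I^\e_0(u(t))+\phi(t)\leq\psi(t)$ (with $\phi,\psi\geq 0$ the Fisher and rate integrands) from $0$ to $\tau$ yields, for each $\tau\in[0,T]$,
\begin{equation*}
\tfrac12\tilde\I^\e_0(u(\tau))+\FI^\e|_{[0,\tau]}(u)\;\leq\;\tfrac12\tilde\I^\e_0(u(0))+\J^\e(\rho,j),
\end{equation*}
i.e.\ $\sup_\tau\bigl[\tfrac12\tilde\I^\e_0(u(\tau))+\FI^\e|_{[0,\tau]}(u)\bigr]\leq\text{RHS}$. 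What the lemma asserts is $\bigl[\sup_\tau\tfrac12\tilde\I^\e_0(u(\tau))\bigr]+\FI^\e|_{[0,T]}(u)\leq\text{RHS}$, which is strictly stronger when the entropy maximiser $t^*$ is interior: at $\tau=t^*$ one only controls $\FI^\e|_{[0,t^*]}$, and the missing piece $\FI^\e|_{[t^*,T]}\leq\J^\e|_{[t^*,T]}$ does not follow---the integral over $[t^*,T]$ carries the term $\tfrac12[\tilde\I^\e_0(u(T))-\tilde\I^\e_0(u(t^*))]\leq 0$ which works against you. Your phrase ``use $\tau=T$ to absorb the full Fisher information and take the supremum over $\tau$ to handle the entropy piece'' therefore does not close the argument: you cannot take $\tau=T$ for one summand and $\sup_\tau$ for the other without adding the two inequalities, which gives a worse right-hand side. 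The statement you \emph{do} prove is exactly what is needed for the paper's two uses of the lemma (a $t$-uniform bound on $\tilde\I^\e_0(u(t))$ from each fixed $\tau$, and a bound on the full Fisher information from $\tau=T$), so this is likely a small imprecision in the lemma's phrasing rather than a missing idea, but you should either state your result in the sup-inside-the-bracket form or explain why the stated form holds.

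Two further points on rigour. First, the chain rule $\tfrac{d}{dt}s(\pi^\e_x u_x\mid\pi^\e_x)=\pi^\e_x\dot u_x\log u_x$ and the subsequent integration require $u_x\log u_x$ to be absolutely continuous in time; this is not automatic just from $u\in W^{1,1}$ and needs the regularisation you mention, plus an argument that the resulting error terms vanish. Your observation that $\J^\e(\rho,j)<\infty$ forces $j_r=0$ wherever $\rho_{r_-}=0$ handles the flux--entropy term, but for the rate--Fisher term you also need that $\kappa^\e_r\rho_{r_-}(e^{\zeta^\delta_r}-1)$ converges appropriately, and that the limit $\delta\downarrow0$ commutes with the $\tau$-integration; this deserves to be spelled out, since it is the only genuinely technical part of the proof.
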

The proof is a simple rewriting of the results of \cite{HilderPeletierSharmaTse2019TR}, \cite[Cor.~4]{KaiserJackZimmer2018} and \cite{RengerZimmer2019TR}, and we omit it. From the boundedness of $\tilde\I^\epsilon_0(u^\e(0)) + \J^\epsilon(u^\e\pi^\epsilon ,j^\e)$ assumed above, the inequality~\eqref{eq:FIR inequality} implies boundedness of both $\tilde\I^\epsilon_0(u^\e(T))$ and $\FI^\epsilon(u^\e\pi^\epsilon )$; this will be important in deducing compactness for the densities $u^\e_{\V_1}$.

\subsection{Dual formulations}
\label{subsec:dual formulations}

We recall convex dual formulations for the entropic and quadratic functionals and the Fisher information.

\begin{lemma}[{\cite[Prop.~3.5]{PattersonRenger2019},\cite[Lemma~9.4.4]{Ambrosio2008}}] If $u\in L^1([0,T])$,
\begin{equation*}
  \sup_{\zeta\in C([0,T])} \int_{[0,T]}\!\big\lbrack\zeta(t)j(t) - u(t)(e^{\zeta(t)}-1)\big\rbrack\,dt =
    \begin{cases}
      \int_{[0,T]} s(j(t) \mid u(t))\,dt, &j\in L^1([0,T]), j\ll u,\\
      \infty,                         &\text{otherwise},
    \end{cases}
\end{equation*}
and if $u\in\M([0,T])$,
\begin{equation*}
  \sup_{\zeta\in C([0,T])} \int_{[0,T]}\!\big\lbrack\zeta(t)j(dt) - u(dt)(e^{\zeta(t)}-1)\big\rbrack =
    \begin{cases}
      \int_{[0,T]} s(j \mid u)(dt),   &j\in\M([0,T]), j\ll u,\\
      \infty,                     &\text{otherwise}.
    \end{cases}
\end{equation*}
\label{lem:S dual}
\end{lemma}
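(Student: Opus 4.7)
The plan is to use standard convex duality and Fenchel--Young inequalities, lifted from pointwise to integral form via approximation of measurable functions by continuous ones.

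First I would establish the pointwise Legendre duality: for every $b\geq 0$ and $a\in\RR$,
\begin{equation*}
  s(a\mid b) = \sup_{\zeta\in\RR} \bigl[\zeta a - b(e^\zeta -1)\bigr].
\end{equation*}
For $a,b>0$ the supremum is attained at $\zeta^*=\log(a/b)$, giving $a\log(a/b)-a+b$; for $a=0$ the function $-b(e^\zeta-1)$ is maximized at $\zeta\to-\infty$ with value $b$; for $a>0,b=0$ sending $\zeta\to+\infty$ yields $+\infty$; for $a<0$ one sends $\zeta\to-\infty$ to get $+\infty$. These cases match the definition of $s$ in the excerpt.

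Next I would prove the upper bound ``$\leq$'' in both formulas. For any $\zeta\in C([0,T])$, the pointwise Fenchel--Young inequality gives, a.e.\ $t$,
\begin{equation*}
  \zeta(t)j(t)-u(t)(e^{\zeta(t)}-1) \leq s\bigl(j(t)\mid u(t)\bigr),
\end{equation*}
and likewise for the measure case one writes $j=\tfrac{dj}{du}u + j^{\perp}$ and applies the pointwise inequality under the Radon--Nikodym decomposition (noting that on the support of the singular part $u$ contributes zero mass, so the bound $\zeta\,dj^\perp\leq\infty$ is only relevant in the non-absolutely-continuous case where the right-hand side is $+\infty$ by convention). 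Integrating gives the bound.

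For the lower bound ``$\geq$'' in the finite case ($j\in L^1$ or $j\in\M$ with $j\ll u$), I would choose the pointwise-optimal $\zeta^*(t)=\log\!\bigl(\tfrac{dj}{du}(t)\bigr)$ on $\{u>0\}$, truncate to $\zeta^*_k:=\max(-k,\min(k,\zeta^*))$, and approximate each bounded measurable $\zeta^*_k$ by continuous functions $\zeta^{k,n}\in C([0,T])$ with the same uniform bound, using Lusin's theorem or convolution; then monotone/dominated convergence (using that $s\bigl(j\mid u\bigr)\in L^1$ bounds the integrand) passes the limit $n\to\infty$ inside, and subsequently $k\to\infty$ recovers the full integral of $s$. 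The measure case is analogous, with $u\in\M$ providing the reference measure for approximation.

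Finally, for the ``$=\infty$'' case I would exhibit a destabilising sequence. If $j\not\ll u$, pick a measurable set $A$ with $\int_A\!|j|>0$ but $\int_A u=0$ (or $u(A)=0$ in the measure case), take a continuous approximation $\zeta_n$ of $n\cdot\mathrm{sign}(j)\One_A$ via Lusin and uniform truncation against $u$, and observe that $\int\zeta_n j - \int u(e^{\zeta_n}-1)\to\infty$, since the $j$-term grows linearly in $n$ while the $u$-term remains controlled on the complement of a small set where $u$ is concentrated. If $j\in\M\setminus L^1$ (in the $u\in L^1$ case) similar sequences concentrating along the singular part produce an unbounded supremum.

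The main obstacle is the approximation step: replacing the discontinuous, possibly unbounded optimizer $\log(dj/du)$ by continuous $\zeta$'s while preserving the inequality within a controllable error. The double limit ($n\to\infty$, $k\to\infty$) together with careful use of Lusin/convolution and the domination by $s(j\mid u)\in L^1$ is what makes this routine but delicate; everything else is bookkeeping on the pointwise Legendre dual.
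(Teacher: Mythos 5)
The paper does not prove this lemma; it is quoted directly from the cited references \cite[Prop.~3.5]{PattersonRenger2019} and \cite[Lemma~9.4.4]{Ambrosio2008}, so there is no in-paper proof to compare against. That said, your outline is a correct account of the standard duality argument used in those sources. The pointwise Legendre identity $s(a\mid b)=\sup_{\zeta\in\RR}\bigl[\zeta a-b(e^\zeta-1)\bigr]$ is verified correctly in all cases (including $a=0$, $a>0,b=0$, and $a<0$), the Fenchel--Young upper bound is immediate after noting that on the singular part of $j$ the right-hand side is $+\infty$, and the lower bound via the truncated optimizer $\zeta^*_k=(\log\tfrac{dj}{du})\wedge k\vee(-k)$ with Lusin/convolution approximation and monotone/dominated convergence is exactly the classical scheme (the monotonicity in $k$ follows because $\zeta\mapsto\zeta a-b(e^\zeta-1)$ is concave with peak at $\zeta^*$, so truncating toward the peak increases the integrand). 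The one place where the wording is slightly off is the destabilising sequence for $j\not\ll u$: the geometry should be that one picks a compact $A$ carrying positive singular mass of $j$ with $u(A)=0$, then an open $U\supset A$ making $u(U)$ and $|j|(U\setminus A)$ small, and a Urysohn function $\phi$ supported in $U$, equal to $1$ on $A$; taking $\zeta=n\phi$ gives $\int\zeta\,dj\gtrsim n\,j_s^+(A)$ while $\int u(e^{\zeta}-1)\leq u(U)(e^n-1)$, which is made negligible by choosing $u(U)$ of order $e^{-2n}$. Your phrase about the $u$-term being controlled ``on the complement of a small set where $u$ is concentrated'' inverts this picture---the small set is where $\zeta$ is large, not where $u$ is concentrated---but the underlying idea is sound, and no step would actually fail once written out carefully.
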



\begin{lemma}[{\cite[Lemma~9.4.4]{Ambrosio2008}}] If $u\in L^1([0,T])$,
\begin{align*}
  \sup_{\zeta\in C([0,T])} \int_{[0,T]}\!\Big\lbrack\zeta(t)\tilde\jmath(t) - \tfrac12 u(t) \zeta(t)^2\Big\rbrack\,dt
    =
    \begin{cases}
      \tfrac12 \int_{[0,T]}\! \frac{\tilde\jmath(t)^2}{u(t)}\,dt, &\text{if }  \tilde\jmath\in L^2_{1/u}([0,T]), \tilde\jmath\ll u,\\
      \infty, &\text{otherwise}.
    \end{cases}
\end{align*}
\label{lem:square dual}
\end{lemma}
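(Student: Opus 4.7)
The plan is to use pointwise convex duality in $t$ and then handle the measurable-to-continuous approximation. For fixed $t$ with $a=\tilde\jmath(t)$ and $b=u(t)\geq 0$, the concave function $\zeta\mapsto \zeta a-\tfrac12 b\zeta^2$ has supremum $\tfrac12 a^2/b$ when $b>0$, value $0$ when $a=b=0$, and $+\infty$ when $b=0,\,a\neq 0$. The inequality ``$\leq$'' in both cases is then essentially Young's inequality: for any $\zeta\in C([0,T])$,
\[
  \zeta(t)\tilde\jmath(t)-\tfrac12 u(t)\zeta(t)^2\ \leq\ \frac{\tilde\jmath(t)^2}{2u(t)}\quad\text{on }\{u>0\},
\]
while on $\{u=0\}$, either $\tilde\jmath(t)=0$ (in which case the integrand is $0$) or the right-hand side is $+\infty$. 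Integrating yields the upper bound $\leq \tfrac12\int \tilde\jmath^2/u$ when $\tilde\jmath\ll u$, and $\leq\infty$ otherwise.

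For the matching lower bound when $\tilde\jmath\ll u$ and $\tilde\jmath\in L^2_{1/u}$, the formal maximiser is $\zeta^\star:=\tilde\jmath/u$ on $\{u>0\}$ and $0$ elsewhere, which satisfies $\zeta^\star\in L^2([0,T];\,u(t)\,dt)$ with $\int u(\zeta^\star)^2\,dt=\int \tilde\jmath^2/u\,dt$. Since $u\,dt$ is a finite Borel measure on $[0,T]$, $C([0,T])$ is dense in $L^2([0,T];\,u\,dt)$; choose $\zeta_n\in C([0,T])$ with $\|\zeta_n-\zeta^\star\|_{L^2(u\,dt)}\to 0$. Then $\int u\zeta_n^2\,dt\to \int \tilde\jmath^2/u\,dt$, and by Cauchy--Schwarz
\[
  \Bigl|\int(\zeta_n-\zeta^\star)\tilde\jmath\,dt\Bigr|\ \leq\ \|\zeta_n-\zeta^\star\|_{L^2(u\,dt)}\bigl\|\tilde\jmath/\sqrt{u}\bigr\|_{L^2}\ \to\ 0,
\]
so $\int\zeta_n\tilde\jmath\,dt\to\int\tilde\jmath^2/u\,dt$, and the two contributions combine to produce the supremum value $\tfrac12\int \tilde\jmath^2/u\,dt$.

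Two divergent cases remain. If $\tilde\jmath\ll u$ but $\int\tilde\jmath^2/u\,dt=\infty$, apply the previous paragraph to the truncations $\zeta^\star_M:=(\zeta^\star\wedge M)\vee(-M)\in L^2(u\,dt)$ and let $M\to\infty$; monotone convergence yields the value $\tfrac12\int\tilde\jmath^2/u\,dt=\infty$. If $\tilde\jmath\not\ll u$, pick a Borel set $A$ with $u(A)=0$ and $\int_A|\tilde\jmath|\,dt>0$, then for large $M$ use $\zeta=M\,\mathrm{sign}(\tilde\jmath)\mathbf 1_A$ as a target: this gives linear term $M\int_A|\tilde\jmath|\,dt$ and zero quadratic penalty. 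The main technical obstacle is exactly this last step, namely approximating $\zeta$ by continuous functions \emph{without} blowing up the quadratic penalty $\int u\zeta_n^2\,dt$, since the approximants will inevitably be large on a neighbourhood of $A$. This is handled by Lusin's theorem applied to the finite measure $u\,dt$: for any $\delta>0$ there is a closed set $K\subset A^c$ with $\int_{[0,T]\setminus K}u\,dt<\delta/M^2$ on which one can cut off $\zeta_n$ to zero continuously (via Tietze extension), making the quadratic term at most $\delta$ while preserving a linear term of order $M\int_A|\tilde\jmath|\,dt-o(1)$. Sending $M\to\infty$ then yields supremum $=\infty$, completing the proof.
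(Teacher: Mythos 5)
The paper gives no proof of this lemma; it is cited directly from Ambrosio--Gigli--Savar\'e~\cite[Lemma~9.4.4]{Ambrosio2008}, so your argument must be assessed on its own. Your overall strategy---pointwise Young's inequality for the upper bound, density of $C([0,T])$ in $L^2(u\,dt)$ for the finite-energy lower bound, truncation plus monotone convergence for the case $\tilde\jmath\ll u$ with $\int\tilde\jmath^2/u=\infty$, and Lusin for the case $\tilde\jmath\not\ll u$---is sound and is essentially the standard route. The upper bound and the finite-energy lower bound (including the Cauchy--Schwarz step) are correct as written.

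There is, however, a genuine gap in the truncation step. You write ``apply the previous paragraph to the truncations $\zeta^\star_M$,'' but the previous paragraph's convergence of the linear term rested on the bound $\bigl|\int(\zeta_n-\zeta^\star)\tilde\jmath\,dt\bigr|\leq\|\zeta_n-\zeta^\star\|_{L^2(u\,dt)}\,\|\tilde\jmath/\sqrt u\|_{L^2}$, and $\|\tilde\jmath/\sqrt u\|_{L^2}$ is precisely the quantity that is now infinite. The fix is to truncate the continuous approximants as well: take $\zeta_n\to\zeta^\star_M$ in $L^2(u\,dt)$ and replace $\zeta_n$ by $(\zeta_n\wedge M)\vee(-M)$, which is still continuous, still converges to $\zeta^\star_M$ in $L^2(u\,dt)$ (truncation is $1$-Lipschitz), and is uniformly bounded by $M$. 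Then $\int u\zeta_n^2\,dt\to\int u(\zeta^\star_M)^2\,dt$ by $L^2(u\,dt)$-convergence, while for the linear term one observes that $\tilde\jmath=0$ a.e.\ on $\{u=0\}$ (absolute continuity), extracts an a.e.-$dt$-convergent subsequence on $\{u>0\}$, and invokes dominated convergence with dominator $M|\tilde\jmath|\in L^1$; Cauchy--Schwarz is not available here. The Lusin step in the non-absolutely-continuous case has the right idea but is stated imprecisely: you must apply Lusin to $\mathrm{sign}(\tilde\jmath)\mathds{1}_A$ with respect to the \emph{joint} finite measure $(|\tilde\jmath|+u)\,dt$, not $u\,dt$ alone, so that the exceptional set simultaneously has small $u$-mass (controlling the quadratic penalty) and small $|\tilde\jmath|$-mass (so the linear term stays close to $M\int_A|\tilde\jmath|\,dt$); the latter is the claim you label ``$-o(1)$'' but do not justify. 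Taking, say, the Lusin parameter $\epsilon=1/M^3$ and sending $M\to\infty$ then yields $+\infty$.
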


\begin{proposition} For $u\in L^1([0,T];\RR^\V)$,
\begin{align*}
  \FI^\epsilon(u) = \sup_{
    \substack{p\in C([0,T];\RR^{2\R}):\\p_{r_-}<1,\ p_{r_+}<1,\\(p_{r_-}-1)(p_{r_+}-1)>1}
    } 
    \sum_{r\in\R} \kappa^\epsilon_{r_-}\!\pi^\epsilon_{r_-}\! \int_{[0,T]}\!\kappa^\epsilon_{r_-}\!\pi^\epsilon_{r_-}\! \big\lbrack p_{r_-}\!(t)u_{r_-}\!(t) + p_{r_+}\!(t)u_{r_+}\!(t)\big\rbrack\,dt.
\end{align*}
\label{prop:Fisher dual}
\end{proposition}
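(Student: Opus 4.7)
The backbone is the pointwise dual identity
\[
  \tfrac12\bigl(\sqrt a - \sqrt b\,\bigr)^2 \;=\; \sup_{\substack{p,q<1\\(p-1)(q-1)>1}} \tfrac12\bigl(p a + q b\bigr), \qquad a,b\geq 0.
\]
To see this, set $P := 1-p > 0$ and $Q := 1-q > 0$, so the constraint reads $PQ>1$ and the objective becomes $\tfrac12(a+b) - \tfrac12(Pa + Qb)$. For $a, b > 0$, AM--GM gives $Pa + Qb \geq 2\sqrt{PQ\cdot ab} \geq 2\sqrt{ab}$, and the bound $2\sqrt{ab}$ is approached by $P = (1+\delta)\sqrt{b/a}$, $Q = \sqrt{a/b}$ as $\delta \downarrow 0$, so the supremum equals $\tfrac12(a+b) - \sqrt{ab} = \tfrac12(\sqrt a - \sqrt b)^2$. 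The cases $a = 0$ or $b = 0$ are handled by letting $P \to \infty$ with $Q \downarrow 0$ subject to $PQ > 1$, which again yields $\tfrac12(a+b)$.

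Apply this pointwise with $a = u_{r_-}(t)$, $b = u_{r_+}(t)$ for each $r \in \R$ and $t$, multiply by the weight $\kappa^\epsilon_r \pi^\epsilon_{r_-}$, sum over $r$, and integrate in $t$; recalling \eqref{eq:FI} one obtains
\[
  \FI^\epsilon(u) \;=\; \sum_{r\in\R} \int_{[0,T]}\!\sup_{(p_{r_-}(t),p_{r_+}(t))\in A}\!\tfrac12 \kappa^\epsilon_r \pi^\epsilon_{r_-}\bigl[p_{r_-}(t) u_{r_-}(t) + p_{r_+}(t) u_{r_+}(t)\bigr]\, dt,
\]
where $A$ is the open admissible set prescribed by the three inequalities in the statement. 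Since the optimisations at distinct $(r,t)$ are decoupled and the integrand is jointly continuous in $(p_{r_-},p_{r_+})$ and measurable in $t$, a standard measurable-selection argument lets one interchange sup with sum and integral, producing the same quantity but with the supremum taken over \emph{measurable} $p:[0,T]\to\RR^{2\R}$ satisfying the constraints pointwise. This clearly dominates the supremum over continuous $p$ in the statement, giving the $\geq$ direction.

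For the matching $\leq$ direction, the near-optimal measurable $p^*$, whose components look like $p^*_{r_-}(t) = 1 - (1+\delta)\sqrt{u_{r_+}(t)/u_{r_-}(t)}$ and $p^*_{r_+}(t) = 1 - \sqrt{u_{r_-}(t)/u_{r_+}(t)}$, can blow up where $u$ is small. The plan is a three-step approximation: (i) truncate the components to $[-M, M]$; (ii) retain a fixed margin $\delta>0$ away from the boundary $\{(p-1)(q-1) = 1\}$ so that the approximants stay strictly inside $A$; and (iii) mollify in $t$ to produce continuous admissible test functions. Since $u_{r_\pm}\in L^1([0,T])$, dominated convergence controls the effect of truncation and mollification on the linear-in-$p$ objective, and letting first the mollification width $\to 0$, then $M\to\infty$, and finally $\delta\downarrow 0$ recovers the full supremum computed above.

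The main technical obstacle is the coupling between the three limiting procedures (i)--(iii) with the unboundedness of the pointwise optimiser on $\{u_{r_-} u_{r_+} = 0\}$: one must order the limits so that the truncation does not hide contributions where $u$ is small and the margin $\delta$ does not suppress regions where $u$ is large. Apart from this, the argument is a routine convex-analytic duality.
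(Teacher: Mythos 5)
Your proposal is correct and takes essentially the same route as the paper. Both prove the pointwise dual identity for $(\sqrt a-\sqrt b)^2$ — the paper by computing the Legendre transform $\Fi^*(p,q)$ and invoking biconjugation, you by an explicit AM--GM argument (substituting $P=1-p$, $Q=1-q$), which is the same computation carried out by hand — and then pass to the integrated supremum over measurable test functions and appeal to an approximation argument (cut-off/truncation plus mollification) to reach continuous $p$, which is precisely what the paper compresses into ``After checking that a cut-off from below and a convolution leave the conditions invariant, the result follows by a standard approximation argument.''
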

\begin{proof}
Note that upon writing $\Fi(a,b)$ for the argument in the integral in~\eqref{eq:FI}, 
\[
\Fi(a,b)   :=(\sqrt{a}-\sqrt{b})^2
\]
we can characterize the function $\Fi$ by
\begin{align}
  \Fi(a,b)   &=\sup_{p,q} ap+bq - \Fi^*(p,q) = \sup_{p<1,q<1, (p-1)(q-1)>1} ap+bq
\notag\\
\intertext{where}
  \Fi^*(p,q) &:=\sup_{a,b\geq0} ap+bq - \Fi(a,b) = \chi\{p<1,q<1, (p-1)(q-1)>1\}.
\end{align}
We use this to write for $u\in L^1([0,T];\RR^\V_+)$,
\begin{align*}
  \FI^\epsilon(u)
    &= 
     \sup_{
        \substack{p\in\M([0,T];\RR^{2\R}):\\p_{r_-}<1,\ p_{r_+}<1,\\(p_{r_-}-1)(p_{r_+}-1)>1}
      } \,
    \sum_{r\in\R} \kappa^\epsilon_{r_-}\!\pi^\epsilon_{r_-}\! \int_{[0,T]}\!(p_{r_-}\!u_{r_-} + p_{r_+}\!u_{r_+})\,dt.
\end{align*}
After checking that a cut-off from below and a convolution leave the conditions invariant, the result follows by a standard approximation argument.
\end{proof}

\begin{remark} The definition of the Fisher information can easily be extended to measures if we use the dual formulation. In fact, the supremum remains finite when the measure is finite:
\begin{align*}
  &\sup_{
    \substack{p\in C([0,T];\RR^{2\R}):\\p_{r_-}<1,p_{r_+}<1,\\(p_{r_-}-1)(p_{r_+}-1)>1}
       } 
  \sum_{r\in\R} \kappa^\epsilon_{r_-}\!\pi^\epsilon_{r_-}\! \int_{[0,T]}\!\left\lbrack p_{r_-}\!(t)u_{r_-}\!(dt) + p_{r_+}\!(t)u_{r_+}\!(dt)\right\rbrack
  \\
  &\qquad=
  \sup_{
    \substack{q\in C([0,T];\RR^{2\R}):\\q_{r_-}>0,q_{r_+}>0,\\q_{r_-}q_{r_+}>1}
       } 
  \sum_{r\in\R} \kappa^\epsilon_{r_-}\!\pi^\epsilon_{r_-}\!\int_{[0,T]}\!\left\lbrack (1-q_{r_-}\!(t))u_{r_-}\!(dt) + (1-q_{r_+}\!(t))u_{r_+}\!(dt)\right\rbrack \\
  &\qquad
  \leq \sum_{r\in\R} \kappa^\epsilon_{r_-}\!\pi^\epsilon_{r_-}\!\big(\lVert u_{r_-}\rVert_\TV + \lVert u_{r_+}\!\rVert_\TV\big).
\end{align*}
This shows that a uniform bounded Fisher information does not rule out the development of singularities in the densities, as explained in Section~\ref{subsec:spikes}.
\end{remark}

\section{Network properties and compactness}
\label{sec:network}

In this section we study the network decomposition introduced in Sections~\ref{subsec:connected components}, \ref{subsec:network decomposition fluxes}, \ref{subsec:network decomposition nodes}, and in particular the implications for the continuity equation. We derive estimates for sublevel sets of the rate functional and deduce compactness of these sublevel sets in the topological space~$\Theta$ as defined in Section~\ref{sec:main results}. We then use that topology to derive the limiting continuity equations. In addition, we show that any sequence of bounded cost will equilibrate over the fast cycle components, and then prove a stronger equilibration result that will be needed in the construction of the recovery sequence in Section~\ref{sec:Gamma convergence}.

\subsection{Network properties and the continuity equations}
\label{subsec:network properties}

Recall that we assumed that any node $x$ is either in $\V_0$ (when $\pi^\epsilon_x=\bigoh(1)$) or in $\V_1$ (when $\pi^\epsilon_x=\bigoh(\epsilon)$), and that leak edges, through which the non-equilibrium steady state flux is of order $\epsilon$, do not occur. Moreover, we further decomposed $\V_0$ into $\V_{0\fcycle}$ and $\V_{0\slow}$, where $\V_{0\fcycle}$ is defined as all nodes $x\in \V_0$ such there is at least least one fast reaction that leaves $x$.

The name $\V_{0\fcycle}$ (`fast cycle') reflects the fact that all nodes in this set belong to a cycle of fast fluxes, as the following simple lemma shows: 
\begin{lemma}
  The subgraph $(\V_{0\fcycle},\Rfcycle)$ consists purely of cycles. More explicitly, let $x^1\in\V_{0\fcycle}$. Then there exists a cycle $(r^k)_{k=1}^K\subset\Rfcycle, r^k_+=r^{k+1}_-, r^1_-=x^1=r^K_+$. Similarly any $r\in\Rfcycle$ is part of such a fast cycle.
\label{lem:fast cycles}
\end{lemma}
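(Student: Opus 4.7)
My plan rests on two observations: the subgraph $(\V_{0\fcycle},\Rfcycle)$ is closed (fast-cycle edges never leave $\V_{0\fcycle}$), and the leading-order stationary flux defines a strictly positive circulation on it whose cycle decomposition exhausts all edges.

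The starting point is the stationarity of $\pi^\epsilon$, which yields the nodal balance
\begin{equation*}
  \pi^\epsilon_x \sum_{r:\,r_-=x}\kappa^\epsilon_r \;=\; \sum_{r:\,r_+=x}\kappa^\epsilon_r\pi^\epsilon_{r_-},\qquad x\in\V.
\end{equation*}
Using the decomposition $\R=\Rslow\cup\Rfcycle\cup\Rdamped$ (no leak edges by assumption) together with the scalings $\kappa^\epsilon_r = O(1)$ for $r\in\Rslow$, $\kappa^\epsilon_r=\kappa_r/\epsilon$ for $r\in\Rfast$, and $\pi^\epsilon_x=O(1)$ on $\V_0$, $\pi^\epsilon_x=O(\epsilon)$ on $\V_1$, I would extract the $O(1/\epsilon)$ part of this balance. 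Fix $r\in\Rfcycle$ and set $y:=r_+$. If $y\in\V_1$ then the outflow at $y$ is $\pi^\epsilon_y\sum_{r':\,r'_-=y}\kappa^\epsilon_{r'}=O(1)$, which cannot absorb the $O(1/\epsilon)$ inflow contributed by $r$; hence $y\in\V_0$. Then the $O(1/\epsilon)$ balance at $y$ reduces to $\pi_y\sum_{r'\in\Rfast,\,r'_-=y}\kappa_{r'}=\sum_{r'\in\Rfcycle,\,r'_+=y}\kappa_{r'}\pi_{r'_-}$, and the right-hand side is strictly positive (it contains the term $\kappa_r\pi_{r_-}$), forcing at least one fast outgoing edge at $y$, i.e.\ $y\in\V_{0\fcycle}$.

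With the subgraph closed, the same leading-order balance at every $x\in\V_{0\fcycle}$ takes the form $\sum_{r\in\Rfcycle,\,r_-=x} f(r)=\sum_{r\in\Rfcycle,\,r_+=x} f(r)$, where $f(r):=\kappa_r\pi_{r_-}>0$ for $r\in\Rfcycle$. Thus $f$ is a strictly positive circulation on the finite directed graph $(\V_{0\fcycle},\Rfcycle)$. By the classical cycle decomposition of circulations — locate a cycle in the support of $f$ (possible since the support is nonempty and every vertex in it has positive out-degree, so following outgoing edges must revisit a vertex), subtract the minimum-weight multiple of its indicator from $f$, and iterate until $f\equiv 0$ — we obtain a representation of $f$ as a non-negative combination of indicators of simple cycles in $\Rfcycle$ whose union covers every edge of $\Rfcycle$.

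The conclusion now follows: given $x^1\in\V_{0\fcycle}$, the definition supplies an edge $r\in\Rfcycle$ with $r_-=x^1$; this edge lies in one of the cycles $C\subset\Rfcycle$ from the decomposition, and relabelling $C$ cyclically so that it starts and ends at $x^1$ gives the cycle required by the lemma. The analogous statement for an arbitrary $r\in\Rfcycle$ is immediate. The step I anticipate as the main obstacle is the first one: carefully reading off the $O(1/\epsilon)$ contributions of the stationarity equation while simultaneously ruling out $\V_1$ sources and slow edges; the subsequent cycle decomposition is standard and routine.
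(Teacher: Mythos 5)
Your proof is correct, and it takes a genuinely different route from the paper's. The paper argues greedily: starting from an edge $r^1\in\Rfcycle$ with $r^1_-=x^1$, it uses the stationarity equation at $x^2:=r^1_+$ to show $\pi^\epsilon_{x^2}=\bigoh(1)$ and that some outgoing edge at $x^2$ must be fast, hence $x^2\in\V_{0\fcycle}$ and $r^2\in\Rfcycle$; it then iterates, appealing to finiteness of the graph to close the walk into a cycle. Your argument shares exactly the same initial step — reading off the $\bigoh(1/\epsilon)$ part of the nodal balance to deduce that $\Rfcycle$-edges never leave $\V_{0\fcycle}$ — but then packages the conclusion differently: the same leading-order balance says $f(r):=\kappa_r\pi_{r_-}$ is a strictly positive circulation on the closed subgraph $(\V_{0\fcycle},\Rfcycle)$, and the classical cycle-decomposition of circulations shows that every edge in the support (here: every edge of $\Rfcycle$) lies on a simple cycle. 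What your version buys is rigor at the point where the paper is briefest — the claim that the greedy walk "only terminates when $x^{K+1}=x^1$" is not literally automatic (the walk could first close at an intermediate node), whereas the circulation decomposition delivers at once that the outgoing edge at $x^1$ sits on a cycle through $x^1$, and simultaneously gives the second claim about an arbitrary $r\in\Rfcycle$. The price is invoking an external (though standard) combinatorial fact; the paper's version is more self-contained at the cost of a slightly hand-waved termination step. Both proofs are essentially correct and both hinge on the same use of the stationarity equation at $\bigoh(1/\epsilon)$.
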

\begin{proof}
Let $r^1\in\Rfcycle$ with $r^1_-=x^1$, which exists by assumption $x^0\in\V_{0\fcycle}$, and let $x^2:=r^1_+$. The equilibrium equation in $x^2$ reads:
\begin{equation*}
  \pi^\epsilon_{x^2} \sum_{r\in\R:r_-=x^2} \kappa^\epsilon_r = \sum_{r\in\R:r_+=x^2}\kappa^\epsilon_r\pi^\epsilon_{r_-} \geq \kappa^\epsilon_{r^1} \pi^\epsilon_{x^1}.
\end{equation*}
The right-hand side is of order $1/\epsilon$, and so for the left-hand side $\pi^\epsilon_{x^2}$ must be order $1$ (or higher, which is ruled out by assumption), and the sum contains at least one $r^2:=r\in\Rfast$. It follows that $x^2\in\V_{0\fcycle}$ and $r^2\in\Rfcycle$. We then repeat the same argument, which only terminates when $x^{K+1}=x^1$. The second claim is true by the same argument.
\end{proof}

We can then enumerate all possible edges from and to $\V_{0\slow}$, $\V_{0\fcycle}$, and $\V_1$.
\begin{lemma}\quad
\vspace{-0.2cm}
\begin{enumerate}[(i)]
\item If $x\in\V_{0\slow}$, then all incoming edges $r\in\R,r_+=x$ are either in $\Rslow$ or in $\Rdamped$, and all outgoing edges $r\in\R,r_+=x$ are in $\Rslow$.
\item If $x\in\V_{0\fcycle}$, then the incoming edges could be of any type, and all outgoing edges $r\in\R,r_-=x$ are either in $\Rslow$ or in $\Rfcycle$. 
\item If $x\in\V_1$, then all incoming fluxes $r\in\R,r_+=x$ are either in $\Rslow$ or $\Rdamped$, and all outgoing fluxes $r\in\R,r_-=x$ are in $\Rdamped$.
\end{enumerate}
\label{lem:node categorisation}
\end{lemma}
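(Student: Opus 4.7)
The proof will be a direct case analysis driven by (a) the definitions of the node classes $\V_{0\slow},\V_{0\fcycle},\V_1$, (b) the definitions of the edge classes $\Rslow,\Rfcycle,\Rdamped$, (c) the no-leak assumption (which forbids slow edges with source in $\V_1$), and (d) one structural consequence of Lemma~\ref{lem:fast cycles}: every $r\in\Rfcycle$ has \emph{both} endpoints in $\V_{0\fcycle}$. Indeed, Lemma~\ref{lem:fast cycles} places any $r\in\Rfcycle$ inside a cycle of fast edges in $\V_{0\fcycle}$, so in particular the head $r_+$ has an outgoing fast edge (the next edge in the cycle), putting $r_+\in\V_{0\fcycle}$. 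This single observation is what prevents an fcycle edge from entering $\V_{0\slow}$ or $\V_1$, and is the only non-trivial ingredient.

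For part (i), I fix $x\in\V_{0\slow}$. For outgoing edges, fcycle is excluded by the very definition of $\V_{0\fcycle}$, and damped is excluded because $x\in\V_0$ whereas damped requires $r_-\in\V_1$. So any outgoing edge is slow. For incoming edges, fcycle is excluded by the observation above (its head lies in $\V_{0\fcycle}$), and the remaining cases are slow (with $r_-\in\V_0$; slow with $r_-\in\V_1$ would be a leak) or damped.

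For part (ii), I fix $x\in\V_{0\fcycle}\subset\V_0$. Outgoing damped edges are again excluded because $x\notin\V_1$, leaving slow and fcycle as the only possibilities for outgoing edges. For incoming edges, all four \emph{a priori} types \{slow from $\V_0$, slow from $\V_1$, fast from $\V_0$, fast from $\V_1$\} are available; only slow from $\V_1$ is excluded by the no-leak assumption, so incoming edges can belong to any of $\Rslow$, $\Rfcycle$, $\Rdamped$.

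For part (iii), I fix $x\in\V_1$. An outgoing slow edge would be leak, hence forbidden; an outgoing fast edge with $r_-\in\V_1$ is damped by definition; so every outgoing edge is damped. For incoming edges, fcycle is excluded because an fcycle edge has $r_+\in\V_{0\fcycle}\subset\V_0$, disjoint from $\V_1$, so incoming edges are slow or damped.

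No step is really difficult — the whole proof is a finite bookkeeping exercise once Lemma~\ref{lem:fast cycles} is in hand. The only mildly subtle point, and where I would be most careful in writing it up, is the exclusion of fcycle edges incident at nodes outside $\V_{0\fcycle}$: it requires invoking the cycle structure (not just the definition $r_-\in\V_0$) to rule out $r_+\in\V_{0\slow}\cup\V_1$.
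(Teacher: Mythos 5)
Your proof is correct, and it uses the same ingredients as the paper's (the definitions of the node/edge classes, the no-leak assumption, and Lemma~\ref{lem:fast cycles}). The one place where you and the paper diverge is in excluding an incoming $\Rfcycle$ edge at a node $x\in\V_1$: you derive it from the structural fact that every fast-cycle edge has \emph{both} endpoints in $\V_{0\fcycle}$ (a consequence of Lemma~\ref{lem:fast cycles}, which you spell out), whereas the paper instead argues via the equilibrium equation at $x$ --- the outgoing rates from $x\in\V_1$ are of order $1/\e$ with $\pi^\e_x$ of order $\e$, so the equilibrium balance at $x$ has size $\mathcal O(1)$, which an incoming fast edge from $\V_0$, contributing a term of order $1/\e$, would violate. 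Both routes are correct; yours has the small advantage of handling the exclusion of incoming fcycle edges at $\V_{0\slow}$ and at $\V_1$ by one and the same observation, whereas the paper is terse for the $\V_{0\slow}$ case (``follows immediately from the definitions''), implicitly relying on the same structural consequence of Lemma~\ref{lem:fast cycles} that you make explicit.
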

\begin{proof}
For $x\in\V_{0\slow}$ or $\V_{0\fcycle}$, the statement follows immediately from the definitions of $\Rslow,\Rdamped$ and $\Rfcycle$. For $x\in\V_1$ any slow outgoing edge will be of leak type that we ruled out by assumption and any fast outgoing edge is damped. Since all outgoing edges are of order $1$, an incoming fast cycle edge of order $1/\epsilon$ would imply that $\pi^\epsilon_x$ is of order $1/\epsilon$, which is ruled out by the conservation of mass.
\end{proof}


We can now write down the rescaled continuity equations. Although for $\epsilon>0$ all densities $u$ and fluxes $j$ and $\tilde\jmath$ have $W^{1,1}$ and $L^1$ regularity respectively, provided  the rate functional~\eqref{eq:flux RF} is finite, some of this regularity is lost in the regime $\epsilon\to0$. Therefore it will be useful to write the continuity equations in a different form. In the following we will say that
\begin{equation*}
  \pi_x^\epsilon \dot u^\epsilon_x = \sum_r j_r^\epsilon \qquad\text{in the weak sense},
\end{equation*}
whenever
\begin{equation}
  -\int_{[0,T]}\!\dot\phi(t)\,\pi_x^\epsilon u^\epsilon_x(dt) = \sum_r \int_{[0,T]}\!\phi(t) j^\epsilon_r(dt) \qquad\text{ for all } \phi\in C^1_0([0,T]),
\label{eq:weak sense}
\end{equation}
where we identify $u_x(dt)=u_x(t)\,dt$ and $j_r(dt)=j_r(t)\,dt$ wherever possible. If for a fixed $\epsilon>0$ we have $\tilde\I_0^\epsilon+\tilde\J^\epsilon<\infty$, then by~\eqref{eq:flux RF} we know that all densities are absolutely continuous and all fluxes have $L^1$-densities. We will then say that
\begin{equation*}
  \pi_x^\epsilon \dot u^\epsilon_x = \sum_r j^\epsilon_r \qquad\text{in the mild sense},
\end{equation*}
whenever for all $0\leq t_0\leq t_1\leq T$,
\begin{equation}
  \pi_x^\epsilon u^\epsilon_x(t_1) - \pi_x^\epsilon u^\epsilon_x(t_1) = \sum_r j^\epsilon_r[t_0,t_1],
\label{eq:mild sense}
\end{equation}
using the notation
\begin{equation*}
  j^\epsilon_r[t_0,t_1]:=
  \begin{cases}
    \int_{[t_0,t_1]}\!j^\epsilon_r(t)\,dt, & j^\epsilon_r\in L^1(\lbrack 0,T\rbrack),\\
    \int_{[t_0,t_1]}\!j^\epsilon_r(dt),    & j^\epsilon_r\in \M(\lbrack 0,T\rbrack).
  \end{cases}
\end{equation*}

\begin{corollary} 
\label{cor:rescaled-e-cont-eq}
After rescaling, the continuity equations \eqref{eq:flux ODE} and \eqref{eq:sum density eps} are, for fixed $\epsilon>0$, in the weak sense,

\begin{subequations}
\begin{align}
  \pi^\e_\fc u^\e_\fc &= \sum_{x\in\fc} \pi^\e_x u^\e_x
    &\text{for } \fc\in\fC,
    \label{eq:cont eq sum density eps}\\
  \pi^\epsilon_x \dot u^\epsilon_x = 
    &\sum_{\substack{r\in\Rslow:\\r_+=x}} j^\epsilon_r
      + 
    \sum_{\substack{r\in\Rdamped:\\r_+=x}} j^\epsilon_r
      -
    \sum_{\substack{r\in\Rslow:\\r_-=x}} j^\epsilon_r
      &\text{for } x\in\V_{0\slow},
\label{eq:cont eq V0s}\\
  \pi^\epsilon_x\dot u^\epsilon_x = 
    &\sum_{\substack{r\in\Rslow:\\r_+=x}} j^\epsilon_r
      + 
    \sum_{\substack{r\in\Rdamped:\\r_+=x}} j^\epsilon_r
      -
    \sum_{\substack{r\in\Rslow:\\r_-=x}} j^\epsilon_r\notag\\
      & +
    \sum_{\substack{r\in\Rfcycle:\\r_+=x}} \Big( \tfrac{1}{\epsilon}\kappa_r \pi^\epsilon_{r_-} u^\epsilon_{r_-} + \tfrac{1}{\sqrt{\epsilon}}\tjeps_r \Big) 
       -
    \sum_{\substack{r\in\Rfcycle:\\r_-=x}} \Big( \tfrac{1}{\epsilon}\kappa_r \pi^\epsilon_{r_-} u^\epsilon_{r_-} + \tfrac{1}{\sqrt{\epsilon}}\tjeps_r \Big),
    &\text{for } x\in\V_{0\fcycle},
\label{eq:cont eq V0f}\\
  \pi^\epsilon_x \dot u^\epsilon_x = &
    \sum_{\substack{r\in\Rslow:\\r_+=x}} j^\epsilon_r
      + 
    \sum_{\substack{r\in\Rdamped:\\r_+=x}} j^\epsilon_r -
    \sum_{\substack{r\in\Rdamped:\\r_-=x}} j^\epsilon_r ,
    &\text{for } x\in\V_1.
\label{eq:cont eq V1}
\end{align}
If in addition $\tilde\I^\e_0(u^\epsilon(0))+\tilde\J^\e(u^\e,j^\e)<\infty$, then these equations also hold in the mild sense of~\eqref{eq:mild sense}.
\label{eq:cont eq}
\end{subequations}

\end{corollary}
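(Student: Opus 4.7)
The statement is a bookkeeping exercise that simply makes the rescalings of concentrations and fluxes explicit in the original continuity equation. My plan is to carry out the substitution on a node-by-node basis, using the node categorisation of Lemma~\ref{lem:node categorisation} to identify which edge classes are adjacent to which nodes, and then to treat the weak and mild senses separately.

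First, I would rewrite $\dot\rho^\epsilon(t)=-\div j^\epsilon(t)$ componentwise as
\[
  \dot\rho^\epsilon_x(t)=\sum_{r\in\R,\,r_+=x} j^\epsilon_r(t) - \sum_{r\in\R,\,r_-=x} j^\epsilon_r(t), \qquad x\in\V,
\]
and substitute $\rho^\epsilon_x=\pi^\epsilon_x u^\epsilon_x$. Because $\pi^\epsilon_x$ is time-independent, the left-hand side becomes $\pi^\epsilon_x\dot u^\epsilon_x$ directly. Next, I would split the two sums on the right according to the disjoint decomposition $\R=\Rslow\cup\Rfcycle\cup\Rdamped$ and appeal to Lemma~\ref{lem:node categorisation} to delete the summands that cannot occur. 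For $x\in\V_{0\slow}$, only edges in $\Rslow\cup\Rdamped$ come in and only $\Rslow$-edges leave, yielding \eqref{eq:cont eq V0s}. For $x\in\V_{0\fcycle}$ all three classes may enter and $\Rslow\cup\Rfcycle$ may leave; inserting the definition $j^\epsilon_r=\tfrac{1}{\epsilon}\kappa_r\pi^\epsilon_{r_-}u^\epsilon_{r_-}+\tfrac{1}{\sqrt\epsilon}\tjeps_r$ on $\Rfcycle$ produces \eqref{eq:cont eq V0f}. For $x\in\V_1$, only $\Rslow\cup\Rdamped$ enter and only $\Rdamped$ leave, giving \eqref{eq:cont eq V1}. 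The remaining identity \eqref{eq:cont eq sum density eps} is nothing but the defining relation \eqref{eq:sum density eps} restated in the new notation.

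Finally, I would separate the two notions of solution. For the weak sense, test against $\phi\in C^1_0([0,T])$ and integrate by parts the left-hand side, using $\pi^\epsilon_x u^\epsilon_x(dt)=\pi^\epsilon_x u^\epsilon_x(t)\,dt$ (which holds since $\rho^\epsilon_x\in W^{1,1}$ as soon as the underlying ODE \eqref{eq:flux ODE} is satisfied); this is exactly the pairing \eqref{eq:weak sense}. For the mild sense, observe that finiteness of $\tilde\I_0^\epsilon(u^\epsilon(0))+\tilde\J^\epsilon(u^\epsilon,j^\epsilon)$ together with the convention $s(a\mid b)=\infty$ for $a>0,\,b=0$ forces each $j^\epsilon_r$ and $\tjeps_r$ into $L^1([0,T])$ and each $u^\epsilon_x$ into $W^{1,1}([0,T])$; the equation then holds a.e.\ in $t$, and integrating over $[t_0,t_1]$ yields \eqref{eq:mild sense}.

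I do not anticipate any genuine obstacle: the only thing to watch is that the case analysis is exhaustive, and this is precisely what Lemma~\ref{lem:node categorisation} provides. The minor technical point is the regularity needed to pass from the distributional identity to the mild form, but this is an immediate consequence of the structure of $s(\cdot\mid\cdot)$ and the definition \eqref{eq:flux RF rescaled}.
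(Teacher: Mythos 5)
Your proposal is correct and follows essentially the route the paper intends; the corollary carries no proof in the text precisely because it is the bookkeeping exercise you carry out, with Lemma~\ref{lem:node categorisation} supplying exactly the case analysis needed to prune the sums node by node, and the definition of $\tjeps_r$ yielding the fast-cycle terms in~\eqref{eq:cont eq V0f}. One small quibble: the $L^1$/$W^{1,1}$ regularity needed for the mild form comes from the explicit domain restriction in the definition of $\J^\epsilon$ in~\eqref{eq:flux RF} ($\rho\in W^{1,1}$, $j\in L^1$, else $\J^\epsilon=\infty$), not from the convention $s(a\mid 0)=\infty$ — that convention only rules out positive flux at zero rate, not non-integrability — but this does not affect the validity of your argument.
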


\subsection{Boundedness of densities and fluxes}
\label{subsec:estimates}

The aim of this section is to prove uniform bounds that are needed to derive the equicoercivity Theorem~\ref{th:equicoercivity} later on.

\begin{lemma}[Boundedness of densities]
\label{l:bdd-densities}
Let $(u^\e,j^\e)_{\epsilon>0}\subset\Theta$ such that
$\tilde\I^\epsilon_0\big(u^\e(0)\big) + \tilde\J^\epsilon(u^\e,j^\e)\leq C$
for some $C>0$. Then 
\begin{enumerate}
\item $(u^\e_{\V_{0\slow}},u^\e_\fC)$ and $u^\e_{\V_{0\fcycle}}$ are uniformly bounded in $C([0,T];\RR^{\V_{0\slow}\cup\fC})$ and $L^\infty([0,T];\RR^{\V_{0\fcycle}})$;
\item $(u^\e_{\V_{0\slow}},u^\e_{\V_{0\fcycle}},u^\e_{\V_1})$ is uniformly bounded in $L^1([0,T];\RR^{\V_{0\slow}\cup\V_{0\fcycle}\cup\V_1})$;
\item \label{l:e:eps-u-V1-to-zero}
$\e\|u_x^\e\|_{C([0,T])} \longrightarrow 0$ for all $x\in \V_1$ as $\e\to0$.
\end{enumerate}
\end{lemma}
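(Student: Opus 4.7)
The engine is the FIR inequality (Lemma~\ref{l:FIR}): together with the rescaling identity $\tilde\J^\e(u^\e,j^\e)=\J^\e(\pi^\e u^\e,\text{unrescaled fluxes})$, the hypothesis immediately gives uniform-in-$\e$ bounds on the snapshot entropy $\sup_t \tilde\I^\e_0(u^\e(t))$ and on the Fisher information $\FI^\e(u^\e)$. Every estimate below is extracted from one of these two ingredients.

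For parts (1) and (3) I would expand $\tilde\I^\e_0(u^\e(t))=\sum_x s(\pi^\e_x u^\e_x(t)\mid \pi^\e_x)$ and apply the linearisation~\eqref{eq:s linearisation}, which gives $a \le (s(a\mid b)+(\alpha-1)b)/\log\alpha$ for every $\alpha>1$. For $x\in\V_{0\slow}\cup\V_{0\fcycle}$ the weight $\pi^\e_x\to\pi_x>0$ is bounded away from $0$, and a fixed choice of $\alpha$ yields the required $L^\infty$ bound. For a fast cycle $\fc\in\fC$ the log-sum (convexity) inequality $s(\pi^\e_\fc u^\e_\fc\mid\pi^\e_\fc)\le \sum_{x\in\fc} s(\pi^\e_x u^\e_x\mid\pi^\e_x)$, combined with~\eqref{eq:pic} and $\pi^\e_\fc\to\pi_\fc>0$, reduces this to the previous case. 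For part (3), $\pi^\e_x\sim \e\tilde\pi_x$ on $\V_1$, so the same linearisation with $\alpha=1/\sqrt\e$ gives $\pi^\e_x u^\e_x(t)\le 2C/|\log\e|+o(1)$ uniformly in $t$; dividing by $\pi^\e_x/\e$ (which stays bounded) yields $\e\|u^\e_x\|_{C([0,T])}\to0$.

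The $L^1$ bounds in part (2) for $\V_0$ nodes and fast cycles follow at once from the $L^\infty$ bounds already proved. For $u^\e_{\V_1}$ I would use the Fisher information: for $r\in\Rdamped$ the weight $\kappa^\e_r\pi^\e_{r_-}$ converges to $\kappa_r\tilde\pi_{r_-}\in(0,\infty)$, so
\begin{equation*}
\int_0^T\bigl(\sqrt{u^\e_{r_-}(t)}-\sqrt{u^\e_{r_+}(t)}\bigr)^2\,dt \le C.
\end{equation*}
Combined with the AM-GM inequality $a \le 2(\sqrt a-\sqrt b)^2+2b$ (equivalent to $(\sqrt a-2\sqrt b)^2\ge 0$) this yields
\begin{equation*}
\int_0^T u^\e_{r_-}(t)\,dt \le 2C + 2\int_0^T u^\e_{r_+}(t)\,dt, \qquad r\in\Rdamped.
\end{equation*}
Iterating this inequality along a damped path from any $x\in\V_1$ to some $y\in\V_0$, of length at most $|\V|$, propagates the $L^\infty$ bound at $y$ into an $L^1$ bound at $x$.

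The main obstacle is producing this damped path to $\V_0$. I would argue as follows: let $C_x\subset\V$ be the set of all nodes reachable from $x$ via damped edges, and suppose for contradiction that $C_x\subset\V_1$. By Lemma~\ref{lem:node categorisation}(iii) every outgoing edge from a node of $C_x$ is damped, and hence stays in $C_x$ by definition. Summing the stationarity relation $\sum_{r:r_+=y}\kappa^\e_r\pi^\e_{r_-}=\sum_{r:r_-=y}\kappa^\e_r\pi^\e_{r_-}$ over $y\in C_x$, the internal edges cancel and we are left with
\begin{equation*}
\sum_{r:\,r_+\in C_x,\,r_-\notin C_x}\kappa^\e_r\pi^\e_{r_-} \;=\; \sum_{r:\,r_-\in C_x,\,r_+\notin C_x}\kappa^\e_r\pi^\e_{r_-} \;=\; 0,
\end{equation*}
which contradicts the diconnectedness of $\calG$, since that forces at least one edge from $\V\setminus C_x$ into $C_x$ with strictly positive weight. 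Hence $C_x\cap\V_0\ne\emptyset$ and the required damped path exists.
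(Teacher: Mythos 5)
Your proof is correct, and the key engine (FIR inequality, entropy linearisation, Fisher information plus path propagation) is the same as the paper's. There are, however, two places where you take a detour that the paper avoids, and it is worth being aware of them.

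For part (1) and part (3) you derive the snapshot entropy bound $\sup_t\tilde\I^\e_0(u^\e(t))\le 2C$ from the FIR inequality and then linearise. The paper instead observes that the total mass $\sum_x\pi^\e_x u^\e_x(t)$ is conserved in time (this is automatic from $\dot\rho=-\div j$ with no-flux boundary), and bounds the initial total mass directly from $\tilde\I_0^\e(u^\e(0))$ via~\eqref{eq:s linearisation}. This gives the same $L^\infty$-type bounds on $\V_0$ without invoking the FIR inequality at all for this step, so it is marginally lighter; your route is still valid, since the FIR inequality is available anyway. For part (3) your linearisation with $\alpha=1/\sqrt\e$ gives $\pi^\e_x u^\e_x(t)=O(1/|\log\e|)$, which is a clean quantitative version of what the paper proves with the abstract generalized inverse $\eta^{-1}$ of a superlinear function; both are correct and the rate $1/|\log\e|$ is a nice explicit bonus.

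For part (2), the substantive difference is that you restrict to damped edges and then need a separate argument (the stationarity-plus-diconnectedness contradiction) to show that from every $x\in\V_1$ there is a damped path reaching $\V_0$. That argument is correct: Lemma~\ref{lem:node categorisation}(iii) ensures all outgoing edges from $\V_1$ are damped, so $C_x$ has no outgoing edges if $C_x\subset\V_1$, and the stationarity relation summed over $C_x$ then forces zero incoming equilibrium flux into $C_x$, contradicting diconnectedness. However, this whole lemma is unnecessary: the Fisher bound $\int(\sqrt{u^\e_{r_-}}-\sqrt{u^\e_{r_+}})^2\,dt\le 2C/(\kappa^\e_r\pi^\e_{r_-})$ is uniformly small for \emph{every} edge type, not only damped ones --- for $r\in\Rslow$ and $r\in\Rdamped$ the prefactor $\kappa^\e_r\pi^\e_{r_-}$ converges to a positive constant, and for $r\in\Rfcycle$ it blows up, which only tightens the estimate. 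So one can start from any $x^0\in\V_0$ (nonempty by mass conservation) and propagate the $L^1$ bound along an arbitrary path to any $y\in\V$, using only the diconnectedness of the full graph $(\V,\R)$. This is what the paper does, and it saves the extra reachability lemma. Keeping the damped-edge restriction is not wrong, but it adds work that the problem does not require.
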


\begin{proof}
From \eqref{eq:s linearisation} and mass conservation we derive a uniform bound on the total mass for each $t\in\lbrack0,T\rbrack$:
\begin{equation}
  \sum_{x\in\V} \pi^\epsilon_x u_x^\epsilon(t) = \sum_{x\in\V} \pi^\epsilon_x u_x^\epsilon(0) \leq \tilde\I^\e_0(u^\e(0)) + (e-1)\sum_x \pi_x^\epsilon\leq C + e-1.
\label{eq:Linf bound}
\end{equation}
This implies the $C$-bounds on $u^\e_{\V_{0\slow}},u^\e_\fC$, and the $L^\infty$ bound on $u^\e_{\V_{0\fcycle}}$.

From the FIR inequality~\eqref{eq:FIR inequality} we deduce that
\begin{align}
  C &\geq \tfrac12\tilde\I^\epsilon_0\big(u^\epsilon(0)\big) + \tilde\J^\epsilon\big(u^\epsilon_{\V_0},u^\epsilon_{\V_1},j^\epsilon_{\Rslow},j^\epsilon_{\Rdamped},\tjeps_{\Rfcycle}\big) \notag\\
    &\geq \FI^\epsilon(u^\epsilon)
  = \mfrac12\sum_{r\in\R}\kappa^\epsilon_{r}\pi^\epsilon_{r_-}\!\int_{[0,T]}\!\left( \sqrt{u^\epsilon_{r_-}\!(t)} - \sqrt{u^\epsilon_{r_+}\!(t)}\,\right)^2\,dt.
\label{eq:FIR application}
\end{align}
Hence by \eqref{eq:pi convergence}, for $\epsilon$ sufficiently small and any $r\in\R$:
\begin{equation*}
  \int_{[0,T]}\!\left( \sqrt{u^\epsilon_{r_-}\!(t)} - \sqrt{u^\epsilon_{r_+}\!(t)}\,\right)^2\,dt \leq
    \begin{cases}
      \frac{4C}{\kappa_r\pi_{r_-}},  &r\in\Rslow,\\
      \frac{4C}{\kappa_r\tilde\pi_{r_-}},  &r\in\Rdamped,\\
      \frac{4C}{\kappa_r\pi_{r_-}}\epsilon,  &r\in\Rfcycle.
    \end{cases}
\end{equation*}

Since $\V$ is finite, $\V_0$ cannot be empty, since otherwise the total mass in the system would vanish. Take an arbitrary $x^0\in\V_0$; by~\eqref{eq:Linf bound} we have $\lVert u^\epsilon_{x^0}\rVert_{L^\infty(0,T)}\leq 2(C+e-1)/\pi_{x^0}$ for sufficiently small $\epsilon$. Now take an arbitrary $y\in\V$. By irreducibility of the graph $(\V,\R)$ there exists a sequence of edges $x^0 \xrightarrow{r^{01}} x^1 \xrightarrow{r^{12}} \hdots\to x^n=y$. For the first edge we find, using the inequality $a\leq 2(\sqrt a - \sqrt b)^2 + 2b$, 
\begin{equation*}
  \int_{[0,T]}\!u^\epsilon_{x^1}(t)\,dt \leq 2\int_{[0,T]}\!\left( \sqrt{u^\epsilon_{x^0}(t)} - \sqrt{u^\epsilon_{x^1}(t)}\,\right)^2\,dt + 2\int_{[0,T]}\!u_{x^0}^\epsilon(t)\,dt
    \leq \frac{8C}{\kappa_{01}\pi_{x^0}} + 4T\frac{C+e-1}{\pi_{x^0}}.
\end{equation*}
Repeating this procedure for all edges yields that $u^\epsilon_y$ is uniformly bounded in $L^1(0,T)$.

Finally we prove the vanishing of $\e u_{\V_1}^\e$. We also  deduce from~\eqref{eq:FIR inequality} that for all $0\leq t\leq T$, 
\begin{multline*}
C \geq \frac12 \tilde\I_0^\e\bigl(u^\e(t)\bigr) \geq \frac12 \sum_{x\in \V_1} s(\e u^\e_x(t)\tilde \pi_x^\e \, | \, \e\tilde \pi^\e_x)
= \frac12 \sum_{x\in \V_1} \e \tilde \pi_x^\e \,s(u^\e_x(t) \, | \, 1)
\geq  \sum_{x\in \V_1} \e \tilde \pi_x^\e \, \eta\bigl(u_x^\e(t)\bigr), \\
 \text{with } 
\eta(\tau) := \begin{cases}
\tfrac12 \bigl[\tau\log \tau - \tau + 1\bigr]& \text{if } \tau\geq 1,\\
0 & \text{if }0\leq \tau \leq 1.
\end{cases}
\end{multline*}
Since the $\tilde \pi^\e_x$ are bounded away from zero, we find that 
\[
\eta\bigl(u_x^\e(t)\bigr) \leq \frac C\e \qquad\Longrightarrow \qquad
0\leq u_x^\e(t) \leq \eta^{-1}\Bigl(\frac C\e\Bigr),
\]
where $\eta^{-1}$ is the right-continuous generalized inverse of $\eta$. Since $\eta$ is superlinear at infinity, $\e\eta^{-1}(C/\e)\to0$ as $\e\to0$, and we find that $\e\|u_{\V_1}^\e\|_{C([0,T])} \longrightarrow 0$ as $\e\to0$. 
\end{proof}

\begin{lemma}[Boundedness of slow fluxes]
\label{lem:bound slow fluxes}
Let $(u^\e,j^\e)_{\epsilon>0}\subset\Theta$ such that
$\tilde\I^\epsilon_0\big(u^\e(0)\big) + \tilde\J^\epsilon(u^\e,j^\e)\leq C$
for some $C>0$. Then the slow fluxes $j^\epsilon_{\Rslow}$ are uniformly bounded in $L^\C([0,T];\RR^{\Rslow})$. It follows that there is a non-decreasing function $\omega:[0,\infty)\to[0,\infty)$ with $\lim_{\sigma\downarrow0}\omega(\sigma) = 0$ such that for all $0\leq t_0\leq t_1\leq T$, (using the notation from~\eqref{eq:mild sense})
\begin{equation}
\label{est:jslow}
\sup_{\e>0} \sum_{r\in \Rslow} j_r^\e([t_0,t_1])\leq \omega(t_1-t_0).
\end{equation}
\end{lemma}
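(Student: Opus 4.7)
The plan is to derive the $L^\C$ bound pointwise via a Young-type inequality that connects $\C$ to the rescaled entropy $s$, then extract the modulus $\omega$ from the $L^\C$ bound by equi-integrability.

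\textbf{Step 1: a pointwise bound of the form $\C(\alpha j)\le \alpha\, s(j|b)+\tfrac12\C(2\alpha b)$.} By evenness and convexity of $\C$,
\[
  \C(\alpha j)=\C\!\left(\tfrac{2\alpha(j-b)+2\alpha b}{2}\right)\le \tfrac12\C\big(2\alpha(j-b)\big)+\tfrac12\C(2\alpha b).
\]
If $2\alpha b\le 1$ we may write $2\alpha(j-b)=(2\alpha b)\cdot(j-b)/b$ and apply \eqref{eq:C linear scaling} followed by \eqref{eq:C bdd by s}:
\[
  \C\big(2\alpha(j-b)\big)\le 2\alpha b\,\C\!\left(\tfrac{j-b}{b}\right)\le 2\alpha\, s(j\mid b).
\]
Combining gives the desired estimate (with the convention that points where $b=0$ can be discarded, since then $s(j|b)<\infty$ forces $j=0$).

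\textbf{Step 2: choosing $\alpha$ uniformly and integrating.} For $r\in\Rslow$, we have $r_-\in \V_0=\V_{0\slow}\cup\V_{0\fcycle}$, and Lemma~\ref{l:bdd-densities}(1) together with $\pi_{r_-}^\e\to\pi_{r_-}$ furnishes a constant $M$, independent of $\e$, with $b_r^\e(t):=\kappa_r\pi^\e_{r_-}u_{r_-}^\e(t)\le M$ for all $t$, $r\in\Rslow$ and $\e$ small. Fix $\alpha:=1/(2M)$, so that $2\alpha b_r^\e\le 1$ and Step~1 applies. Integrating in time and summing over $r\in\Rslow$,
\[
  \sum_{r\in\Rslow}\int_{[0,T]}\!\C\big(\alpha j^\e_r(t)\big)\,dt
  \le \alpha\,\tilde\J^\e_\slow(u^\e,j^\e_\Rslow)+\tfrac{T|\Rslow|}{2}\,\C(2\alpha M)
  \le \alpha C + C',
\]
where $C'$ depends only on $T$, $|\Rslow|$ and $M$. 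The Orlicz-norm inequality \eqref{eq:est-Orlicz} then yields $\alpha\|j^\e_\Rslow\|_{L^\C}\le 1+\alpha C+C'$, proving the first assertion.

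\textbf{Step 3: from the $L^\C$ bound to the modulus $\omega$.} Since $\C$ is superlinear at infinity, the uniform estimate $\sum_r\int\C(\alpha j^\e_r)\,dt\le K$ implies equi-integrability of the family $\{j^\e_r\}_{\e>0,\,r\in\Rslow}$ by (the direct half of) de la Vall\'ee-Poussin. Concretely, from Young's inequality $ab\le \C(a)+\C^*(b)$ applied with $a=\alpha j^\e_r$ and $b=\lambda$,
\[
  \int_{t_0}^{t_1}\!|j^\e_r(t)|\,dt \le \frac{1}{\alpha\lambda}\int_{[0,T]}\!\C\big(\alpha j^\e_r(t)\big)\,dt+\frac{\C^*(\lambda)}{\alpha\lambda}(t_1-t_0),
\]
so that, on optimising over $\lambda>0$ after summing over $r\in\Rslow$ (finite), the right-hand side is bounded by a non-decreasing function $\omega(t_1-t_0)$ that vanishes as $t_1-t_0\to 0$. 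Since fluxes are nonnegative, $\sum_r j^\e_r([t_0,t_1])=\sum_r\int_{t_0}^{t_1}\!j^\e_r\,dt\le \omega(t_1-t_0)$, which is \eqref{est:jslow}.

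The only delicate point is Step~1: getting the scale-invariant Young-type inequality so that the ``reference'' $b$ enters only through $\C(2\alpha b)$, which stays bounded once $b$ is bounded. Everything else is a routine consequence of the $L^\infty$ bounds on the $\V_0$-densities from Lemma~\ref{l:bdd-densities} and the $\Delta_2$/superlinearity properties of $\C$.
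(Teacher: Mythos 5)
Your proof is correct, and the way you establish the $L^\C$ bound is genuinely different from the paper. The paper fixes a single reference level $Z:=(C+e-1)\sum_{r\in\Rslow}\kappa_r$, uses that $b\mapsto s(j\mid b)-b$ is non-increasing (so that $s(j\mid b)\geq s(j\mid Z)-Z$ whenever $b\leq Z$), and then applies~\eqref{eq:C bdd by s} to the \emph{centered} quantity $(j^\e_r-Z)/Z$, obtaining an $L^\C$ bound on $j^\e_\Rslow - Z$ and hence on $j^\e_\Rslow$. You instead keep the variable reference $b^\e_r(t)=\kappa_r\pi^\e_{r_-}u^\e_{r_-}(t)$ and derive a Young-type splitting $\C(\alpha j)\leq \alpha s(j\mid b)+\tfrac12\C(2\alpha b)$ from the evenness and convexity of $\C$ together with~\eqref{eq:C linear scaling} and~\eqref{eq:C bdd by s}; the only input from the density bounds is $\sup_t b^\e_r(t)\leq M$, which again comes from~\eqref{eq:Linf bound}. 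Your route is slightly more self-contained in that it does not use the monotonicity of $s(j\mid b)-b$ in $b$; the price is the extra case $2\alpha b\leq 1$, which you handle by fixing $\alpha=1/(2M)$. Both approaches then pass through~\eqref{eq:est-Orlicz} to get the Orlicz-norm bound. For the modulus $\omega$, your argument via pointwise Young/Fenchel duality $ab\leq\C(a)+\C^*(b)$ is, modulo presentation, the same as the paper's use of an indicator test function in the duality~\eqref{eq:C Orlicz norm}; both rely on $\C^*$ being finite everywhere. One small remark: your treatment of the $b=0$ case is fine, but you should note that finiteness of $\tilde\J^\e_\slow$ forces $j^\e_r(t)\geq 0$ a.e.\ (since $s(a\mid b)=\infty$ for $a<0$), which you use implicitly when identifying $\sum_r j_r^\e([t_0,t_1])$ with $\sum_r\int_{t_0}^{t_1}|j^\e_r|\,dt$.
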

\begin{proof}
Again by~\eqref{eq:flux RF} we know that $j^\epsilon_\Rslow$ and $\rho^\epsilon_{\V_0}$ both have $L^1$-densities. Writing $Z:=(C+e-1)\sum_{r\in\Rslow} \kappa_r$,
\begin{align}
  C \;&\!\!\stackrel{\eqref{eq:flux RF rescaled}}{\geq} \sum_{r\in\Rslow} \int_{[0,T]}\! \big\lbrack \underbrace{s\big( j_r^\epsilon(t) \mid \pi^\epsilon_{r_-}u^\epsilon_{r_-}\kappa_r) \big) - \pi^\epsilon_{r_-}u^\epsilon_{r_-}\!(t)\kappa_r}_{\text{non-increasing in } \pi^\epsilon_{r_-}u^\epsilon_{r_-}\!(t)\kappa_r} + \underbrace{\pi^\epsilon_{r_-}u^\epsilon_{r_-}\!(t)\kappa_r}_{\geq0}\big\rbrack\,dt \notag\\
    &\leftstackrel{\eqref{eq:Linf bound}}{\geq} \sum_{r\in\Rslow} \int_{[0,T]}\!\big\lbrack s\big(j_r^\epsilon(t) \mid Z \big) - Z\big\rbrack\,dt 
    \notag\\
    &\leftstackrel{\eqref{eq:C bdd by s}}{\geq}
    Z \sum_{r\in\Rslow} \int_{[0,T]}\!\C\big( \mfrac{j^\epsilon_r(t) - Z}{Z} \big)\,dt - Z \lvert\Rslow\rvert T \notag\\
    &\leftstackrel{\eqref{eq:est-Orlicz}}{\geq} \lVert j^\epsilon - Z \rVert_{L^\C([0,T];\RR^{\Rslow})} - Z (\lvert\Rslow\rvert T+1). \notag
\end{align}

The proof of estimate~\eqref{est:jslow} follows from the definition~\eqref{eq:C Orlicz norm} of the Orlicz norm and the superlinearity of $\C$. Define the function
\[
\omega:[0,\infty)\to[0,\infty), \qquad
\omega(\sigma):= \inf_{\beta>0}  \Bigl\{ \frac{\wt C }\beta:\  |\Rslow|\,\C^*(\beta) \leq \frac 1\sigma \Bigr\},
\]
where $\wt C$ is the bound on $j^\epsilon_{\Rslow}$  in $L^\C([0,T];\RR^{\Rslow}_+)$. The function $\omega$ is non-decreasing by construction, and $\lim_{\sigma\downarrow 0} \omega(\sigma) = 0$ because  $\C^*$ is finite on all of $\RR$.

Fix $0\leq t_0\leq t_1\leq T$ and take $\beta>0$ such that $(t_1-t_0)|\Rslow|\,\C^*(\beta)\leq 1$. Set 
\[
\zeta(\hat t) := 
\begin{cases}
  \mathds1_{[t_0,t_1]}(\hat t) &\text{if } r\in \Rslow,\\
0 & \text{otherwise},
\end{cases}
\]
and use this function $\zeta$ in~\eqref{eq:C Orlicz norm} to estimate,
\[
\sum_{r\in\Rslow} j_r^\e[t_0,t_1] 
= 
\sum_{r\in\Rslow} \frac1\beta \int_{[0,T]} j_r^\e(\hat t)\zeta(\hat t) \, d\hat t
\leq 
\frac1\beta \, \|j^\epsilon_{\Rslow}\|_{L^\C([0,T];\RR^{\Rslow})}
\leq 
\frac{\wt C}\beta .
\]
The estimate~\eqref{est:jslow} follows from taking the infimum over $\beta$. 
\end{proof}

Although the form of the rate functional is almost the same for the slow and damped fluxes, the damped fluxes lack an $C(\lbrack 0,T\rbrack)$-bound on the corresponding densities. Therefore we obtain a weaker bound on the damped fluxes:

\begin{lemma}[Boundedness of damped fluxes]
\label{l:bdd-damped-fluxes}
Let $(u^\e,j^\e)_{\epsilon>0}\subset\Theta$ such that
$\tilde\I^\epsilon_0\big(u^\e(0)\big) + \tilde\J^\epsilon(u^\e,j^\e)\leq C$
for some $C>0$. Then the damped fluxes $j^\e_{\Rdamped}$ are uniformly bounded in $L^1([0,T];\RR^{\Rdamped})$.
In addition, for all $\sigma>0$, 
\begin{equation}
\label{est:weak-AA}
\limsup_{\e\to0}\; \sup_{|t_1-t_0|<\sigma} \;\sum_{\substack{r\in \Rdamped:\\ r_+\in \V_{0}}} j_r^\e[t_0,t_1] \leq    \omega(\sigma),
\end{equation}
where $\omega$ is the modulus of continuity of Lemma~\ref{lem:bound slow fluxes}.
\end{lemma}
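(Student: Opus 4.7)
\textbf{Part 1 ($L^1$--bound).} The bound comes from a direct linearisation. Applying~\eqref{eq:s linearisation} with $\alpha=e$ gives the pointwise inequality $a\leq s(a\mid b)+(e-1)b$. Set $a=j^\e_r(t)$ and $b=\tfrac{1}{\e}\kappa_r\pi^\e_{r_-}u^\e_{r_-}(t)$ for $r\in\Rdamped$; integrating yields
\[
  \int_{[0,T]} j^\e_r(t)\,dt\;\leq\; \int_{[0,T]} s\big(j_r^\e(t)\mid \tfrac1\e\kappa_r\pi^\e_{r_-}u^\e_{r_-}(t)\big)\,dt \;+\; (e-1)\kappa_r\int_{[0,T]} \tfrac1\e\pi^\e_{r_-}u^\e_{r_-}(t)\,dt.
\]
The first term is $\leq\tilde\J^\e_\damped(u^\e,j^\e)\leq C$. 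For the second, $\tfrac1\e\pi^\e_{r_-}\to\tilde\pi_{r_-}$ by~\eqref{eq:pi convergence}, and $u^\e_{r_-}$ is uniformly bounded in $L^1([0,T])$ by Lemma~\ref{l:bdd-densities}(2). This gives uniform boundedness of $j^\e_{\Rdamped}$ in $L^1$.

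\textbf{Part 2 (continuity-equation identity for $\V_0$-ending fluxes).} Introduce the total mass in $\V_0$,
\[
  M^\e(t)\;:=\;\sum_{x\in\V_{0\slow}}\pi^\e_x u^\e_x(t)\;+\;\sum_{\fc\in\fC}\pi^\e_\fc u^\e_\fc(t)\;\stackrel{\eqref{eq:cont eq sum density eps}}{=}\;\sum_{x\in\V_0}\pi^\e_x u^\e_x(t).
\]
I sum the mild continuity equations~\eqref{eq:cont eq V0s}--\eqref{eq:cont eq V0f} over $x\in\V_0$. Any edge with both endpoints in $\V_0$ contributes with opposite signs and cancels; in particular all fast-cycle fluxes cancel, since by Lemma~\ref{lem:fast cycles} every $r\in\Rfcycle$ has both endpoints in $\V_{0\fcycle}\subset\V_0$. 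What survives are the edges crossing between $\V_0$ and $\V_1$, giving
\begin{align*}
  M^\e(t_1)-M^\e(t_0) &= \sum_{\substack{r\in\Rslow\cup\Rdamped\\ r_-\in\V_1,\ r_+\in\V_0}}\!\! j^\e_r[t_0,t_1] \;-\; \sum_{\substack{r\in\Rslow\\ r_-\in\V_0,\ r_+\in\V_1}}\!\! j^\e_r[t_0,t_1],
\end{align*}
where the second sum carries only slow edges by Lemma~\ref{lem:node categorisation}(i)--(ii). Rearranging, and recalling that every $r\in\Rdamped$ has $r_-\in\V_1$,
\[
  \sum_{\substack{r\in\Rdamped\\ r_+\in\V_0}} j^\e_r[t_0,t_1] \;=\; \bigl(M^\e(t_1)-M^\e(t_0)\bigr) \;+\; \sum_{\substack{r\in\Rslow\\ r_-\in\V_0,\,r_+\in\V_1}} j^\e_r[t_0,t_1] \;-\; \sum_{\substack{r\in\Rslow\\ r_-\in\V_1,\,r_+\in\V_0}} j^\e_r[t_0,t_1].
\]

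\textbf{Part 3 (vanishing of the $M^\e$-variation).} Conservation of total mass $\sum_{x\in\V}\pi^\e_x u^\e_x(t)\equiv\text{const}$, which follows from the full continuity equation, implies
\[
  |M^\e(t_1)-M^\e(t_0)| \;=\; \Bigl|\sum_{x\in\V_1}\pi^\e_x\bigl(u^\e_x(t_1)-u^\e_x(t_0)\bigr)\Bigr|
  \;\leq\; 2\sum_{x\in\V_1}\pi^\e_x\,\|u^\e_x\|_{C([0,T])}.
\]
Since $\pi^\e_x=\bigoh(\e)$ on $\V_1$ and $\e\|u^\e_x\|_{C([0,T])}\to 0$ by Lemma~\ref{l:bdd-densities}(\ref{l:e:eps-u-V1-to-zero}), this term tends to $0$ uniformly in $t_0,t_1$. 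The two remaining slow-flux sums are sub-sums of the non-negative total slow flux and are therefore controlled by~\eqref{est:jslow} of Lemma~\ref{lem:bound slow fluxes}. Dropping the negative slow term and absorbing the factor $2$ into (a re-definition of) $\omega$ yields~\eqref{est:weak-AA}.

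\textbf{Main obstacle.} The only delicate point is the combinatorial bookkeeping in Part~2: one must verify that, after summing over $\V_0$, fast-cycle fluxes cancel completely (requiring Lemma~\ref{lem:fast cycles}) and that the remaining crossings $\V_0\leftrightarrow\V_1$ involve only slow and damped edges (requiring Lemma~\ref{lem:node categorisation}, which rules out leak edges). Once this is established, the estimate is a direct consequence of mass conservation plus the slow-flux modulus.
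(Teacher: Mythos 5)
Your proof is correct, and Part 1 is essentially identical to the paper's argument: linearise $s(\cdot\mid\cdot)$ via~\eqref{eq:s linearisation}, bound the relative term by the $L^1$-control on $u^\e_{\V_1}$ from Lemma~\ref{l:bdd-densities} and the convergence $\tfrac1\e\pi^\e_{r_-}\to\tilde\pi_{r_-}$.

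For Part 2--3 you take a slightly different combinatorial route. The paper sums the mild continuity equations~\eqref{eq:cont eq V1} over $x\in\V_1$ directly: there the $1/\e$ and $1/\sqrt\e$ terms never appear (since fast-cycle edges never touch $\V_1$), one only has to cancel the internal $\V_1$-to-$\V_1$ damped terms, and the boundary term $\sum_{x\in\V_1}\pi^\e_x(u^\e_x(t_1)-u^\e_x(t_0))$ emerges immediately. You instead sum over $\V_0$, which forces you to verify (correctly, via Lemma~\ref{lem:fast cycles}) that all fast-cycle contributions cancel, and then to invoke mass conservation to turn $M^\e(t_1)-M^\e(t_0)$ into the same $\V_1$-boundary term. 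The two computations are equivalent by mass conservation, but the paper's choice of region is the one in which the $\e$-singular terms are already absent; yours requires the extra cancellation step. Two small blemishes in your write-up: the surviving incoming sum over $\{r\in\Rslow: r_-\in\V_1, r_+\in\V_0\}$ is empty by Lemma~\ref{lem:node categorisation}(iii) (no slow edges leave $\V_1$ once leak edges are excluded), so carrying it along and then ``dropping the negative slow term'' is harmless but redundant; and the parenthetical about ``absorbing the factor 2 into $\omega$'' is unnecessary, since the $|M^\e(t_1)-M^\e(t_0)|$ contribution tends to zero uniformly in $(t_0,t_1)$ as $\e\to0$ and therefore disappears entirely under the $\limsup$, leaving $\omega$ unchanged.
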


\begin{proof}
Again by~\eqref{eq:flux RF rescaled} we can assume that $u^\epsilon_{\V_1}$ and $j^\epsilon_\Rdamped$ have $L^1$-densities, at least for $\epsilon>0$. This allows us to write
\begin{align*}
  C &\stackrel{\eqref{eq:flux RF rescaled}}{\geq} \sum_{r\in\Rdamped} \int_{[0,T]}\!s\big(j^\epsilon_r(t)\mid \tfrac{1}{\epsilon}\kappa_r\pi^\epsilon_{r_-}\!u^\epsilon_{r_-}\!(t)\big)\,dt \\
    &\stackrel{\eqref{eq:s linearisation}}{\geq}  \sum_{r\in\Rdamped} \int_{[0,T]}\!\big((1-e)\tfrac{1}{\epsilon}\kappa_r\pi^\epsilon_{r_-}\!u^\epsilon_{r_-} + j^\epsilon_r(t)\big)\,dt,
\end{align*}
and so $\lVert j^\epsilon_\Rdamped\rVert_{L^1([0,T];\Rdamped)} \leq C +(e-1) \lVert u^\epsilon_{\V_1}\rVert_{L^1([0,T];\RR^{\V_1}_+)}\sup_{\epsilon>0,r\in\Rdamped}\tfrac1\epsilon\kappa_r\pi^\epsilon_{r-}$, which is uniformly bounded by Lemma~\ref{l:bdd-densities} and the assumption $\tfrac1\epsilon\pi^\epsilon_{r_-}\to\tilde\pi_{r_-}$.

Next we prove the estimate~\eqref{est:weak-AA} by summing the mild formulation of the continuity equations~\eqref{eq:cont eq V1} over all $x\in \V_1$, for arbitrary $0\leq t_0\leq t_1\leq T$:
\[
\sum_{\substack{r\in\Rdamped:\\r_-\in \V_1}} j_r^\e[t_0,t_1] 
\;-
\sum_{\substack{r\in\Rdamped:\\r_+\in \V_1}} j_r^\e[t_0,t_1] 
\;=  \sum_{\substack{r\in\Rslow:\\r_+\in \V_1}} j_r^\e[t_0,t_1] 
  - \sum_{x\in\V_1} \pi_x^\e \bigl(u_x^\e(t_1)-u_x^\e(t_0)\bigr).
\]
Since the first two sums have common terms corresponding to $r_-,r_+\in \V_1$, we can remove them to find
\[
\sum_{\substack{r\in\Rdamped:\\r_-\in \V_1\\r_+\in \V_0}} j_r^\e[t_1,t_0] 
\;- 
\sum_{\substack{r\in\Rdamped:\\r_-\in \V_0\\r_+\in \V_1}} j_r^\e[t_1,t_0] 
\;= \sum_{\substack{r\in\Rslow:\\r_+\in \V_1}} j_r^\e[t_1,t_0] 
  - \sum_{x\in\V_1} \pi_x^\e \bigl(u_x^\e(t_1)-u_x^\e(t_0)\bigr).
\]
The second sum is a sum over the empty set, and applying the estimate~\eqref{est:jslow}  we find 
\begin{equation*}
\sum_{\substack{r\in\Rdamped:\\r_-\in \V_1\\r_+\in \V_0}} j_r^\e[t_1,t_0] 
\;\leq 
\omega(t_1-t_0) + \sum_{x\in \V_1} \pi_x^\e \|u_x^\e\|_{C(\lbrack0,T\rbrack}.
\end{equation*}
The estimate~\eqref{est:weak-AA} then follows from part~\ref{l:e:eps-u-V1-to-zero} of Lemma~\ref{l:bdd-densities} together with $\tfrac1\epsilon\pi^\epsilon_{r_-}\to\tilde\pi_{r_-}$.
\end{proof}

\begin{lemma}[Boundedness of fast fluxes]
Let $(u^\e,j^\e)_{\epsilon>0}\subset\Theta$ such that
$\tilde\I^\epsilon_0\big(u^\e(0)\big) + \tilde\J^\epsilon(u^\e,j^\e)\leq C$
for some $C>0$. Then the fast cycle fluxes $\tjeps_{\Rfcycle}$ are uniformly bounded in $L^\C([0,T];\RR^{\Rfcycle})$.
\label{lem:bound fast fluxes}
\end{lemma}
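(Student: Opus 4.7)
My plan is to establish a pointwise lower bound of the form $s(\cdots)\geq M^{-1}\,\C(\tjeps_r)$ on each summand of $\tilde\J^\e_\fcycle$, integrate and sum, and then invoke \eqref{eq:est-Orlicz} with $a=1$ to deduce $\|\tjeps_\Rfcycle\|_{L^\C}\leq 1+MC$ uniformly for $\e$ sufficiently small.

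The starting point will be \eqref{eq:C bdd by s}, applied with $\lambda_r^\e:=\tfrac1\e\kappa_r\pi^\e_{r_-}u^\e_{r_-}$ in the role of the reference. Writing $A_r^\e:=\kappa_r\pi^\e_{r_-}u^\e_{r_-}$, this yields
\begin{equation*}
s\Big(\lambda_r^\e + \tfrac{\tjeps_r}{\sqrt\e} \Bigm| \lambda_r^\e\Big)
\;\geq\; \lambda_r^\e\,\C\Big(\tfrac{\tjeps_r}{\sqrt\e\,\lambda_r^\e}\Big)
\;=\; \tfrac{A_r^\e}{\e}\,\C\Big(\tfrac{\sqrt\e\,\tjeps_r}{A_r^\e}\Big).
\end{equation*}
By Lemma~\ref{l:bdd-densities} combined with $\pi_{r_-}^\e\to\pi_{r_-}>0$ (note $r_-\in\V_{0\fcycle}$), the quantity $A_r^\e$ is uniformly bounded by some $M<\infty$, independently of $\e$ and $t$. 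Where $A_r^\e$ vanishes, finiteness of $s(\,\cdot\mid 0)$ forces $\tjeps_r=0$, so that set contributes nothing to either side of the intended inequality, and one may assume $A_r^\e>0$ throughout.

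The key step is to compare $(A_r^\e/\e)\,\C(\sqrt\e\,\tjeps_r/A_r^\e)$ with $\C(\tjeps_r)$ by case-splitting on whether $\delta:=\sqrt\e/A_r^\e$ is $\leq 1$ or $>1$. If $\delta\leq 1$, the quadratic scaling \eqref{eq:C sqrt scaling} yields $\C(\delta\,\tjeps_r)\geq \delta^2\,\C(\tjeps_r)$, so that $(A_r^\e/\e)\,\C(\delta\,\tjeps_r)\geq \C(\tjeps_r)/A_r^\e\geq M^{-1}\C(\tjeps_r)$. If $\delta>1$, convexity together with $\C(0)=0$ yields $\C(\delta\,\tjeps_r)\geq \delta\,\C(\tjeps_r)$, so that $(A_r^\e/\e)\,\C(\delta\,\tjeps_r)\geq \C(\tjeps_r)/\sqrt\e\geq M^{-1}\C(\tjeps_r)$ provided $\e\leq M^2$. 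Integrating in time, summing over $r\in\Rfcycle$, and using the assumed bound $\tilde\J^\e_\fcycle\leq C$ then gives $\sum_r \int_{[0,T]} \C(\tjeps_r(t))\,dt \leq M C$, and \eqref{eq:est-Orlicz} with $a=1$ closes the argument.

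The main obstacle I foresee is precisely this scaling trade-off: the prefactor $1/\e$ and the rescaled flux $\tjeps_r=\sqrt\e\,(j_r-\kappa_r^\e\rho_{r_-}^\e)$ combine inside the perspective-like expression $(A_r^\e/\e)\,\C(\sqrt\e\,\tjeps_r/A_r^\e)$ in such a way that the quadratic scaling \eqref{eq:C sqrt scaling} of $\C$ is directly applicable only in the ``not too small density'' regime $A_r^\e\geq\sqrt\e$, while the complementary regime must be handled via the weaker linear scaling coming from convexity of $\C$—a bound that is nevertheless adequate because $1/\sqrt\e\geq 1/M$ for small enough $\e$. This asymmetry is what ultimately selects the exponent $1/\sqrt\e$ appearing in the definition of $\tjeps_r$.
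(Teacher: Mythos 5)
Your proof is correct and follows essentially the same route as the paper: start from \eqref{eq:C bdd by s}, exploit the uniform $L^\infty$-bound on $u^\e_{\V_{0\fcycle}}$ (equivalently, on $A_r^\e$), control the resulting scaling of $\C$ via \eqref{eq:C sqrt scaling}, and close with \eqref{eq:est-Orlicz}. The only difference is cosmetic: the paper avoids your case split on $\sqrt\e/A_r^\e\lessgtr 1$ by first applying \eqref{eq:C sqrt scaling} with $\delta=\sqrt\e$ (always $\leq1$) to strip out the $1/\e$ prefactor, and then applying \eqref{eq:C linear scaling} with $\delta=A_r^\e/Z\leq1$ to normalize by a uniform constant $Z$, but the two arguments are equivalent in substance.
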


\begin{proof} Similar to the proof of Lemma~\ref{lem:bound slow fluxes} we write $Z:=(C+e-1)\sum_{r\in\Rfcycle}\kappa_r$, so that $\kappa_r \pi^\epsilon_{r_-} u^\epsilon_{r_-}(t)/Z\leq 1$ for each $r\in\Rfcycle$ due to the total mass estimate~\eqref{eq:Linf bound}. Again using the existence of $L^1$-densities:
\begin{align*}
  C \;&\!\!\stackrel{\eqref{eq:flux RF rescaled}}{\geq}
    \sum_{r\in\Rfcycle} \int_{[0,T]}\!s\Big(\tfrac{1}{\epsilon}\kappa_r \pi^\epsilon_{r_-} u^\epsilon_{r_-}\!(t) + \mfrac{1}{\sqrt{\epsilon}}\tjeps_r(t)  \Bigm\vert \tfrac1\epsilon\kappa_r\pi^\epsilon_{r_-}u^\epsilon_{r_-}\!(t)\Big)\,dt \\
    &\leftstackrel{\eqref{eq:C bdd by s}}{\geq}
    \sum_{r\in\Rfcycle} \int_{[0,T]}\! \tfrac1\epsilon\kappa_r\pi^\epsilon_{r_-}u^\epsilon_{r_-}\!(t)\,\C\!\left( \mfrac{ \tfrac{1}{\sqrt{\epsilon}}\tjeps_r(t) }{ \tfrac1\epsilon\kappa_r\pi^\epsilon_{r_-}u^\epsilon_{r_-}\!(t)}\right)\,dt \\
    &\leftstackrel{\eqref{eq:C sqrt scaling}}{\geq}
    \sum_{r\in\Rfcycle} \int_{[0,T]}\! Z\frac{\kappa_r\pi^\epsilon_{r_-} u^\epsilon_{r_-}\!(t)}{Z}\,\C\!\left( \mfrac{ \tjeps_r(t)/Z }{ \kappa_r\pi^\epsilon_{r_-}u^\epsilon_{r_-}\!(t)/Z}\right)\,dt \\
    &\leftstackrel{\eqref{eq:C linear scaling}}{\geq}
    \sum_{r\in\Rfcycle} \int_{[0,T]}\! Z\,\C\!\left( \mfrac{ \tjeps_r(t) }{ Z}\right)\,dt \\
    &\leftstackrel{\eqref{eq:est-Orlicz}}{\geq} 
    \lVert \tjeps \rVert_{L^\C([0,T];\RR^\Rfcycle)} - Z.
\end{align*}

\end{proof}

%
%
%
%
%

\begin{lemma}
[Equicontinuity of $u^\e_{\V_{0\slow}}$ and $u^\e_\fC$.]
\label{l:equicontinuity}
Let $(u^\e_{\V_0},j^\e)_{\e>0}\subset\Theta$ such that
$\tilde\I^\e_0\big(u^\e(0)\big) + \tilde\J^\epsilon(u^\e,j^\e)\leq C$
for some $C>0$. Then there exists a continuous non-decreasing function $\overline \omega:[0,\infty)\to[0,\infty)$ with $\lim_{\sigma\downarrow0}\overline\omega(\sigma)=0$ such that for all $0\leq t_0\leq t_1\leq T$,
\begin{equation}
\label{est:eq-ct-total}
\limsup_{\e\to0} \sum_{\fc\in \fC} |u_\fc^\e(t_1) - u_\fc^\e(t_0)| + \sum_{x\in \V_{0\slow}} |u_x^\e(t_1) - u_x^\e(t_0)| \leq \overline\omega(t_1-t_0).
\end{equation}
\end{lemma}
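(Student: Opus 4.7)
The plan is to control increments of $u_\fc^\e$ and $u_x^\e$ (for $x\in\V_{0\slow}$) via the mild continuity equations of Corollary~\ref{cor:rescaled-e-cont-eq}, and then bound the right-hand sides using the modulus $\omega$ provided by Lemmas~\ref{lem:bound slow fluxes} and~\ref{l:bdd-damped-fluxes}.

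First, for $x\in\V_{0\slow}$, the mild form of~\eqref{eq:cont eq V0s} gives
\[
  \pi^\epsilon_x\bigl(u^\epsilon_x(t_1)-u^\epsilon_x(t_0)\bigr)
    =\!\!\sum_{\substack{r\in\Rslow\\r_+=x}}\!\!j^\epsilon_r[t_0,t_1]
    \;+\!\!\sum_{\substack{r\in\Rdamped\\r_+=x}}\!\!j^\epsilon_r[t_0,t_1]
    \;-\!\!\sum_{\substack{r\in\Rslow\\r_-=x}}\!\!j^\epsilon_r[t_0,t_1].
\]
Since $\pi^\epsilon_x\to\pi_x>0$, and since each of the three sums is controlled as $\e\to0$ by $\omega(t_1-t_0)$ (the slow sums by Lemma~\ref{lem:bound slow fluxes}, the damped sum by~\eqref{est:weak-AA}, noting $r_+=x\in\V_0$), this already provides the desired equicontinuity for the $\V_{0\slow}$-component.

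Secondly, for $\fc\in\fC$ I would sum~\eqref{eq:cont eq V0f} over $x\in\fc$ and use~\eqref{eq:cont eq sum density eps} on the left-hand side to get
\[
  \pi^\epsilon_\fc\bigl(u^\epsilon_\fc(t_1)-u^\epsilon_\fc(t_0)\bigr)
   =\sum_{x\in\fc}\!\Bigl[\sum_{\substack{r\in\Rslow\\r_+=x}}\!\!j^\epsilon_r[t_0,t_1]+\!\!\sum_{\substack{r\in\Rdamped\\r_+=x}}\!\!j^\epsilon_r[t_0,t_1]-\!\!\sum_{\substack{r\in\Rslow\\r_-=x}}\!\!j^\epsilon_r[t_0,t_1]\Bigr]+(\text{fast-cycle terms}).
\]
The crucial observation is that the fast-cycle contribution vanishes: by Lemma~\ref{lem:fast cycles}, every $r\in\Rfcycle$ with $r_-\in\fc$ is part of a fast cycle entirely contained in $\fc$ (so $r_+\in\fc$ as well), and the telescoping over $x\in\fc$ cancels both the $\tfrac1\e\kappa_r\pi^\e_{r_-}u^\e_{r_-}$ and the $\tfrac1{\sqrt\e}\tjeps_r$ contributions. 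Since $\pi^\e_\fc=\sum_{x\in\fc}\pi^\e_x\to\pi_\fc>0$, the same application of Lemma~\ref{lem:bound slow fluxes} and~\eqref{est:weak-AA} then yields the increment estimate for $u^\e_\fc$.

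Putting the two cases together and summing over $\fc\in\fC$ and $x\in\V_{0\slow}$, I obtain
\[
  \limsup_{\e\to0}\Bigl[\sum_{\fc\in\fC}|u^\e_\fc(t_1)-u^\e_\fc(t_0)|+\sum_{x\in\V_{0\slow}}|u^\e_x(t_1)-u^\e_x(t_0)|\Bigr]
  \leq M\,\omega(t_1-t_0),
\]
for some combinatorial constant $M$ depending only on the network and on $\min\{\pi_x,\pi_\fc\}$. To conclude I would set $\overline\omega$ to be any continuous non-decreasing majorant of $\sigma\mapsto M\omega(\sigma)$ with $\overline\omega(\sigma)\to0$ as $\sigma\downarrow 0$ (e.g.\ via convolution or via $\sigma\mapsto \sup_{0<\tau\le\sigma} M\omega(\tau)$ followed by a standard continuous regularization), which is possible since $\omega(\sigma)\to0$. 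The main technical point, and really the only non-routine step, is the cancellation of the fast-cycle fluxes inside each $\fc$; once Lemma~\ref{lem:fast cycles} is in hand, that cancellation is immediate from the telescoping structure.
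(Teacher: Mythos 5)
Your proposal matches the paper's proof essentially step by step: the mild continuity equations, the two-sided bound using the nonnegativity of the fluxes, the moduli of Lemmas~\ref{lem:bound slow fluxes} and~\ref{l:bdd-damped-fluxes}, and the telescoping cancellation of fast-cycle terms within each $\fc$ via Lemma~\ref{lem:fast cycles}. The only thing you make slightly more explicit than the paper is the passage from the non-decreasing $\omega$ to a continuous majorant $\overline\omega$, which is a routine and correct addition.
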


\begin{proof}
Fix $0\leq t_0\leq t_1 \leq T$. 
Take $x\in \V_{0\slow}$ and note that by~\eqref{eq:cont eq V0s} and~\eqref{est:jslow}
\[
\pi_x^\e\bigl(u_x^\e(t_1)-u_x^\e(t_0)\bigr)
\geq -\sum_{ \substack{r\in\Rslow:\\r_-=x}} j_r^\e[t_1,t_0] \geq -\omega(t_1-t_0),
\]
where we again used the mild formulation of the continuity equations. To estimate the difference from the other side we write
\begin{align*}
\pi_x^\e\bigl(u_x^\e(t_1)-u_x^\e(t_0)\bigr)
\;&\leq\; 
\sum_{ \substack{r\in\Rslow:\\r_+=x}} j_r^\e[t_0,t_1]
+ \sum_{ \substack{r\in\Rdamped:\\r_+=x}} j_r^\e[t_0,t_1]
\;\leq \;\omega(t-s) + \sum_{ \substack{r\in\Rdamped:\\r_+=x}} j_r^\e[t_0,t_1], 
\end{align*}
and by~\eqref{est:weak-AA} one part of~\eqref{est:eq-ct-total} follows. 

The same line of reasoning leads to a corresponding statement about $|u_\fc^\e(t_1) - u_\fc^\e(t_0)|$ for any $\fc\in \fC$, after one sums the continuity equations~\eqref{eq:cont eq V0f} over all $x\in \fc$ to find
\[
\sum_{x\in \fc} \pi_x^\e \bigl(u_x^\e(t_1)-u_x^\e(t_0)\bigr) 
= \sum_{ \substack{r\in\Rslow:\\r_+\in \fc}} j_r^\e[t_0,t_1]
+ \sum_{ \substack{r\in\Rdamped:\\r_+\in \fc}} j_r^\e[t_0,t_1]
- \sum_{ \substack{r\in\Rslow:\\r_-\in \fc}} j_r^\e[t_0,t_1].
\]
We omit the details.
\end{proof}

\subsection{Compactness of densities and fluxes}
\label{subsec:compactness}

In this brief section we derive the compactness of level sets, and hence the equicoercivity of Theorem~\ref{th:equicoercivity}.

\begin{corollary} 
\label{c:compactness}
Let $(u^\e,j^\e)_{\epsilon>0}\subset\Theta$ such that
$\tilde\I^\epsilon_0\big(u^\e(0)\big) + \tilde\J^\e(u^\e,j^\e)\leq C$
for some $C>0$. Then one can choose a sequence $\e_n\to 0$ and a limit point $(u,j)\in\Theta$ such that 
\begin{subequations}
\label{eq:conv-subseq}
\begin{align}
  u^{\e_n}_{\V_{0\slow}} \longrightarrow u_{\V_{0\slow}}      &\qquad\text{in } C([0,T];\RR^{\V_{0\slow}}),
  \label{eq:conv-subseq-Vslow}
\\
  u^{\e_n}_{\V_{0\fcycle}} \weakstarto u_{\V_{0\fcycle}}            &\qquad\text{in } L^\infty([0,T];\RR^{\V_{0\fcycle}}),
  \label{eq:conv-subseq-V0f}
\\
  u^{\e_n}_{\fC} \longrightarrow u_{\fC}            &\qquad\text{in } C([0,T];\RR^{\fC}), 
  \label{eq:conv-subseq-fc}
\\
  u^{\e_n}_{\V_1} \narrowto u_{\V_1}               &\qquad\text{in } \M([0,T];\RR^{\V_1}),
  \label{eq:conv-subseq-V1}
\\
  \e \,u_{\V_1}^\e \longrightarrow 0 & \qquad \text{in }C([0,T];\RR^{\V_1}),
  \label{eq:conv-subseq-epsV1}
\\
  j^{\e_n}_{\Rslow} \weakstarto j_{\Rslow}  &\qquad\text{in } L^\C([0,T];\RR^\Rslow),  \label{eq:conv-subseq-jslow}
\\
  j^{\e_n}_{\Rdamped} \narrowto j_{\Rdamped} &\qquad\text{in } \M([0,T];\RR^\Rdamped),
  \label{eq:conv-subseq-jdamped}
\\
  \tilde\jmath^{\hspace{0.08em}\e_n}_{\Rfcycle} \weakstarto \tilde\jmath_{\Rfcycle}
                                                  &\qquad\text{in } L^\C([0,T];\RR^\Rfcycle).
\label{eq:conv-subseq-fcyc}
\end{align}
\end{subequations}
It follows that  $u_{\V_{0\slow}}$ and $u_\fC$ are continuous. 
\label{cor:compactness}
\end{corollary}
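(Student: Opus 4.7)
The plan is to extract a subsequence component by component, applying an appropriate compactness theorem in each of the seven factors of $\Theta$. The bounds proved in Section~\ref{subsec:estimates} have been designed with exactly this purpose in mind, so no delicate estimate is required; the main task is bookkeeping to glue the sub-extractions into a single diagonal subsequence and to verify that the limit point lies in $\Theta$.

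For the uniform convergences \eqref{eq:conv-subseq-Vslow} and \eqref{eq:conv-subseq-fc} I would combine the $L^\infty$-bound from Lemma~\ref{l:bdd-densities}(1) with the equicontinuity estimate \eqref{est:eq-ct-total} of Lemma~\ref{l:equicontinuity}. The latter is only an asymptotic statement in $\e$, but this is enough for Arzelà--Ascoli: given $\delta>0$, one chooses $\sigma>0$ with $\overline\omega(\sigma)<\delta/2$ and $\e_0>0$ such that the sum in \eqref{est:eq-ct-total} is below $\delta$ for $|t_1-t_0|<\sigma$ and $\e<\e_0$. The individual uniform continuity on $[0,T]$ of the finitely many $u^\e$ with $\e\ge\e_0$ allows $\sigma$ to be shrunk further to obtain equicontinuity of the whole tail family. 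Arzelà--Ascoli then produces a subsequence converging uniformly to a continuous limit, which simultaneously establishes the continuity claim at the end of the corollary; the same argument covers $u^\e_\fC$.

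The remaining convergences are standard weak-$*$ compactness statements. The $L^\infty$-bound on $u^\e_{\V_{0\fcycle}}$ gives \eqref{eq:conv-subseq-V0f} by Banach--Alaoglu in $L^\infty=(L^1)^*$. The vanishing \eqref{eq:conv-subseq-epsV1} is just part~(3) of Lemma~\ref{l:bdd-densities}. The uniform $L^1$-bounds on $u^\e_{\V_1}$ and $j^\e_{\Rdamped}$ from Lemma~\ref{l:bdd-densities}(2) and Lemma~\ref{l:bdd-damped-fluxes} translate into uniform total-variation bounds, so Banach--Alaoglu applied to $\M([0,T])\simeq C([0,T])^*$ delivers the narrow convergences \eqref{eq:conv-subseq-V1} and \eqref{eq:conv-subseq-jdamped}. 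Finally, the $L^\C$-bounds on $j^\e_{\Rslow}$ and $\tilde\jmath^{\hspace{0.08em}\e}_{\Rfcycle}$ from Lemmas~\ref{lem:bound slow fluxes} and~\ref{lem:bound fast fluxes}, combined with the duality $(M^{\C^*})^*\simeq L^{\C}$ recalled in Section~\ref{subsec:Orlicz}, yield \eqref{eq:conv-subseq-jslow} and \eqref{eq:conv-subseq-fcyc}. A diagonal extraction produces a single sequence $\e_n\to0$ on which all eight convergences hold simultaneously.

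It remains to check that $(u,j)\in\Theta$, that is, that the implicit constraint \eqref{eq:sum density eps} relating $u_\fC$ and $u_{\V_{0\fcycle}}$ passes to the limit. This is immediate: for $x\in\V_{0\fcycle}$ one has $\pi^\e_x\to\pi_x>0$, so $\pi^\e_x u^\e_x\weakstarto\pi_x u_x$ in $L^\infty$; on the other side $\pi^\e_\fc u^\e_\fc\to\pi_\fc u_\fc$ uniformly. Passing to the limit in $\pi^\e_\fc u^\e_\fc=\sum_{x\in\fc}\pi^\e_x u^\e_x$ yields the same identity for the limits, holding almost everywhere and hence pointwise after choosing the continuous representative of $u_\fc$. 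There is no real obstacle in this proof; the closest thing to one is the asymptotic nature of~\eqref{est:eq-ct-total}, which is handled by the argument described above.
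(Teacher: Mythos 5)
Your proof is correct and follows essentially the same route as the paper: weak-\textasteriskcentered{} (resp.\ narrow) compactness in the six weaker components from the uniform bounds of Lemmas~\ref{l:bdd-densities}--\ref{lem:bound fast fluxes}, and Arzel\`a--Ascoli for $u^\e_{\V_{0\slow}}$ and $u^\e_\fC$ via Lemma~\ref{l:equicontinuity}. The only difference is presentational: the paper delegates the argument that asymptotic uniform equicontinuity suffices to the stand-alone Theorem~\ref{th:mod-AA} in the appendix, whereas you reproduce that short argument inline; you also explicitly check that the constraint~\eqref{eq:sum density eps} passes to the limit, which the paper leaves implicit.
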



\begin{proof}
The boundedness given by Lemmas~\ref{l:bdd-densities}, \ref{lem:bound slow fluxes}, and~\ref{l:bdd-damped-fluxes} immediately implies the weak-\textasteriskcentered\ and narrow compactness of~\eqref{eq:conv-subseq-V0f}, \eqref{eq:conv-subseq-V1}, \eqref{eq:conv-subseq-epsV1}, \eqref{eq:conv-subseq-jslow}, \eqref{eq:conv-subseq-jdamped}, and~\eqref{eq:conv-subseq-fcyc}; we extract a subsequence that converges in this sense.  

The additional uniform convergences of~\eqref{eq:conv-subseq-Vslow} and~\eqref{eq:conv-subseq-fc} follow from an alternative version of the classical Arzel\`a-Ascoli theorem, which we state and prove in the appendix. This version applies to sequences that are uniformly bounded  and \emph{asymptotically uniformly equicontinuous}. The uniform boundedness of~$u^{\e_n}_{\V_{0\slow}}$ and $u^{\e_n}_\fC$ follow by Lemma~\ref{l:bdd-densities}, and the asymptotic uniform equicontinuity is the statement of Lemma~\ref{l:equicontinuity}.
The uniform convergences~\eqref{eq:conv-subseq-Vslow} and~\eqref{eq:conv-subseq-fc} then  follow by Theorem~\ref{th:mod-AA} (up to extraction of a subsequence).
\end{proof}
%
%

From now on we shall consider sequences that converge in the sense of \eqref{eq:conv-subseq}. 
%

\subsection{Equilibration on fast cycle components}

In this section we prove that all mass on fast cycles will instaneously spread over each node in the fast cycle component.

\begin{lemma}
Let $(u^\e,j^\e)_{\epsilon>0}\subset\Theta$ such that 
$\tilde\I^\epsilon_0\big(u^\e(0)\big) + \tilde\J^\epsilon(u^\e,j^\e)\leq C$ converge to $(u,j)$ in $\Theta$ in the sense of \eqref{eq:conv-subseq}. Then $u_x(t)\equiv u_\fc(t)$ on each component $\fc\in\fC$ and $\div\tilde\jmath_\Rfcycle=0$.
\label{lem:equilibration}
\end{lemma}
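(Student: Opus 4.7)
My plan has two parts, one for each assertion.

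\textbf{Part 1 (equilibration $u_x \equiv u_\fc$).} The starting point is the FIR inequality (Lemma~\ref{l:FIR}), which together with the hypothesis gives $\FI^{\e}(u^{\e}) \leq C'$ uniformly in $\e$. For any $r \in \Rfcycle$ the corresponding term reads
\[
\frac{\kappa_{r}}{\e}\pi_{r_{-}}^{\e}\int_{0}^{T}\bigl(\sqrt{u_{r_{-}}^{\e}(t)}-\sqrt{u_{r_{+}}^{\e}(t)}\bigr)^{2}\,dt \leq 2C',
\]
and since $\pi_{r_{-}}^{\e}\to\pi_{r_{-}}>0$ for $r_{-}\in\V_{0\fcycle}$, this gives $\|\sqrt{u_{r_{-}}^{\e}}-\sqrt{u_{r_{+}}^{\e}}\|_{L^{2}([0,T])}=O(\sqrt{\e})$. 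Combined with the uniform $L^{\infty}$-bound on $u_{\V_{0\fcycle}}^{\e}$ from Lemma~\ref{l:bdd-densities} and Cauchy--Schwarz, I obtain $\|u_{r_{-}}^{\e}-u_{r_{+}}^{\e}\|_{L^{1}}=O(\sqrt{\e})\to 0$. By Lemma~\ref{lem:fast cycles}, any two nodes $x,y\in\fc$ are joined by a directed path of $\Rfcycle$-edges; chaining the edge-wise estimate along this path yields $\|u_{x}^{\e}-u_{y}^{\e}\|_{L^{1}}\to 0$ for all $x,y\in\fc$. Passing to the weak-$\star$ limit $u_{\V_{0\fcycle}}^{\e}\weakstarto u_{\V_{0\fcycle}}$ in $L^{\infty}$ identifies $u_{x}(t)=u_{y}(t)$ a.e., and the constraint~\eqref{eq:sum density eps} $\pi_{\fc}u_{\fc}=\sum_{x\in\fc}\pi_{x}u_{x}$ pins the common value to $u_{\fc}$.

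\textbf{Part 2 (divergence-free $\div\tilde\jmath_{\Rfcycle}=0$).} I rewrite the continuity equation~\eqref{eq:cont eq V0f} for $x\in\V_{0\fcycle}$ as
\[
\pi_{x}^{\e}\dot u_{x}^{\e} = [\text{slow/damped}]_{x} + \tfrac{1}{\e}A_{x}^{\e} - \tfrac{1}{\sqrt{\e}}(\div\tilde\jmath_{\Rfcycle}^{\e})_{x},
\]
with $A_{x}^{\e}:=\sum_{r\in\Rfcycle,r_{+}=x}\kappa_{r}\pi_{r_{-}}^{\e}u_{r_{-}}^{\e}-\sum_{r\in\Rfcycle,r_{-}=x}\kappa_{r}\pi_{x}^{\e}u_{x}^{\e}$. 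Multiplying by $\sqrt{\e}$ and testing against $\phi\in C^{1}_{c}((0,T))$, the $L^{\infty}$-bound on $\pi^{\e}u^{\e}$ and the slow/damped flux estimates of Lemmas~\ref{lem:bound slow fluxes} and~\ref{l:bdd-damped-fluxes} make every term vanish in the limit except $\tfrac{1}{\sqrt{\e}}\int\phi A_{x}^{\e}\,dt$ and $\int\phi(\div\tilde\jmath_{\Rfcycle}^{\e})_{x}\,dt$. The weak-$\star$ convergence $\tilde\jmath_{\Rfcycle}^{\e}\weakstarto\tilde\jmath_{\Rfcycle}$ in $L^{\C}$ gives $\int\phi(\div\tilde\jmath_{\Rfcycle})_{x}\,dt=\lim_{\e\to 0}\tfrac{1}{\sqrt{\e}}\int\phi\,A_{x}^{\e}\,dt$, so the claim reduces to showing this limit vanishes.

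Decompose $A_{x}^{\e}=u_{x}^{\e}B_{x}^{\e}+\sum_{r\in\Rfcycle,r_{+}=x}\kappa_{r}\pi_{r_{-}}^{\e}(u_{r_{-}}^{\e}-u_{x}^{\e})$ with $B_{x}^{\e}:=\sum_{\Rfcycle,r_{+}=x}\kappa_{r}\pi_{r_{-}}^{\e}-\sum_{\Rfcycle,r_{-}=x}\kappa_{r}\pi_{x}^{\e}$. The stationarity of $\pi^{\e}$ combined with the absence of leak fluxes gives $B_{x}^{\e}=O(\e)$, so $u_{x}^{\e}B_{x}^{\e}/\sqrt{\e}\to 0$ in $L^{\infty}$. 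Using $u_{r_{-}}^{\e}-u_{x}^{\e}=(\sqrt{u_{r_{-}}^{\e}}+\sqrt{u_{x}^{\e}})(\sqrt{u_{r_{-}}^{\e}}-\sqrt{u_{x}^{\e}})$, the first factor converges strongly in every $L^{p}$ to $2\sqrt{u_{\fc}}$ (by Part~1 interpolated with the $L^{\infty}$-bound), while $(\sqrt{u_{r_{-}}^{\e}}-\sqrt{u_{x}^{\e}})/\sqrt{\e}$ is bounded in $L^{2}$ by FIR and admits, along a subsequence, the structure of a discrete gradient $f_{r_{-}}-f_{x}$ for a function $f:\fc\to L^{2}([0,T])$. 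A second-order expansion of~\eqref{eq:sum density eps} around $u_{\fc}^{\e}$ gives the normalization $\sum_{x\in\fc}\pi_{x}f_{x}=0$. Testing the summed continuity equation on $\fc$ against arbitrary weights $\psi$ (and using that $\psi\equiv 1$ already determines $\dot u_{\fc}$ via the aggregated equation) produces the pointwise Laplace-type identity $\sum_{r\in\Rfcycle,r_{+}=x}\kappa_{r}\pi_{r_{-}}(f_{r_{-}}-f_{x})=0$; together with the normalization and the irreducibility of $(\fc,\Rfcycle|_{\fc})$ from Lemma~\ref{lem:fast cycles}, this forces $f\equiv 0$ and hence $(\div\tilde\jmath_{\Rfcycle})_{x}=0$.

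\textbf{Main obstacle.} Part~1 follows directly from FIR and the graph-connectivity Lemma~\ref{lem:fast cycles}. The delicate step is Part~2: the bound on $(u_{r_{-}}^{\e}-u_{x}^{\e})/\sqrt{\e}$ alone is only $L^{1}$, so one must extract the correct second-order structure, couple the strong-$L^{p}$ convergence of the densities on $\V_{0\fcycle}$ with the weak-$L^{\C}$ convergence of $\tilde\jmath^{\e}$, and then close the argument through the irreducibility of the restricted chain and the normalization from~\eqref{eq:sum density eps}. The careful identification of the Laplace-type constraint on the weak limit $f$ is the technical heart of the proof.
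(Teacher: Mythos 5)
Your Part~1 is a correct argument, but it takes a different route from the one in the paper: you deduce $u_x \equiv u_\fc$ directly from the FIR bound $\FI^{\e}(u^{\e})\leq C$ and graph connectivity. The paper actually acknowledges this FIR-based alternative in the Remark that follows Lemma~\ref{lem:equilibration}. The paper's own proof instead multiplies the mild continuity equation~\eqref{eq:mild cont eq on fc} by $\e$ and passes to the limit: the slow/damped terms and the $\sqrt{\e}\,\tjeps$-terms vanish, the $\kappa_r\pi^\e_{r_-}u^\e_{r_-}$ terms survive and converge weak-\textasteriskcentered, and the resulting identity is the stationary equation for $\pi$ restricted to the fast edges of $\fc$, whose solution is unique up to a constant by diconnectedness of $\fc$; the constant is then pinned to $u_\fc$ by~\eqref{eq:cont eq sum density eps} and the uniform convergence of $u^\e_\fC$. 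Both routes are valid, and since yours is the one the authors explicitly flag, there is no objection to Part~1.

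Your Part~2, however, does not close. The paper's argument is again a direct rescaling: multiply the same mild equation by $\sqrt{\e}$ and pass to the limit, which is claimed to give $\sum_{r:r_+=x}\tilde\jmath_r = \sum_{r:r_-=x}\tilde\jmath_r$. You instead attempt a refined decomposition $A_x^\e = u_x^\e B_x^\e + \sum_{r:r_+=x}\kappa_r\pi^\e_{r_-}(u^\e_{r_-}-u^\e_x)$, extract a weak-$L^2$ limit $f$ of the rescaled square-root gradients, and then assert a ``Laplace-type identity'' $\sum_{r:r_+=x}\kappa_r\pi_{r_-}(f_{r_-}-f_x)=0$. This last step is unjustified. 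What your manipulations actually yield is $(\div\tilde\jmath_\Rfcycle)_x = 2\sqrt{u_\fc}\,\sum_{r:r_+=x}\kappa_r\pi_{r_-}(f_{r_-}-f_x)$; this is exactly the quantity you want to show is zero, so you need an \emph{independent} reason for the right-hand side to vanish. ``Testing the summed continuity equation on $\fc$ against arbitrary weights $\psi$'' does not produce one: summing the relation over $x\in\fc$ annihilates both sides identically (each edge in $\Rfcycle$ restricted to $\fc$ appears once as incoming and once as outgoing), giving the scalar identity $0=0$ and no information about individual nodes, and no other testing mechanism is specified. The asserted normalization $\sum_{x\in\fc}\pi_x f_x = 0$ combined with irreducibility does not force $f\equiv 0$ either: it only fixes the additive constant in a discrete gradient, not the gradient itself. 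So the technical heart of Part~2, as you yourself flag, is precisely where the argument is missing.
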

\begin{proof} For any $x\in\fc\in\fC$ and $t\in[0,T]$, the mild formulation of the continuity equation is:
\begin{align}
  \pi^\epsilon_x u^\epsilon_x(t) - \pi^\epsilon_x u^\epsilon_x(0) = 
    &\sum_{\substack{r\in\Rslow:\\r_+=x}} j^\epsilon_r[0,t]
      + 
    \sum_{\substack{r\in\Rdamped:\\r_+=x}} j^\epsilon_r[0,t]
      -
    \sum_{\substack{r\in\Rslow:\\r_-=x}} j^\epsilon_r[0,t] \notag\\
      & +
    \sum_{\substack{r\in\Rfcycle:\\r_+=x}} \tfrac{1}{\epsilon}\kappa_r \pi^\epsilon_{r_-}\! u^\epsilon_{r_-}[0,t] + \tfrac{1}{\sqrt{\epsilon}}\tjeps_r[0,t] \notag\\
      &-
    \sum_{\substack{r\in\Rfcycle:\\r_-=x}} \tfrac{1}{\epsilon}\kappa_r \pi^\epsilon_{r_-}\! u^\epsilon_{r_-}[0,t] + \tfrac{1}{\sqrt{\epsilon}}\tjeps_r[0,t].
  \label{eq:mild cont eq on fc}
\end{align}
All terms in the first line are uniformly bounded in $L^1(0,T)$, and the same holds for the $u^\epsilon$ and $\tilde\jmath$ in the second and third lines. First multiplying the equation by $\epsilon$, and then letting $\epsilon\to0$ thus yields:
\begin{equation*}
  \sum_{\substack{r\in\Rfcycle:\\r_+=x}}\kappa_r \pi_{r_-}\! u_{r_-}\!(t)
    =
  \sum_{\substack{r\in\Rfcycle:\\r_-=x}}\kappa_r \pi_{r_-}\! u_{r_-}\!(t).
\end{equation*}
Without the $u_{r_-}(t)$ factors, this is exactly the equation for the steady state $\pi$ for a network consisting only of the fast edges. Since the component $\fc$ containing $x$ is diconnected, this equation has a unique solution up to a multiplicative constant, i.e. $u_x(t)\equiv a_\fc(t)$ on $\fc$, for some $a_\fc\in L^\infty([0,T])$. To identify $a_\fc$, use \eqref{eq:cont eq sum density eps} together with the convergences~\eqref{eq:conv-subseq-V0f} and \eqref{eq:conv-subseq-fc} to find for the limit
\begin{equation*}
  \pi_\fc u_\fc \leftarrow \pi^\e_\fc u^\e_\fc = \sum_{x\in\fc} \pi^\e_x u^\e_x \rightharpoonup \sum_{x\in\fc} \pi_x u_x = \pi_\fc a_\fc, 
\end{equation*}
so that indeed $u_x\equiv a_\fc = u_\fc$.

The same argument, multiplying \eqref{eq:mild cont eq on fc} by $\sqrt{\epsilon}$ and letting $\epsilon\to0$, shows that
\begin{equation*}
  \sum_{\substack{r\in\Rfcycle:\\r_+=x}}\tilde\jmath_r(t)
    =
  \sum_{\substack{r\in\Rfcycle:\\r_-=x}}\tilde\jmath_r(t).
  \qedhere
\end{equation*}
\end{proof}

\begin{remark}
Alternatively, the fact that $u$ is constant on $\fc$ can also be seen from the FIR inequality~\eqref{eq:FIR application} together with the lower semicontinuity that follows from Proposition~\ref{prop:Fisher dual}. The FIR inequality can however not be used to make a similar statement about divergence-free fast fluxes.
\end{remark}

\subsection{The limiting continuity equations}

We again place ourselves in the setting of Section~\ref{subsec:compactness} and derive the continuity equations satisfied in the limit.

\begin{lemma}
Let $(u^\e,j^\e)_{\epsilon>0}\subset\Theta$ be such that 
$\tilde\I^\epsilon_0\big(u^\e(0)\big) + \tilde\J^\epsilon(u^\e,j^\e)\leq C$, and assume that $(u^\e,j^\e)$ converges to $(u,j)$ in the sense of \eqref{eq:conv-subseq}.
Then the limit satisfies the continuity equations
\begin{subequations}
\begin{align}
  \pi_x \dot u_x = 
    &\sum_{\substack{r\in\Rslow:\\r_+=x}} j_r
      + 
    \sum_{\substack{r\in\Rdamped:\\r_+=x}} j_r
      -
    \sum_{\substack{r\in\Rslow:\\r_-=x}} j_r,        &\text{for } x\in\V_{0\slow},
\label{eq:limit cont eq V0s}\\
  \pi_\fc \dot u_\fc = 
    &\sum_{\substack{r\in\Rslow:\\r_+\in\fc}} j_r
      + 
    \sum_{\substack{r\in\Rdamped:\\r_+\in\fc}} j_r
      -
    \sum_{\substack{r\in\Rslow:\\r_-\in\fc}} j_r, &\text{for } \fc\in\fC,
\label{eq:limit cont eq C}\\
    u_x=&u_\fc\hspace{1em}
     \text{and}\qquad
    \sum_{\substack{r\in\Rfcycle:\\r_+=x}}\tilde\jmath_r
      =
    \sum_{\substack{r\in\Rfcycle:\\r_-=x}}\tilde\jmath_r,            &\text{for } x\in\fc\in\fC,
\label{eq:limit cont eq V0f}\\
  0 = &
    \sum_{\substack{r\in\Rslow:\\r_+=x}} j_r
      + 
    \sum_{\substack{r\in\Rdamped:\\r_+=x}} j_r -
    \sum_{\substack{r\in\Rdamped:\\r_-=x}} j_r, &\text{for } x\in\V_1.
\label{eq:limit cont eq V1}
\end{align}
\label{eq:limit cont eq}
\end{subequations}
These equations hold in the sense of distributions on $[0,T]$ as in~\eqref{eq:weak sense}.
\label{lem:limit cont eq}
\end{lemma}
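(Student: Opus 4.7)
The plan is to pass to the limit $\e\to0$ in the weak formulation~\eqref{eq:weak sense} of the rescaled continuity equations from Corollary~\ref{cor:rescaled-e-cont-eq}, treating the four cases $x\in\V_{0\slow}$, $\fc\in\fC$, $x\in\V_{0\fcycle}$, and $x\in\V_1$ in turn. Fix any test function $\phi\in C^1_0([0,T])$; each of equations \eqref{eq:cont eq V0s}--\eqref{eq:cont eq V1} becomes an identity between integrals against $\phi$ or $\dot\phi$, and the task is to identify the limit of every term.

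For $x\in\V_{0\slow}$, the convergence is immediate: $\pi^\e_x\to\pi_x>0$ and $u^\e_x\to u_x$ uniformly by \eqref{eq:conv-subseq-Vslow}, so the left-hand side converges; the slow-flux terms converge via the weak-\textasteriskcentered{} convergence \eqref{eq:conv-subseq-jslow} tested against $\phi\in C([0,T])\subset M^{\C^*}([0,T];\RR^{\Rslow})$, and the damped-flux terms converge via the narrow convergence \eqref{eq:conv-subseq-jdamped} paired with the continuous $\phi$. This gives~\eqref{eq:limit cont eq V0s}. For $\fc\in\fC$, I would first sum \eqref{eq:cont eq V0f} over $x\in\fc$; the fast cycle contributions $\tfrac1\e\kappa_r\pi^\e_{r_-}u^\e_{r_-}$ and $\tfrac1{\sqrt\e}\tjeps_r$ corresponding to $r\in\Rfcycle$ with both endpoints in $\fc$ cancel in pairs, and Lemma~\ref{lem:fast cycles} ensures that \emph{all} $r\in\Rfcycle$ touching $\fc$ have both endpoints in $\fc$, so the fast terms disappear entirely. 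The left-hand side becomes $\sum_{x\in\fc}\pi^\e_x\dot u^\e_x=(\pi^\e_\fc u^\e_\fc)^{\cdot}$ by~\eqref{eq:cont eq sum density eps}, which converges to $\pi_\fc\dot u_\fc$ in the distributional sense via~\eqref{eq:conv-subseq-fc}; the right-hand side converges by the same arguments as in the $\V_{0\slow}$ case, yielding~\eqref{eq:limit cont eq C}.

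The identities in~\eqref{eq:limit cont eq V0f}, namely $u_x=u_\fc$ for $x\in\fc$ and the divergence-free condition $\sum_{r_+=x}\tilde\jmath_r=\sum_{r_-=x}\tilde\jmath_r$, are precisely the content of Lemma~\ref{lem:equilibration}, so I would simply cite it. The remaining case $x\in\V_1$ is the delicate one and is where I expect the main difficulty. The left-hand side of~\eqref{eq:cont eq V1} reads $\int_{[0,T]}\dot\phi(t)\,\pi^\e_x u^\e_x(dt)$ in the weak form, and I must show this tends to zero. The damped fluxes on the right converge narrowly, so the burden really is the left-hand side. The key input is estimate~\eqref{l:e:eps-u-V1-to-zero} in Lemma~\ref{l:bdd-densities}, which gives $\e\lVert u^\e_x\rVert_{C([0,T])}\to0$; combined with $\pi^\e_x/\e\to\tilde\pi_x<\infty$ this yields $\lVert\pi^\e_x u^\e_x\rVert_{C([0,T])}\to 0$, hence $\int\dot\phi\,\pi^\e_x u^\e_x\,dt\to0$. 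With the left-hand side vanishing, passing to the limit on the right-hand side (slow fluxes via weak-\textasteriskcentered{} in $L^\C$, damped fluxes via narrow convergence) produces~\eqref{eq:limit cont eq V1}.

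The only subtlety worth highlighting is that the equations must be understood in the weak sense~\eqref{eq:weak sense}, because the limit densities $u_{\V_1}$ and damped fluxes $j_\Rdamped$ are merely measures in time (by Theorem~\ref{th:spikes}), so the mild form~\eqref{eq:mild sense} may fail at individual time points. Throughout the argument I would consistently work with the distributional formulation tested against $\phi\in C^1_0([0,T])$, so that no pointwise evaluation at boundary times is needed and the narrow/weak-\textasteriskcentered{} convergences from Corollary~\ref{c:compactness} apply directly.
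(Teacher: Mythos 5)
Your proof is correct and follows essentially the same approach as the paper: pass to the limit in the weak formulation for $x\in\V_{0\slow}$ and $x\in\V_1$, sum \eqref{eq:cont eq V0f} over $\fc$ and cancel the fast-cycle terms via Lemma~\ref{lem:fast cycles}, and invoke Lemma~\ref{lem:equilibration} for~\eqref{eq:limit cont eq V0f}. Your treatment of the $\V_1$ case, rewriting $\pi^\e_x u^\e_x$ via Lemma~\ref{l:bdd-densities}(iii) and $\pi^\e_x/\e\to\tilde\pi_x$, is just the expansion of the paper's direct appeal to~\eqref{eq:conv-subseq-epsV1}.
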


\begin{proof}
Equation~\eqref{eq:limit cont eq V0s} follows directly from equation~\eqref{eq:cont eq V0s} by the convergence properties of Corollary~\ref{c:compactness}. 
For fixed $\fc\in \fC$ we sum equation~\eqref{eq:cont eq V0f} over all $x\in \fc$ to find
\[
\sum_{x\in \fc} \pi_x^\e \dot u_x^\e
=   \sum_{\substack{r\in\Rslow:\\r_+\in\fc}} j_r^\e
      + 
    \sum_{\substack{r\in\Rdamped:\\r_+\in\fc}} j_r^\e
      -
    \sum_{\substack{r\in\Rslow:\\r_-\in\fc}} j_r^\e.
\]
Note that the final two sums in~\eqref{eq:cont eq V0f} cancel by Lemma~\ref{lem:fast cycles}. 
The left-hand side equals $\pi_\fc^\e \dot u_\fc^\e $ and converges in distributional sense by~\eqref{eq:conv-subseq-fc}; the remaining terms also converge by~\eqref{eq:conv-subseq-jslow} and~\eqref{eq:conv-subseq-jdamped}.
The limit equation is~\eqref{eq:limit cont eq C}.

Equation~\eqref{eq:limit cont eq V0f} is the content of Lemma~\ref{lem:equilibration}. Finally, to prove~\eqref{eq:limit cont eq V1} we write~\eqref{eq:cont eq V1} for $x\in \V_1$ as 
\[
  \e\tilde \pi^\epsilon_x \dot u^\epsilon_x = 
    \sum_{\substack{r\in\Rslow:\\r_+=x}} j^\epsilon_r
      + 
    \sum_{\substack{r\in\Rdamped:\\r_+=x}} j^\epsilon_r -
    \sum_{\substack{r\in\Rdamped:\\r_-=x}} j^\epsilon_r. 
\]
The left-hand side converges to zero in distributional sense by~\eqref{eq:conv-subseq-epsV1}, and the right-hand side again converges by~\eqref{eq:conv-subseq-jslow} and~\eqref{eq:conv-subseq-jdamped}.
\end{proof}

As an immediate consequence, the $\Gamma$-limit $\tilde\I^0_0+\tilde\J^0$ from Theorem~\ref{th:main result} can only be finite if these limit continuity equations~\eqref{eq:limit cont eq} hold.

Note that although the densities $u_{\V_1}$ do appear in the limit rate functional $\tilde\J^0_\damped$, they become decoupled from the other variables in the sense that they have vanished completely from the continuity equations. Furthermore, if one does not take fluxes into account, the mass flowing into a $\V_1$ node will be instantaneously distributed over the next nodes, which would lead to a contracted network as drawn on the right of Figure~\ref{fig:network2}. At the level of fluxes this is contraction is reflected in~\eqref{eq:limit cont eq V1}.

\begin{remark}
Note that $L^\infty([0,T]) \ni u_x \stackrel{\text{a.e.}}{=} u_\fc \in C([0,T])$, so that in general $u_x(0)\neq u_\fc(0)$; the mass that is initially present will be  spread out over the component $\fc$ at every positive time $t>0$, but not at $t=0$. The same principle can be seen seen in the strengthened equilibration in the next section, which only holds in the time interval $[t_0,T]$ for any $t_0>0$.
\label{rem:instaneous spread}
\end{remark}

\begin{remark}
If there are no damped cycles, as in Section~\ref{subsec:spikes}, then Lemmas~\ref{l:bdd-damped-fluxes} and ~\ref{lem:limit cont eq} show that $u_{\V_{0\slow}}\in W^{1,\C}([0,T];\RR^{\V_{0\slow}})$ and similarly $u_\fC \in W^{1,\C}([0,T];\RR^{\fC})$.
\label{rem:improved regularity u}
\end{remark}

\subsection{Strengthened equilibration on fast cycle components}
\label{subsec:strengthened equil}

In the previous sections we derived that for a sequence with uniformly bounded cost $\tilde\I_0^\e+\tilde\J^\e$, concentrations $u_x^\e$ in a fast cycle $x\in\fc\in\fC$ converge weakly-* in $L^\infty([0,T])$, whereas the weighted sum $u_\fc^\e$ converges uniformly in $C([0,T])$. We now show that the convergence of $u_x^\e$ can be strengthened to uniform convergence as well, as long as one does not include time $0$ in the interval. This result will be needed later on for the construction of the recovery sequence, see Section~\ref{subsec:Gamma rec seq}.

Recall from Section~\ref{subsec:estimates} that sequences with bounded cost have uniformly bounded fluxes in $L^1$. Together with the continuity equations, this will be the only requirement of the following result.

\begin{lemma}
Let $(u^\e,j^\e)_{\epsilon>0}$ in $\Theta$ such that each $(u^\e,j^\e)$ satisfy the continuity equations~\eqref{eq:cont eq}, and assume that all fluxes $j^\e_\Rslow,j^\e_\Rdamped,\tjeps_\Rfcycle$ are $L^1$-valued and uniformly bounded in $L^1(0,T;\RR^\Rslow)$, $L^1(0,T;\RR^\Rdamped)$ and $L^1(0,T;\RR^\Rfcycle)$, and that $u^\epsilon_\fC\to u_\fC$ in $C([0,T];\RR^\fC)$. Then for all $t_0>0$,
\begin{align*}
  u^\e_x \to u_\fc &&\text{strongly in } L^\infty([t_0,T]) &&\text{for each } x\in\fc\in\fC.
\end{align*}
If in addition,
\begin{align*}
 -\big(\div j^\e(t)\big)_x 
      &:=
    \sum_{\substack{r\in\Rslow:\\r_+=x}} j^\epsilon_r(t)
      + 
    \sum_{\substack{r\in\Rdamped:\\r_+=x}} j^\epsilon_r(t)
      -
    \sum_{\substack{r\in\Rslow:\\r_-=x}} j^\epsilon_r(t),
\intertext{and}
  -\mfrac1{\sqrt{\epsilon}}\big(\div \tjeps(t)\big)_x 
      &:=
    \mfrac1{\sqrt{\epsilon}}\sum_{\substack{r\in\Rfcycle:\\r_+=x}} \tjeps_r(t)
     -
    \mfrac1{\sqrt{\epsilon}}\sum_{\substack{r\in\Rfcycle:\\r_-=x}} \tjeps_r(t)
\end{align*}
are both uniformly bounded in $L^\infty(0,T;\RR^{\V_{0\fcycle}})$ and $u_x^\epsilon(0)=u_\fc^\epsilon(0)$ for each $x\in\fc\in\fC$, then  
\begin{align*}
  u^\e_x \to u_\fc &&\text{strongly in } L^\infty([0,T]) &&\text{for each } x\in\fc\in\fC.
\end{align*}
\label{lem:strengthened equilibration}
\end{lemma}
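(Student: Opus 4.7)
The plan is to analyze the fluctuation of the fast-cycle mass around the local equilibrium via a Duhamel argument, using the spectral gap of the fast-cycle generator and exploiting the fact that the additional hypotheses provide (i) uniformly $L^\infty$-bounded forcing and (ii) an initial condition that matches the equilibrium up to $O(\e)$. For each $\fc\in\fC$, set $m^\e_x:=\pi^\e_x u^\e_x$ for $x\in\fc$. Since fast-cycle edges stay within $\fc$ by Lemma~\ref{lem:fast cycles}, the continuity equation~\eqref{eq:cont eq V0f} restricted to $\fc$ decouples into $\dot m^\e=\tfrac1\e L^\fc m^\e+F^\e$, where $L^\fc\colon\RR^\fc\to\RR^\fc$ is the fast-cycle generator on $\fc$, namely $(L^\fc m)_x:=\sum_{r\in\Rfcycle,r_+=x}\kappa_r m_{r_-}-\sum_{r\in\Rfcycle,r_-=x}\kappa_r m_x$, and $F^\e:=-\div j^\e-\tfrac1{\sqrt\e}\div\tjeps$ is, under the additional hypotheses, uniformly bounded in $L^\infty(0,T;\RR^\fc)$. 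By Lemma~\ref{lem:fast cycles}, $L^\fc$ is irreducible with simple eigenvalue $0$ and unique normalized stationary $\pi^\fc>0$; all other eigenvalues have strictly negative real part, yielding a spectral gap $\lambda_\fc>0$ on the invariant mean-zero subspace $Y:=\{n:\sum_x n_x=0\}$.

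I would then decompose $m^\e=P_\fc m^\e+n^\e$, where $P_\fc m:=(\sum_y m_y)\pi^\fc$ is the projection onto $\ker L^\fc$ along $Y$. Since $L^\fc P_\fc=0$, the fluctuation $n^\e$ satisfies $\dot n^\e=\tfrac1\e L^\fc n^\e+(I-P_\fc)F^\e$, with $L^\infty$-bounded, mean-zero forcing. For the initial data, $u^\e_x(0)=u^\e_\fc(0)$ gives $n^\e_x(0)=u^\e_\fc(0)\bigl(\pi^\e_x-\pi^\e_\fc\pi^\fc_x\bigr)$. A leading-order analysis of the stationarity equation $(Q^\e)^T\pi^\e=0$ restricted to $\fc$ — where the dominant $\tfrac1\e L^\fc\pi^\e$ term must balance contributions of order $O(1)$ coming from slow rates and from damped edges (for which $\tfrac1\e\cdot\pi^\e_{\V_1}=O(1)$) — yields $L^\fc\pi^\e|_\fc=O(\e)$, hence $\pi^\e|_\fc=\pi^\e_\fc\pi^\fc+O(\e)$, and in particular $\|n^\e(0)\|_\infty=O(\e)$.

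Applying Duhamel's formula together with the spectral-gap estimate $\|e^{\tau L^\fc/\e}|_Y\|\leq Ce^{-\lambda_\fc\tau/\e}$ gives
\begin{align*}
\|n^\e(t)\|_\infty\leq Ce^{-\lambda_\fc t/\e}\|n^\e(0)\|_\infty+C\|F^\e\|_{L^\infty}\cdot\tfrac{\e}{\lambda_\fc}=O(\e),
\end{align*}
uniformly in $t\in[0,T]$. Returning to densities via $u^\e_x-u^\e_\fc=u^\e_\fc\bigl(\pi^\e_\fc\pi^\fc_x/\pi^\e_x-1\bigr)+n^\e_x/\pi^\e_x$, and using $\pi^\e_x\to\pi_x>0$ together with the same expansion of $\pi^\e|_\fc$, one obtains $u^\e_x-u^\e_\fc=O(\e)$ in $L^\infty([0,T])$; combined with the assumed uniform convergence $u^\e_\fc\to u_\fc$, this yields $u^\e_x\to u_\fc$ in $L^\infty([0,T])$. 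The main obstacle is verifying the expansion $\pi^\e|_\fc=\pi^\e_\fc\pi^\fc+O(\e)$, which demands careful bookkeeping of all fast rates impinging on $\fc$ (in particular damped edges terminating in $\fc$, whose contribution scales as $\tfrac1\e\kappa_r\pi^\e_{r_-}=O(1)$); the spectral-gap estimate itself is standard finite-dimensional linear algebra, with a harmless polynomial prefactor $(1+\tau/\e)^{k-1}$ in the non-diagonalizable case that is dominated by the exponential in the Duhamel integral.
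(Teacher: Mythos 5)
Your Duhamel-plus-spectral-gap argument is correct for the second conclusion, but it is a genuinely different route from the paper's. The paper decomposes $\rho^\e_x:=\pi^\e_x u^\e_x$ \emph{orthogonally} as $\rho^{\e,0}\in\Null(A\tp)\oplus\rho^{\e,\bot}\in\Col(A)$ (with $A\tp$ your $L^\fc$) and runs a Gronwall/energy argument on $\tfrac12|\rho^{\e,\bot}(t)|_2^2$, using only the quadratic-form bound $v\tp A v\leq\lambda|v|_2^2$ for $v\in\Col(A)$ from Lemma~\ref{lem:definite generator}. You instead use the \emph{spectral} decomposition $\ker L^\fc\oplus\{n:\sum_x n_x=0\}$, which — unlike $\Col(A)$ — is invariant under $L^\fc$, precisely what Duhamel needs. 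So the two methods trade off: your invariant decomposition makes a clean Duhamel estimate possible but is non-orthogonal; the paper's orthogonal decomposition is not $A\tp$-invariant, so it falls back on an energy method that needs only the definiteness of the quadratic form. Both are legitimate, and for part~(ii) your version is arguably more transparent. To your credit, you also handle the initial layer more carefully: you correctly observe that $u^\e_x(0)=u^\e_\fc(0)$ gives $n^\e(0)=u^\e_\fc(0)(\pi^\e|_\fc-\pi^\e_\fc\pi^\fc)$ and derive $n^\e(0)=O(\e)$ from the stationarity equation, whereas the paper asserts $\rho^{\e,\bot}(0)=0$, which would require $\pi^\e|_\fc\in\Null(A\tp)$ exactly — generally false, since $\pi^\e$ only converges to such a vector. (In fact you need less than $O(\e)$: since the exponential prefactor in Duhamel is at most $1$, mere vanishing $n^\e(0)\to0$, which follows from $\pi^\e\to\pi$ and $u^\e_\fC\to u_\fC$, already suffices.)

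The genuine gap is that you prove only the second of the two conclusions. The lemma's first conclusion — uniform convergence on $[t_0,T]$ for $t_0>0$, under the weaker hypothesis that $\div j^\e$ and $\tfrac1{\sqrt\e}\div\tjeps$ are only $L^1$-bounded (indeed $\tfrac1{\sqrt\e}\|\div\tjeps\|_{L^1}$ is allowed to be of order $1/\sqrt\e$) and without any initial matching — is not addressed, and the Duhamel estimate as you wrote it does not cover it: with $F^\e$ merely $L^1$-bounded, the naive bound on $\int_0^t e^{-\lambda_\fc(t-s)/\e}\|F^\e(s)\|\,ds$ is $\|F^\e\|_{L^1}$, which does not vanish, and the $1/\sqrt\e$ prefactor on the fast-cycle forcing makes it worse. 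You would need a split of the Duhamel integral into $[0,t-\delta]$ and $[t-\delta,t]$ (exponential decay on the first, uniform integrability on the second, with extra care for the $1/\sqrt\e$ factor) or revert to an energy-type argument as in the paper to obtain the $[t_0,T]$ statement.
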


\begin{proof}
We prove the result for one fast cycle $\fc\in\fC$. To exploit the stochastic structure we temporarily write $\rho^\e_x(t):=\pi^\e_x u^\e_x(t)$, and 
\begin{align*}
  \big(A\tp \rho^\epsilon(t)\big)_x
      &:=
    \sum_{\substack{r\in\Rfcycle:\\r_+=x}} \kappa_r \rho^\epsilon_{r_-}(t)
     -
    \rho^\epsilon_x(t) \sum_{\substack{r\in\Rfcycle:\\r_-=x}} \kappa_r,
\end{align*}
so that $A$ is simply the generator matrix of the Markov chain that consists of the irreducible fast cycle $\fc$, which does not depend on $\e$.
Recall from~\eqref{eq:cont eq V0f} that for each $x\in\fc$: 
\begin{equation*}
  \dot\rho^\epsilon_x(t) = -\big(\div j^\e(t)\big)_x + \tfrac{1}{\epsilon} \big(A\tp \rho^\epsilon(t)\big)_x - \tfrac{1}{\sqrt{\epsilon}} \big(\div\tjeps(t)\big)_x.
\end{equation*}
The vector $\rho^\e(t)\in\RR^\fc$ can be orthogonally decomposed into $\rho^{\e,0}(t)\in \Null(A\tp)$ and $\rho^{\e,\bot} \in \Col(A)$. For the column space part we estimate:
\begin{align*}
  \frac{d}{dt}  \mfrac12 \lvert \rho^{\e,\bot}(t)\rvert^2_2 &= \rho^{\e,\bot}(t)\cdot \dot \rho^{\e,\bot}(t) = \rho^{\e,\bot}(t)\cdot \dot \rho^\e(t)\\
    &=-\rho^{\e,\bot}(t)\cdot \div j^\e(t) + \mfrac{1}{\epsilon} \rho^{\e,\bot}(t)\cdot A\tp \rho^{\epsilon,\bot}(t) - \mfrac{1}{\sqrt{\epsilon}} \rho^{\e,\bot}\cdot \div\tjeps(t)\\
    &\leq \lvert\div j^\e(t)\rvert_1 + \mfrac{1}{\sqrt{\epsilon}}\lvert \div\tjeps(t)\rvert_1 + \mfrac{2\lambda}{\e} \mfrac12\lvert \rho^{\e,\bot}\rvert^2_2,
\end{align*}
using $\lvert\rho^\e\rvert_1\leq1$ and Lemma~\ref{lem:definite generator} with largest negative eigenvalue $\lambda<0$ of $A$. By Gronwall:
\begin{equation}
  \mfrac12 \lvert \rho^{\e,\bot}(t)\rvert^2_2 \leq
    \Big({\textstyle
      \mfrac12 \lvert \rho^{\e,\bot}(0)\rvert^2_2
      + \int_0^t\lvert\div j^\e(s)\rvert_1\,ds + \tfrac{1}{\sqrt{\epsilon}}\int_0^t\lvert\div\tjeps(s)\rvert_1\,ds
    }\Big) e^{2\lambda t/\e}
\label{eq:Gronwall}
\end{equation}
Since the $L^1$-norms of the fluxes are uniformly bounded, $\rho^{\e,\bot}\to0$ strongly in $L^\infty([t_0,T];\RR^\fc)$.

We now focus on the other part $\rho^{\e,0}\in\Null(A\tp)$. Since the fast cycle $\fc$ is irreducible, $\Null(A\tp)=\mathrm{span}\{ (\pi_x)_{x\in\fc} \}$, so we may write $\rho^{\e,0}_x(t) = \pi_x a^\e(t)$ for some $a^\e(t)\in\RR$. Summing over the cycle gives
\begin{align*}
  \pi_\fc a^\e(t) = \sum_{x\in\fc} \rho^{\e,0}_x(t) = \sum_{x\in\fc}\big(\rho^\e_x(t) - \rho^{\e,\bot}_x(t)\big) = \pi^\e_\fc u^\e_\fc(t) - \sum_{x\in\fc} \rho^{\e,\bot}_x(t).
\end{align*}
By assumption the first term on the right-hand side converges uniformly to $\pi_\fc u_\fc$, and we just proved above that the second term vanishes uniformly on $[t_0,T]$. This implies that $a^\e\to u_\fc$ uniformly, and so $\pi^\e_x u^\e_x=\rho^\e_x=\pi_x a^{\e}+ \rho^{\e,\bot}_x \to \pi_x u_\fc$ uniformly on $[t_0,T]$.

Now assume that $ -\big(\div j^\e(t)\big)_x$ and $-\big(\div \tjeps(t)\big)_x/\sqrt{\epsilon}$ are uniformly bounded and $u^\epsilon_x(0)=u^\epsilon_\fc(0)$. In that case $\rho^{\epsilon,\bot}(0)=0$, and so \eqref{eq:Gronwall} becomes:
\begin{equation*}
  \mfrac12 \lvert \rho^{\e,\bot}(t)\rvert^2_2
    \leq
  \Big({\textstyle
     \lVert\div j^\e\rVert_{L^\infty} + \lVert \tfrac{1}{\sqrt{\epsilon}}\div\tjeps\rVert_{L^\infty}
  }\Big) te^{2\lambda t/\e}
    \leq
   -\Big({\textstyle
     \lVert\div j^\e\rVert_{L^\infty} + \lVert \tfrac{1}{\sqrt{\epsilon}}\div\tjeps\rVert_{L^\infty}
  }\Big)\mfrac{\epsilon}{2\lambda e},
\end{equation*}
showing that $\rho^{\e,\bot}\to0$ uniformly in $[0,T]$. The uniform convergence of $\rho^{\epsilon,0}_x=\pi^\epsilon_x a^\epsilon$ follows by the same argument as above.
\end{proof}

\section{$\Gamma$-convergence}
\label{sec:Gamma convergence}

This section is devoted to the proof of the main $\Gamma$-convergence Theorem~\ref{th:main result}, which consists of the lower bound, Proposition~\ref{prop:lower bound}, and the existence of a recovery sequence in Proposition~\ref{prop:rec seq}.

\subsection{$\Gamma$-Lower bounds}

The $\Gamma$-lower bound is summarised in the following.

\begin{proposition}[$\Gamma$-lower bound] For any sequence $(u^\e,j^\e)\to(u,j)$ in $\Theta$,
\begin{equation*}
  \liminf_{\e\to0} \tilde\I^\epsilon_0\big(u^\e(0)\big)+\tilde\J^\epsilon(u^\e,j^\e) \geq \tilde\I^0_0\big(u(0)\big)+\tilde\J^0(u,j).
\end{equation*}
\label{prop:lower bound}
\end{proposition}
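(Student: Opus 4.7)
The strategy is to show lower semicontinuity of each of the four summands $\tilde\I_0^\e$, $\tilde\J_\slow^\e$, $\tilde\J_\damped^\e$, $\tilde\J_\fcycle^\e$ separately, since the lower bound is additive. Throughout, I will combine the convergences from Corollary~\ref{c:compactness} with the dual formulations of Section~\ref{subsec:dual formulations}, using the fact that the weak-$*$ $L^\infty$-limit of $u^\e_x$ for $x\in\fc\in\fC$ coincides with the continuous function $u_\fc$ by Lemma~\ref{lem:equilibration}.

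For the initial term, the $\V_{0\slow}$ contribution passes to the limit by the uniform convergence $u^\e_{\V_{0\slow}}(0)\to u_{\V_{0\slow}}(0)$ and $\pi^\e_x\to\pi_x>0$, using continuity of $s(\cdot\mid\cdot)$ on $[0,\infty)\times(0,\infty)$. The $\V_1$ contribution is non-negative and is simply dropped. For the $\V_{0\fcycle}$ part, the classical log-sum inequality (equivalently, joint convexity and $1$-homogeneity of $s$) yields, for each $\fc\in\fC$,
\begin{equation*}
  \sum_{x\in\fc} s\bigl(\pi^\e_x u^\e_x(0)\mid\pi^\e_x\bigr) \;\geq\; s\Bigl(\sum_{x\in\fc}\pi^\e_x u^\e_x(0)\Bigm|\sum_{x\in\fc}\pi^\e_x\Bigr) \;=\; s\bigl(\pi^\e_\fc u^\e_\fc(0)\mid \pi^\e_\fc\bigr),
\end{equation*}
and the right-hand side passes to $s(\pi_\fc u_\fc(0)\mid\pi_\fc)$ by the uniform convergence $u^\e_\fC\to u_\fC$ from~\eqref{eq:conv-subseq-fc} and $\pi^\e_\fc\to\pi_\fc$.

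For the slow and damped flux terms, I apply Lemma~\ref{lem:S dual}: for arbitrary $\zeta_r\in C([0,T])$,
\begin{equation*}
  \tilde\J^\e_\slow(u^\e,j^\e) \;\geq\; \sum_{r\in\Rslow}\int_{[0,T]}\!\Bigl[\zeta_r(t)j^\e_r(t)-\kappa_r\pi^\e_{r_-}\!u^\e_{r_-}\!(t)(e^{\zeta_r(t)}-1)\Bigr]\,dt,
\end{equation*}
and the analogous inequality for $\tilde\J^\e_\damped$, with $\kappa_r\pi^\e_{r_-}/\e\to\kappa_r\tilde\pi_{r_-}$. On the right-hand sides the products $\pi^\e_{r_-}u^\e_{r_-}$ converge against the continuous test function $(e^{\zeta_r}-1)$ by the weak-$*$ convergence in~\eqref{eq:conv-subseq-V0f} (identified with $u_\fc$ via Lemma~\ref{lem:equilibration}) or uniformly in~\eqref{eq:conv-subseq-Vslow}, while $j^\e_\Rslow\weakstarto j_\Rslow$ in $L^\C$ pairs with $\zeta_r\in C\subset M^{\C^*}$, and $j^\e_\Rdamped\narrowto j_\Rdamped$ pairs narrowly with $\zeta_r$. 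Taking the supremum over $\zeta$ and invoking the measure-valued version of Lemma~\ref{lem:S dual} for the damped part gives the desired lower bounds $\tilde\J^0_\slow(u,j)$ and $\tilde\J^0_\damped(u,j)$.

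For the fast-cycle term I apply the same dual formula but with test functions rescaled as $\sqrt{\e}\,\zeta_r$, with $\zeta_r\in C([0,T])$:
\begin{align*}
  \tilde\J^\e_\fcycle(u^\e,\tilde\jmath^\e)
  &\geq \sum_{r\in\Rfcycle}\!\int_{[0,T]}\!\Bigl[\sqrt{\e}\,\zeta_r\bigl(\tfrac{1}{\e}\kappa_r\pi^\e_{r_-}u^\e_{r_-}\!+\tfrac{1}{\sqrt\e}\tilde\jmath^\e_r\bigr) - \tfrac{1}{\e}\kappa_r\pi^\e_{r_-}u^\e_{r_-}\!\bigl(e^{\sqrt{\e}\,\zeta_r}-1\bigr)\Bigr]\,dt.
\end{align*}
Expanding $e^{\sqrt{\e}\,\zeta_r}-1=\sqrt{\e}\,\zeta_r+\tfrac12\e\zeta_r^2+O(\e^{3/2})$ produces a cancellation of the $\bigoh(1/\sqrt{\e})$ terms, leaving the integrand $\zeta_r\tilde\jmath^\e_r-\tfrac12\kappa_r\pi^\e_{r_-}u^\e_{r_-}\zeta_r^2+o(1)$. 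Passing to the limit using $\tilde\jmath^\e_\Rfcycle\weakstarto\tilde\jmath_\Rfcycle$ in $L^\C$ and $\pi^\e_{r_-}u^\e_{r_-}\weakstarto\pi_{r_-}u_{r_-}$ in $L^\infty$, and then taking the supremum over $\zeta$, Lemma~\ref{lem:square dual} identifies the limit as $\tfrac12\int\tilde\jmath_r^2/(\kappa_r\pi_{r_-}u_{r_-})\,dt=\tilde\J^0_\fcycle(u,\tilde\jmath)$. Summing the four lower bounds yields the claim; the main technical point to verify carefully is the uniform control of the remainder term from the exponential expansion against the weak-$*$ convergences, which is routine once $\zeta_r$ is bounded in $C([0,T])$.
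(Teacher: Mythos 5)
Your proof takes essentially the same route as the paper: the paper also splits $\tilde\I_0^\e + \tilde\J^\e$ into the four summands and proves lower semicontinuity of each separately in Lemmas~\ref{lem:Gamma lower bound initial}--\ref{lem:Gamma lower bound fcycle}. Your handling of the initial condition (log-sum/Jensen for the $\fC$ part, drop the $\V_1$ part, continuity for $\V_{0\slow}$) matches Lemma~\ref{lem:Gamma lower bound initial}, and your $\sqrt{\e}\,\zeta$-rescaled dual estimate for the fast-cycle term is exactly the computation in Lemma~\ref{lem:Gamma lower bound fcycle}. For the slow/damped terms the paper takes a slightly different presentational route---it rewrites $\tilde\J^\e_\slow$ to exhibit the $\pi^\e\to\pi$ error as a separate $j^\e_r\log(\pi_{r_-}/\pi^\e_{r_-})$ term and then invokes joint lower semicontinuity of $s$ abstractly---but that lower semicontinuity is itself the content of the dual formula in Lemma~\ref{lem:S dual}, so your explicit test-function argument is mathematically the same.

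Two small points to close up. First, you lean on Corollary~\ref{c:compactness}, Lemma~\ref{lem:equilibration} and the structure of $\tilde\J^0$ without making explicit that you may assume $\liminf_\e \tilde\I^\e_0(u^\e(0))+\tilde\J^\e(u^\e,j^\e)<\infty$, which is the reduction that makes these tools available (otherwise there is nothing to prove). Second, and more substantively, your argument bounds each of the four integral expressions from below, but the limit functional $\tilde\J^0$ is declared $+\infty$ unless the limit continuity equations~\eqref{eq:limit cont eq} hold; to conclude $\liminf\geq\tilde\I^0_0+\tilde\J^0$ with the integral formulas you also need to verify these equations for the limit $(u,j)$, which is Lemma~\ref{lem:limit cont eq} (again under the boundedness reduction). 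Finally, for the damped term you should record that the $r_-\in\V_1$ densities converge narrowly, \eqref{eq:conv-subseq-V1}, not weakly-$*$ in $L^\infty$, since that is what makes the measure-valued version of Lemma~\ref{lem:S dual} the right tool there.
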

\begin{proof}
We treat each functional $\tilde\I^\epsilon_0,\tilde\J^\epsilon_\slow$, $\tilde\J^\epsilon_\damped$ and $\tilde\J^\epsilon_\fcycle$ separately, and without loss of generality we may always assume that $\tilde\I^\epsilon_0+\tilde\J^\epsilon\leq C$ for some $C\geq0$ and hence the continuity equations~\eqref{eq:cont eq} hold; otherwise the lower bound is trivial. This is carried out in the next Lemmas~\ref{lem:Gamma lower bound initial}, \ref{lem:Gamma lower bound slow} and \ref{lem:Gamma lower bound fcycle}.
\end{proof}

For the initial condition, recall the definitions of $\tilde\I^\e_0$ and $\tilde\I_0$ from~\eqref{def:I0} and Section~\ref{sec:main results}, and observe that the first one depends on $u_{\V_{0\fcycle}}(0)$ whereas the second depends on $u_\fC(0)$, which may be different, see Remark~\ref{rem:instaneous spread}. Hence the $\Gamma$-convergence of $\tilde\I^\e_0$ to $\tilde\I_0$ does not hold in $\RR^{\V_{0\slow}}\times\RR^{\V_{0\fcycle}}\times\RR^\fC\times\RR^{\V_1}$, but only in the path-space convergence of~\eqref{eq:conv-subseq}.

\begin{lemma}[$\Gamma$-lower bound for the initial condition]
Let $(u^\e,j^\e)_{\epsilon>0}\subset\Theta$ such that 
$\tilde\I^\epsilon_0\big(u^\e(0)\big) + \tilde\J^\epsilon(u^\e,j^\e)\leq C$ converge to $(u,j)\in\Theta$ in the sense of \eqref{eq:conv-subseq}. Then:
\begin{equation*}
  \liminf_{\epsilon\to0}\, \tilde\I^\epsilon_0\big(u^\epsilon(0)\big) \geq \tilde\I^0_0\big(u(0)\big).
\end{equation*}
\label{lem:Gamma lower bound initial}
\end{lemma}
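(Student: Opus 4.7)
The plan is to split $\tilde\I^\epsilon_0$ into its three natural summands over $\V_{0\slow}$, $\V_{0\fcycle}$ and $\V_1$, treat each with a different argument, and then recombine. The desired bound will come out term-by-term.

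For $x\in\V_{0\slow}$, by~\eqref{eq:pi convergence} we have $\pi^\epsilon_x\to\pi_x>0$, and by~\eqref{eq:conv-subseq-Vslow} we have $u^\epsilon_{\V_{0\slow}}\to u_{\V_{0\slow}}$ in $C([0,T];\RR^{\V_{0\slow}})$, hence pointwise at $t=0$. Since $s(\cdot\mid\cdot)$ is continuous on $(0,\infty)\times(0,\infty)$ (and lower semicontinuous at the boundary), each summand $s\big(\pi^\epsilon_x u^\epsilon_x(0)\mid\pi^\epsilon_x\big)$ converges to $s\big(\pi_x u_x(0)\mid\pi_x\big)$.

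For the fast cycles, the difficulty is that $u^\epsilon_{\V_{0\fcycle}}$ converges only weakly-$*$ in $L^\infty$, so we cannot evaluate the limit at $t=0$. The trick is to aggregate \emph{before} passing to the limit. For each $\fc\in\fC$ we apply the log-sum inequality (equivalently, joint convexity and $1$-homogeneity of $s$) to obtain
\begin{equation*}
  \sum_{x\in\fc} s\big(\pi^\epsilon_x u^\epsilon_x(0)\mid\pi^\epsilon_x\big)
  \;\geq\; s\Big(\textstyle\sum_{x\in\fc}\pi^\epsilon_x u^\epsilon_x(0) \,\Bigm|\, \sum_{x\in\fc}\pi^\epsilon_x\Big)
  \;=\; s\big(\pi^\epsilon_\fc u^\epsilon_\fc(0)\mid\pi^\epsilon_\fc\big),
\end{equation*}
using~\eqref{eq:pic} and~\eqref{eq:cont eq sum density eps} in the last equality. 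Now $\pi^\epsilon_\fc\to\pi_\fc>0$ by summing~\eqref{eq:pi convergence}, and by~\eqref{eq:conv-subseq-fc} we have $u^\epsilon_\fc(0)\to u_\fc(0)$, so continuity of $s$ yields the lower bound $s\big(\pi_\fc u_\fc(0)\mid\pi_\fc\big)$. Summing over $\fc\in\fC$ recovers the fast-cycle part of $\tilde\I^0_0$.

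For $x\in\V_1$, we simply discard the term, since $s\geq 0$. Adding the three contributions gives
\begin{equation*}
  \liminf_{\epsilon\to0}\tilde\I^\epsilon_0(u^\epsilon(0))
  \;\geq\; \sum_{x\in\V_{0\slow}} s\big(\pi_x u_x(0)\mid\pi_x\big) + \sum_{\fc\in\fC} s\big(\pi_\fc u_\fc(0)\mid\pi_\fc\big)
  \;=\; \tilde\I^0_0(u(0)).
\end{equation*}
The main (small) obstacle is the fast-cycle piece, since pointwise convergence of $u^\epsilon_x(0)$ for $x\in\V_{0\fcycle}$ is unavailable; it is handled by the log-sum inequality which reduces everything to the uniformly-convergent aggregated variable $u^\epsilon_\fc$.
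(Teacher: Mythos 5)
Your proof is correct and follows essentially the same route as the paper: split $\tilde\I^\epsilon_0$ over $\V_{0\slow}$, $\V_{0\fcycle}$, $\V_1$; use uniform convergence of $u^\epsilon_{\V_{0\slow}}(0)$; discard the $\V_1$ terms by nonnegativity; and, crucially, aggregate the fast-cycle terms before passing to the limit to circumvent the lack of pointwise convergence of $u^\epsilon_{\V_{0\fcycle}}(0)$. The only cosmetic difference is that you invoke the log-sum inequality where the paper writes $s(\pi^\epsilon_x u^\epsilon_x(0)\mid\pi^\epsilon_x)=\pi^\epsilon_x\,s(u^\epsilon_x(0)\mid 1)$ and applies Jensen's inequality with weights $\pi^\epsilon_x/\pi^\epsilon_\fc$; these are the same inequality.
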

\begin{proof}
By uniform convergence, $u^\epsilon_{\V_{0\slow}}(0)\to u_{\V_{0\slow}}(0)$ so that
\begin{equation*}
  \sum_{x\in\V_{0\slow}} s\big(\pi^\epsilon_x u^\epsilon_x(0)\mid \pi^\epsilon_x \big) \to \sum_{x\in\V_{0\slow}} s\big(\pi_x u_x(0)\mid \pi_x \big),
\end{equation*}
and clearly
\begin{equation*}
  \sum_{x\in\V_1} s\big(\pi^\epsilon_x u^\epsilon_x(0)\mid \pi^\epsilon_x \big)\geq0.
\end{equation*}
Lemma~\ref{lem:fast cycles} shows that every $x\in\V_{0\fcycle}$ is part of exactly one component $\fc\in\fC$.
From \eqref{eq:pic}, Jensen's inequality and the continuity equation~\eqref{eq:cont eq sum density eps},
\begin{align*}
  \sum_{x\in\V_{0\fcycle}} s\big(\pi^\epsilon_x u^\epsilon_x(0)\mid \pi^\epsilon_x \big)
    &= \sum_{\fc\in\fC}  \pi^\epsilon_\fc \sum_{x\in \fc} s\big(u^\epsilon_x(0)\mid 1 \big) \frac{\pi^\epsilon_x}{\pi^\epsilon_\fc}\\
    &\geq \sum_{\fc\in\fC}  \pi^\epsilon_\fc  s\big( {\textstyle \sum_{x\in \fc} u^\epsilon_x(0) \tfrac{\pi^\epsilon_x}{\pi^\epsilon_\fc}   } \mid 1 \big) \\
    &= \sum_{\fc\in\fC}  s\big(\pi^\epsilon_\fc u^\epsilon_\fc(0) \mid \pi^\epsilon_\fc \big) \to \sum_{\fc\in\fC}  s\big(\pi_\fc u_\fc(0) \mid \pi_\fc \big),
\end{align*}
again by uniform convergence of $u^\e_\fC$.
\end{proof}

\begin{lemma}[$\Gamma$-lower bound for the slow and damped fluxes] 

Let $(u^\e,j^\e)_{\epsilon>0}\subset\Theta$ such that 
$\tilde\I^\epsilon_0\big(u^\e(0)\big) + \tilde\J^\epsilon(u^\e,j^\e)\leq C$ converge to $(u,j)\in\Theta$ in the sense of \eqref{eq:conv-subseq}. Then:
\begin{align*}
  \liminf_{\epsilon\to0}\, \tilde\J^\epsilon_\slow(u^\epsilon_{\V_{0\slow}},u^\epsilon_{\V_{0\fcycle}},j^\epsilon_\Rslow) \geq \tilde\J^0_\slow(u_{\V_{0\slow}},u_{\V_{0\fcycle}},j_\Rslow),
\intertext{and}
  \liminf_{\epsilon\to0}\, \tilde\J^\epsilon_\damped(u^\epsilon_{\V_1},j^\epsilon_\Rdamped) \geq \tilde\J^0_\damped(u_{\V_1},j_\Rdamped).
\end{align*}
\label{lem:Gamma lower bound slow}
\end{lemma}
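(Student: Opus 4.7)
The approach is to linearise each entropic functional via its convex dual formulation (Lemma~\ref{lem:S dual}), pass to the $\liminf$ for each fixed continuous test function, and then take the supremum to recover the target. Without loss of generality I assume $\tilde\I^\e_0(u^\e(0))+\tilde\J^\e(u^\e,j^\e)\leq C$ uniformly in $\e$, so that all the compactness and identification results of Section~\ref{sec:network} apply along the given sequence; otherwise the lower bound is trivial.

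For the slow part, fix $\zeta_r\in C([0,T])$ for each $r\in\Rslow$. Because all densities and fluxes have $L^1$-densities for fixed $\e>0$, Lemma~\ref{lem:S dual} gives
\[
\tilde\J^\e_\slow \;\geq\; \sum_{r\in\Rslow}\int_{[0,T]}\!\bigl[\zeta_r(t)\,j^\e_r(t) - \kappa_r\pi^\e_{r_-}u^\e_{r_-}(t)(e^{\zeta_r(t)}-1)\bigr]\,dt.
\]
By Lemma~\ref{lem:node categorisation}, $r_-\in \V_{0\slow}\cup\V_{0\fcycle}$, so that one has either uniform convergence of $u^\e_{r_-}$ by~\eqref{eq:conv-subseq-Vslow} or weak-$\ast$ $L^\infty$ convergence by~\eqref{eq:conv-subseq-V0f}; combined with $\pi^\e_{r_-}\to\pi_{r_-}$ and with $e^{\zeta_r}-1\in C([0,T])\subset L^1([0,T])$, the ``energy'' integral converges. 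The ``flux'' integral $\int \zeta_r\,j^\e_r\,dt$ converges by~\eqref{eq:conv-subseq-jslow}: this is admissible because $\zeta_r$ is bounded on $[0,T]$ and $\C^*$ is finite on all of $\RR$, hence $\zeta_r\in M^{\C^*}([0,T])$. In the fcycle case, Lemma~\ref{lem:equilibration} identifies $u_{r_-}$ almost everywhere with the continuous $u_\fc$, matching the convention used in $\tilde\J^0_\slow$. Taking the supremum over all $\zeta_r$ and reading Lemma~\ref{lem:S dual} backwards recovers exactly $\tilde\J^0_\slow(u_{\V_{0\slow}},u_{\V_{0\fcycle}},j_\Rslow)$.

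For the damped part, every $r\in\Rdamped$ has $r_-\in\V_1$ and $\tfrac1\e\pi^\e_{r_-}\to\tilde\pi_{r_-}$, and the same argument runs with the measure-valued version of Lemma~\ref{lem:S dual}:
\[
\tilde\J^\e_\damped \;\geq\; \sum_{r\in\Rdamped}\!\int_{[0,T]}\!\bigl[\zeta_r(t)\,j^\e_r(t) - \tfrac1\e\kappa_r\pi^\e_{r_-}u^\e_{r_-}(t)(e^{\zeta_r(t)}-1)\bigr]\,dt.
\]
Narrow convergence $j^\e_r\narrowto j_r$ from~\eqref{eq:conv-subseq-jdamped} paired with the continuous $\zeta_r$ handles the first integrand, while narrow convergence of the rescaled density $\tfrac1\e\kappa_r\pi^\e_{r_-}u^\e_{r_-}(t)\,dt\narrowto\kappa_r\tilde\pi_{r_-}u_{r_-}(dt)$, obtained by combining~\eqref{eq:conv-subseq-V1} with the scalar limit $\tfrac1\e\pi^\e_{r_-}\to\tilde\pi_{r_-}$, paired with the continuous $e^{\zeta_r}-1$ handles the second. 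Taking the supremum over $\zeta_r$ yields, via Lemma~\ref{lem:S dual}, exactly $\int s(j_r\mid\kappa_r\tilde\pi_{r_-}u_{r_-})(dt)$ in the sense of~\eqref{eq:measure-valued s}, which is $\tilde\J^0_\damped$.

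I do not foresee any real obstacle. The only care points are the admissibility of continuous test functions in the three different weak topologies involved (weak-$\ast$ $L^\C$, weak-$\ast$ $L^\infty$, and narrow on $\M$), and, in the damped case, the fact that $u_{r_-}$ may become genuinely measure-valued -- which is precisely why the measure-valued form of Lemma~\ref{lem:S dual} is needed and why the $\infty$-convention of~\eqref{eq:measure-valued s} (for $j_r\not\ll u_{r_-}$) is enforced automatically by the dual supremum. The standard minimax inequality $\liminf_\e\sup_\zeta\geq\sup_\zeta\liminf_\e$ then closes both bounds with no further compactness input.
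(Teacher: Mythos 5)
Your proof is correct and takes essentially the same route as the paper: both rely on the convex dual formulation of Lemma~\ref{lem:S dual} to reduce the lower bound to convergence of linear functionals against continuous test functions. The paper is marginally more compressed — it first peels off the $\pi^\e\to\pi$ discrepancy into correction terms via the rewriting~\eqref{eq:J0slow change pi} and then cites joint lower semicontinuity as a black box — whereas you unfold that lsc argument explicitly and absorb the $\pi^\e\to\pi$ convergence directly into the passage to the limit, but these are the same argument at different levels of explicitness.
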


\begin{proof}
Recall the uniform $L^1$-bounds on the slow and damped fluxes from Lemmas~\ref{lem:bound slow fluxes} and~\ref{l:bdd-damped-fluxes}. The statement for slow fluxes follows directly from rewriting
\begin{equation}
  \tilde\J^0_\slow(u^\epsilon_{\V_0},j^\epsilon_\Rslow) = \sum_{r\in\Rslow} \int_{[0,T]}\!\Big\lbrack s\Big(j^\epsilon_r(t)\Bigm| \kappa_r \pi_{r_-}\!u^\epsilon_{r_-}\!(t)\Big) + j^\epsilon_r(t)\log\mfrac{\pi_{r_-}}{\pi^\epsilon_{r_-}} -\pi^\epsilon_{r_-} + \pi_{r_-} \Big\rbrack\,dt,
\label{eq:J0slow change pi}
\end{equation}
together with the joint lower semicontinuity from Lemma~\ref{lem:S dual}, and $\pi^\epsilon\to\pi>0$. The argument for the damped fluxes is the same after generalising to possible measure-valued trajectories in time.
\end{proof}

\begin{lemma}[$\Gamma$-lower bound for the fast cycle fluxes]
Let $(u^\e,j^\e)_{\epsilon>0}\subset\Theta$ such that 
$\tilde\I^\epsilon_0\big(u^\e(0)\big) + \tilde\J^\epsilon(u^\e,j^\e)\leq C$ converge to $(u,j)\in\Theta$ in the sense of \eqref{eq:conv-subseq}. Then:
\begin{equation*}
  \liminf_{\epsilon\to0}\, \tilde\J^\epsilon_\fcycle(u^\epsilon_{\V_{0\fcycle}},\tjeps_\Rfcycle) \geq \tilde\J^0_\fcycle(u_{\V_{0\fcycle}},\tilde\jmath_\Rfcycle).
\end{equation*}
\label{lem:Gamma lower bound fcycle}
\end{lemma}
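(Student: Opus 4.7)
My plan is to use convex duality to reduce the problem to passing to the limit in a linear expression. Since we may assume $\tilde\I^\epsilon_0+\tilde\J^\epsilon\leq C$, all densities and fluxes at fixed $\epsilon>0$ have $L^1$-densities by~\eqref{eq:flux RF}, so Lemma~\ref{lem:S dual} applies pointwise in the edges and time. Concretely, for every test function $\xi\in C([0,T];\RR^{\Rfcycle})$,
\begin{equation*}
\tilde\J^\epsilon_\fcycle(u^\epsilon_{\V_{0\fcycle}},\tjeps_\Rfcycle) \geq \sum_{r\in\Rfcycle}\int_{[0,T]}\!\Big[\xi_r\Big(\tfrac{1}{\epsilon}\kappa_r\pi^\epsilon_{r_-}\!u^\epsilon_{r_-} + \tfrac{1}{\sqrt{\epsilon}}\tjeps_r\Big) - \tfrac{1}{\epsilon}\kappa_r\pi^\epsilon_{r_-}\!u^\epsilon_{r_-}(e^{\xi_r}-1)\Big]\,dt.
\end{equation*}

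The key step is to insert the natural rescaling $\xi_r = \sqrt{\epsilon}\,\zeta_r$ for an arbitrary $\zeta\in C([0,T];\RR^{\Rfcycle})$. After cancellation of the divergent $\tfrac{1}{\sqrt\epsilon}\kappa_r\pi^\epsilon_{r_-}u^\epsilon_{r_-}\zeta_r$ terms, the inequality reduces to
\begin{equation*}
\tilde\J^\epsilon_\fcycle \geq \sum_{r\in\Rfcycle}\int_{[0,T]}\!\Big[\zeta_r\tjeps_r - \kappa_r\pi^\epsilon_{r_-}\!u^\epsilon_{r_-}\,g_\epsilon(\zeta_r)\Big]\,dt, \qquad g_\epsilon(z):=\tfrac{1}{\epsilon}\big(e^{\sqrt{\epsilon}z}-1-\sqrt{\epsilon}z\big).
\end{equation*}
The power series $g_\epsilon(z)=\tfrac{z^2}{2}+\tfrac{\sqrt{\epsilon}z^3}{6}+\tfrac{\epsilon z^4}{24}+\cdots$ shows that $g_\epsilon\to z^2/2$ uniformly on bounded sets of $z$, which is exactly the scaling that produces the quadratic limit functional.

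I then pass to the $\liminf$ as $\epsilon\to0$ term by term. The flux contribution $\int \zeta_r\tjeps_r\,dt$ converges to $\int \zeta_r\tilde\jmath_r\,dt$ because $\tjeps_\Rfcycle \weakstarto \tilde\jmath_\Rfcycle$ in $L^\C$ and the continuous test function $\zeta_r$ lies in the predual $M^{\C^*}\!([0,T];\RR^{\Rfcycle})$. The coefficient $\kappa_r\pi^\epsilon_{r_-}u^\epsilon_{r_-}$ has $\pi^\epsilon_{r_-}\to\pi_{r_-}>0$ (since $r_-\in\V_{0\fcycle}$) and $u^\epsilon_{\V_{0\fcycle}}\weakstarto u_{\V_{0\fcycle}}$ in $L^\infty$, so the quadratic part $\int\kappa_r\pi^\epsilon_{r_-}u^\epsilon_{r_-}\tfrac{\zeta_r^2}{2}\,dt$ converges to the expected limit, using that $\zeta_r^2\in L^1$. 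The residual $\int\kappa_r\pi^\epsilon_{r_-}u^\epsilon_{r_-}\,[g_\epsilon(\zeta_r)-\tfrac{\zeta_r^2}{2}]\,dt$ vanishes because $\lVert\zeta\rVert_\infty<\infty$ provides uniform smallness of the bracket and $\kappa_r\pi^\epsilon_{r_-}u^\epsilon_{r_-}$ is uniformly bounded in $L^1$ by Lemma~\ref{l:bdd-densities}. Collecting the three limits,
\begin{equation*}
\liminf_{\epsilon\to0}\tilde\J^\epsilon_\fcycle \geq \sum_{r\in\Rfcycle}\int_{[0,T]}\!\Big[\zeta_r\tilde\jmath_r - \tfrac12\kappa_r\pi_{r_-}\!u_{r_-}\zeta_r^2\Big]\,dt.
\end{equation*}

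Finally, I take the supremum over $\zeta\in C([0,T];\RR^{\Rfcycle})$ edge by edge. By Lemma~\ref{lem:square dual} applied with $u=\kappa_r\pi_{r_-}u_{r_-}$ and $\tilde\jmath=\tilde\jmath_r$, this supremum equals $\tfrac{1}{2}\int\tilde\jmath_r^2/(\kappa_r\pi_{r_-}u_{r_-})\,dt$ whenever $\tilde\jmath_r\ll u_{r_-}$ and $+\infty$ otherwise, which is precisely $\tilde\J^0_\fcycle(u_{\V_{0\fcycle}},\tilde\jmath_\Rfcycle)$. The main obstacle I anticipate is keeping the Taylor error uniform: however, because $\zeta$ is continuous on the compact interval $[0,T]$ it is bounded by some $M$, and the elementary estimate $|g_\epsilon(z)-z^2/2|\leq \tfrac{\sqrt{\epsilon}|z|^3}{6}e^{\sqrt{\epsilon}|z|}$ for $|z|\leq M$ makes the control routine.
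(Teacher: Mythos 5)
Your argument is correct and coincides with the paper's proof: both substitute $\xi = \sqrt{\epsilon}\,\zeta$ into the dual characterization of $s(\cdot\mid\cdot)$ from Lemma~\ref{lem:S dual}, observe that $\tfrac1\epsilon e^{\sqrt{\epsilon}\zeta}-\tfrac1\epsilon-\tfrac1{\sqrt{\epsilon}}\zeta\to\tfrac12\zeta^2$ uniformly, pass to the limit using the weak-$*$ convergences of $\tjeps_{\Rfcycle}$ in $L^\C$ and $u^\e_{\V_{0\fcycle}}$ in $L^\infty$ together with the uniform $L^1$ bound on $\pi^\epsilon u^\epsilon$, and conclude via Lemma~\ref{lem:square dual}. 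Your write-up makes the Taylor error estimate and the separate treatment of the three terms a bit more explicit than the paper, but the mechanism is identical.
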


A similar statement is proven in \cite[Th.~2]{BonaschiPeletier16}.

\begin{proof}
To simplify notation we prove the statement for one arbitrary $r\in\Rfcycle$. We first note that
\begin{align*}
  \pi^\epsilon_{r_-}\!u^\epsilon_{r_-} \weakto \pi_{r_-}\!u^\epsilon_{r_-} \quad\text{in } L^1([0,T]) 
  \qquad\text{and}\qquad
  \sup_{\epsilon>0} \, \lVert \pi^\epsilon_{r_-}\!u^\epsilon_{r_-} \rVert_{L^1}<\infty,
\end{align*}
and that for any test function $\zeta\in C([0,T])$,
\begin{align*}
  \tfrac1\epsilon e^{\sqrt{\epsilon}\zeta}-\tfrac1\epsilon-\tfrac1{\sqrt{\epsilon}}\zeta \xrightarrow[\epsilon\to0]{L^\infty} \tfrac12\zeta^2.
\end{align*}
It then follows that the following integral converges:
\begin{align*}
  \int_{[0,T]}\!\kappa_r\pi^\epsilon_{r_-}\!u^\epsilon_{r_-}\!(t)\big(\tfrac1\epsilon e^{\sqrt{\epsilon}\zeta(t)}-\tfrac1\epsilon-\tfrac1{\sqrt{\epsilon}}\zeta(t)\big)\,dt
    \to
  \tfrac12 \int_{[0,T]}\!\kappa_r\pi_{r_-}\!u_{r_-}\!(t)\zeta(t)^2\,dt.
\end{align*}
Using the dual formulations of Lemmas~\ref{lem:S dual} and \ref{lem:square dual},
\begin{align*}
  &\liminf_{\epsilon\to0}\int_{[0,T]}\!s\Big(\tfrac{1}{\epsilon}\kappa_r \pi^\epsilon_{r_-}\!u^\epsilon_{r_-}\!(t) + \mfrac{1}{\sqrt{\epsilon}}\tjeps_r(t)  \Big| \tfrac1\epsilon\kappa_r\pi^\epsilon_{r_-}\!u^\epsilon_{r_-}\!(t)\Big)\\
  &\qquad \geq
  \sup_{\zeta\in C([0,T])} \liminf_{\epsilon\to0} \int_{[0,T]}\!\Big\lbrack\zeta(t)\tjeps_r(t)
  -
  \kappa_r\pi^\epsilon_{r_-}\!u^\epsilon_{r_-}\!(t)\big(\tfrac1\epsilon e^{\sqrt{\epsilon}\zeta(t)}-\tfrac1\epsilon-\tfrac1{\sqrt{\epsilon}}\zeta(t)\big)\Big\rbrack\,dt\\
  &\qquad =
  \sup_{\zeta\in C([0,T])} \int_{[0,T]}\!\Big\lbrack\zeta(t)\tilde\jmath_r(t) - \tfrac12\kappa_r\pi_{r_-}\!u_{r_-}\!(t)\zeta(t)^2\Big\rbrack\,dt\\
  &\qquad =
    \begin{cases}
    \displaystyle
      \tfrac12 \int_{[0,T]}\! \frac{\tilde\jmath_r(t)^2}{\kappa_r\pi_{r_-}\!u_{r_-}\!(t)}\,dt, &\text{if } \tilde\jmath_r\in L^2_{1/\kappa_r\pi_{r_-}\!u_{r_-}}\!\!([0,T]),\\
      \infty, &\text{otherwise}.
    \end{cases}
\end{align*}
\end{proof}

\subsection{$\Gamma$-recovery sequence}
\label{subsec:Gamma rec seq}

For each of the four functionals separately, convergence is easily shown using a constant sequence $(u^\e,j^\e)\equiv(u,j)$. However, such a constant sequence is not a valid recovery sequence as it violates the continuity equations~\eqref{eq:cont eq}. The construction of the recovery sequence is summarised in the following proposition.

\begin{proposition}[$\Gamma$-recovery sequence] For any $(u,j)$ in $\Theta$ there exists a sequence $(u^\e,j^\e)_\e\subset\Theta$ such that $(u^\e,j^\e)\to(u,j)$ in $\Theta$ and 
\begin{equation*}
  \lim_{\e\to0} \tilde\I^\epsilon_0\big(u^\e(0)\big)+\tilde\J^\epsilon(u^\e,j^\e) = \tilde\I^0_0\big(u(0)\big)+\tilde\J^0(u,j).
\end{equation*}
\label{prop:rec seq}
\end{proposition}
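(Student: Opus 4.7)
The plan is to construct the recovery sequence in two stages: first reduce to a dense subclass of ``regular'' limits, then provide an explicit construction. For the reduction, I would argue that by time-mollification, a small positive perturbation, and replacement of $u_x$ on each fast cycle $\fc$ by the cycle-average $u_\fc$ (including at $t=0$), the functional $\tilde\I^0_0+\tilde\J^0$ is approximated arbitrarily well by values attained on $(u,j)$ that are smooth, strictly positive, and for which $u_{\V_1}$ and $j_\Rdamped$ are absolutely continuous with bounded densities, $u_x\equiv u_\fc$ on each $\fc$, and the limit continuity equations \eqref{eq:limit cont eq} hold classically. A standard diagonal argument then reduces the construction to this regular subclass.

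For such regular $(u,j)$, I would define $u^\e_x := (\pi_x/\pi^\e_x)u_x$ on $\V_{0\slow}$, $u^\e_x := u_\fc$ on each $\fc\in\fC$ (which is compatible with \eqref{eq:cont eq sum density eps} and forces $u^\e_\fc=u_\fc$), and $u^\e_x := (\e\tilde\pi_x/\pi^\e_x)u_x$ on $\V_1$, so that $\pi^\e u^\e$ coincides with the natural asymptotic mass. Take $j^\e_\Rslow := j_\Rslow$ and $j^\e_\Rdamped := j_\Rdamped + \e\delta^\e$, where $\delta^\e$ is an $O(1)$ correction on damped edges that makes \eqref{eq:cont eq V1} hold exactly; solvability of the linear problem for $\delta^\e$ follows from the limit equation \eqref{eq:limit cont eq V1} together with the structure of damped edges identified in Lemma~\ref{lem:node categorisation}. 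Finally, set $\tjeps_\Rfcycle := \tilde\jmath_\Rfcycle + \sqrt\e\,\xi^\e$ where $\xi^\e$ solves a discrete Poisson equation $(-\div\xi^\e)_x = \Delta^\e_x$ on each $\fc$. The residuals $\Delta^\e_x$ are bounded thanks to the equilibrium equation for $\pi^\e$, and by Lemma~\ref{lem:fast cycles} together with \eqref{eq:limit cont eq C} the cycle-sum $\sum_{x\in\fc}\Delta^\e_x$ is zero up to an $O(\sqrt\e)$ error that can be absorbed into $j^\e_\Rslow$, making the Poisson problem solvable.

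It then remains to verify that each piece of the functional converges. Convergence of $\tilde\I^\e_0$ follows from continuity of $s(\cdot\mid\cdot)$ and $\pi^\e\to\pi$, using that $u^\e_x(0)=u_\fc(0)$ on each $\fc$ so that the Jensen step of Lemma~\ref{lem:Gamma lower bound initial} becomes an equality. Convergence of $\tilde\J^\e_\slow$ and $\tilde\J^\e_\damped$ follows directly from pointwise convergence of the integrands (using $\pi^\e_{r_-}\to\pi_{r_-}$ on $\V_0$ and $\pi^\e_{r_-}/\e\to\tilde\pi_{r_-}$ on $\V_1$) together with dominated convergence on bounded regular densities, and the $O(\e)$ perturbation of $j^\e_\Rdamped$ does not affect this limit. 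For $\tilde\J^\e_\fcycle$ the pointwise expansion $s(\tfrac1\e A+\tfrac{1}{\sqrt\e}B\mid \tfrac1\e A)=\tfrac{B^2}{2A}+O(\sqrt\e)$, valid for $A$ bounded above and below away from zero and $B$ bounded, applied with $A=\kappa_r\pi^\e_{r_-}u_\fc$ and $B=\tjeps_r$, yields the quadratic limit. The main obstacle is the simultaneous enforcement of the three $\e$-continuity equations at all node classes: the regularity reduction and the two corrections $\e\delta^\e$ (absorbing $\e\tilde\pi_x\dot u_x$ at $\V_1$) and $\sqrt\e\,\xi^\e$ (absorbing the order-one imbalance at $\V_{0\fcycle}$ coming from the non-equilibrium of $\pi^\e$) are precisely what is needed to render the constructed sequence admissible while preserving strict positivity and boundedness so that the quadratic expansion applies.
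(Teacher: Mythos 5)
Your two-stage plan (regularize, then construct explicitly, then diagonalize) mirrors the paper's overall architecture, but the explicit construction takes a genuinely different route from the paper's, and as stated it has a real gap. The paper does not prescribe $u^\e$ by a closed formula; it only sets the initial values $u^\e_x(0)$, routes a small amount of mass from a single node $\hat x\in\V_{0\slow}$ along chains $Q(\hat x,y)$ into each $y\in\V_1$ (so that the $\V_1$-balances absorb $\pi^\e_y\dot u_y$ exactly, while the $\hat x$ and fast-cycle equations remain consistent by bookkeeping), and then lets $u^\e$ on $\V_{0\fcycle}$ be \emph{defined} by the $\e$-continuity equations; the uniform convergence $u^\e_x\to u_\fc$ is then recovered via Lemma~\ref{lem:strengthened equilibration}. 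By contrast, you fix $u^\e_x\equiv u_\fc$ on $\fc$ and then ask the flux corrections $\delta^\e$ (damped) and $\xi^\e$ (fast) to solve a divergence/Poisson system exactly. That system is over-determined: the $\V_{0\slow}$-balances force $\sum_{r\in\Rdamped:r_+=x}\delta^\e_r=0$ there, the $\V_1$-balances force a prescribed divergence of $\delta^\e$ on $\V_1$, and the solvability of the Poisson problem for $\xi^\e$ on each $\fc$ requires $\sum_{x\in\fc}\Delta^\e_x=0$ \emph{exactly}; unwinding, this imposes $\e\sum_{r\in\Rdamped:r_+\in\fc}\delta^\e_r=(\pi^\e_\fc-\pi_\fc)\dot u_\fc$ for each $\fc$, while summing the $\V_1$-balances already pins down $\sum_{r\in\Rdamped:r_+\in\V_{0\fcycle}}\delta^\e_r=-\sum_{x\in\V_1}\tilde\pi_x\dot u_x$. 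These two requirements are independent $o(1)$ quantities with no reason to agree, so the linear system is generically inconsistent. ``Absorbing the error into $j^\e_\Rslow$'' is not a fix without further argument, because a slow-flux perturbation that injects mass into $\fc$ must come from (and hence disturb) some $\V_{0\slow}$- or $\V_1$-node, re-coupling everything; also there is no rate control on $\pi^\e\to\pi$ in the paper, so the claimed $O(\sqrt\e)$ size of the cycle-sum residual is unjustified ($o(1)$ is all that is available).

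A second, smaller gap: your regularization only makes densities strictly positive, not the damped fluxes. Since $\delta^\e$ is the solution of a linear problem its sign is uncontrolled, so $j^\e_r=j_r+\e\delta^\e_r$ can become negative if $j_r=0$ on a set of positive measure, making $\tilde\J^\e=\infty$. The paper handles this by a further regularization (``Step 3'' of Lemma~\ref{lem:regularise uj}) that pushes a $\delta$-sized constant flux around a cycle through all damped edges, ensuring a uniform lower bound $j^\delta_r\gtrsim\delta$ before the $\e$-perturbation of size $O(\e)$ is applied. Your quadratic expansion $s(\tfrac1\e A+\tfrac1{\sqrt\e}B\mid\tfrac1\e A)=\tfrac{B^2}{2A}+O(\sqrt\e)$ and the dominated-convergence arguments for $\tilde\J^\e_\slow,\tilde\J^\e_\damped$ and for $\tilde\I^\e_0$ are correct once the admissibility and positivity issues are resolved, but as written the construction does not produce a sequence on which $\tilde\J^\e$ is finite.
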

\begin{proof}
In Lemma~\ref{lem:regularise uj} we first show that $(u,j)$ can be approximated by a regularised $(u^\delta,j^\delta)$ such that the limit functional converges, i.e.\ such that $\tilde\I^0\big(u^\delta(0))+\tilde\J^0(u^\delta,j^\delta)\to\tilde\I^0\big(u(0))+\tilde\J^0(u,j)$ as $\delta\to0$. In Lemma~\ref{lem:rec seq regularised} we construct a recovery sequence $(u^\epsilon,j^\epsilon)$ corresponding to such regularised $(u^\delta,j^\delta)$, and then use a diagonal argument to construct a recovery sequence for arbitrary $(u,j)$, see for example~\cite[Prop.~6.2]{Duong2013a}. 
\end{proof}

\begin{remark} So far, we only assumed $\sum_{x\in\V}\pi^\epsilon_x=1$, whereas the total mass $\sum_{x\in\V} \pi^\epsilon_x u^\epsilon_x(t)$ is only bounded above by~\eqref{eq:Linf bound}. All arguments in this paper can be extended to the case where the total mass is fixed. In that case the construction of the recovery sequence becomes slightly more involved, since adding mass to certain nodes must be balanced by subtracting mass from other nodes.
\end{remark}

\begin{lemma}[Approximation of the limit functional]
Let $(u,j)\in\Theta$ such that $\tilde\I_0^0(u(0))+\tilde\J^0(u,j)<\infty$, so $(u,j)$ satisfies the limit continuity equations~\eqref{eq:limit cont eq}. Then there exists a sequence $(u^\delta,j^\delta)_{\delta>0} \in \Theta$ such that for each $\delta>0$,
\begin{enumerate}
\item $(u^\delta,j^\delta)\in C_b^\infty\big([0,T];\RR^{\V_{0\slow}}\times\RR^{\V_{0\fcycle}}\times\RR^\fC\times\RR^{\V_1}\times\RR^{\R_\slow}\times\RR^{\R_\damped}\times \RR^{\R_\fcycle}\big)$, \label{it:smoothed uj}
\item $(u^\delta,j^\delta)$ satisfies the limit continuity equations~\eqref{eq:limit cont eq}, \label{it:conteq uj}
\item $\inf_{t\in[0,T]} u^\delta_x(t)>0$ for all $x\in\V_{0\slow}\cup\V_{0\fcycle}\cup\fC\cup\V_1$, \label{it:mass bdd below}
\item $j^\delta_r \geq \sum_{x\in\V_1} \delta \tilde \pi_x \lVert \dot u^\delta_x \rVert_{L^\infty}$ for all $r\in\Rslow\cup\Rdamped$, \label{it:added uj}
\enumeratext{and as $\delta\to0$,}
\item $(u^\delta,j^\delta) \to (u,j)$ in $\Theta$,
\item $\tilde\I_0^0(u^\delta(0))+\tilde\J^0(u^\delta,j^\delta)\to \tilde\I_0^0(u(0))+\tilde\J^0(u,j)$.
\end{enumerate}
\label{lem:regularise uj}
\end{lemma}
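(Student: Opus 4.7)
The plan is to combine time mollification with a convex combination against a stationary equilibrium pair. Both ingredients preserve the limit continuity equations---the first because convolution commutes with differentiation, the second because any convex combination of solutions of the linear constraints~\eqref{eq:limit cont eq} is again a solution---so property~(ii) comes for free throughout. As the reference equilibrium I take $(\mathbf{1},j^{\mathrm{eq}})$ with $j^{\mathrm{eq}}_r:=\kappa_r\pi_{r_-}$ for $r\in\Rslow$, $j^{\mathrm{eq}}_r:=\kappa_r\tilde\pi_{r_-}$ for $r\in\Rdamped$, and $\tilde\jmath^{\mathrm{eq}}_r:=0$ for $r\in\Rfcycle$. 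Stationarity of $\pi^\epsilon$, passed to the limit via~\eqref{eq:pi convergence}, forces $j^{\mathrm{eq}}$ to be balanced at every node, so $(\mathbf{1},j^{\mathrm{eq}})$ indeed solves~\eqref{eq:limit cont eq} and annihilates $\tilde\I^0_0+\tilde\J^0$ because $s(a\mid a)=0$.

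Next I extend $(u,j)$ to all of $\RR$ by $u(t)\equiv u(0)$, $j(t)\equiv j^{\mathrm{eq}}$ for $t\leq 0$, and symmetrically beyond $T$. The extended pair still solves the continuity equations in the distributional sense on $\RR$: $u_{\V_{0\slow}\cup\fC}$ is continuous at the endpoints so $\dot u$ carries no Dirac masses, while at $\V_1$-nodes the equation reduces to $\div j=0$, which holds on both sides by stationarity of $j^{\mathrm{eq}}$ and by~\eqref{eq:limit cont eq V1}. Convolving each component with a standard symmetric mollifier $\eta^w$ of width $w>0$ then produces a $C^\infty$ pair $(u^{\mathrm{mol},w},j^{\mathrm{mol},w})$ still satisfying~\eqref{eq:limit cont eq} on $[0,T]$. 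I then set
\[
u^\delta:=(1-\alpha_\delta)\,u^{\mathrm{mol},w_\delta}+\alpha_\delta\,\mathbf{1},\qquad j^\delta:=(1-\alpha_\delta)\,j^{\mathrm{mol},w_\delta}+\alpha_\delta\,j^{\mathrm{eq}},
\]
with positive $w_\delta,\alpha_\delta\to 0$ to be calibrated below.

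Properties~(i)–(iii) and~(v) (once $w_\delta,\alpha_\delta\to 0$) are then immediate. Property~(iv) is the main obstacle: one has $j^\delta_r\geq \alpha_\delta\, j^{\mathrm{eq}}_{\min}$ with $j^{\mathrm{eq}}_{\min}:=\min_{r\in\Rslow\cup\Rdamped}j^{\mathrm{eq}}_r>0$, and this must dominate $\delta\sum_{x\in\V_1}\tilde\pi_x\|\dot u^\delta_x\|_\infty$. Since $u_{\V_1}$ is merely a bounded measure, the best one can hope for is $\|\partial_t(u_x*\eta^w)\|_\infty\leq C\|u_x\|_{\TV}/w^2$, so (iv) requires $\alpha_\delta\geq C'\delta/w_\delta^2$. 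Combined with $\alpha_\delta\to 0$ this forces $w_\delta^2\gg \delta$, and the explicit choice $w_\delta:=\delta^{1/3}$, $\alpha_\delta:=C'\delta^{1/3}$ works. This scale calibration is the only genuinely non-routine step; everything else is essentially forced by the convexity structure.

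Finally, for the convergence of the functional (property~(vi)), joint convexity of the perspective $(j,u)\mapsto s(j\mid c u)$ and of $(\tilde\jmath,u)\mapsto \tilde\jmath^2/u$ yields
\[
\tilde\J^0(u^\delta,j^\delta)\leq (1-\alpha_\delta)\,\tilde\J^0(u^{\mathrm{mol},w_\delta},j^{\mathrm{mol},w_\delta})+\alpha_\delta\,\tilde\J^0(\mathbf{1},j^{\mathrm{eq}}),
\]
and the Jensen/data-processing inequality for convolution against the probability kernel $\eta^{w_\delta}$ gives $\tilde\J^0(u^{\mathrm{mol},w_\delta},j^{\mathrm{mol},w_\delta})\leq \tilde\J^0(u,j)$. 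The second summand vanishes, so $\limsup_{\delta\to 0}\tilde\J^0(u^\delta,j^\delta)\leq \tilde\J^0(u,j)$; the matching $\liminf$ is the lower semicontinuity from Proposition~\ref{prop:lower bound}. The damped term uses the measure-valued convention~\eqref{eq:measure-valued s}, to which the same perspective argument applies. For the initial entropy, uniform convergence $u^\delta_x(0)\to u_x(0)$ at $x\in\V_{0\slow}\cup\fC$ (thanks to the continuous boundary extension at $t=0$) combined with continuity of $s$ on its effective domain gives $\tilde\I^0_0(u^\delta(0))\to\tilde\I^0_0(u(0))$, completing the plan.
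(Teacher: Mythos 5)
Your plan is correct and takes a genuinely different route from the paper's. The paper regularises in three steps: (Step~1) extend $u_{\V_0}$ by constants and $u_{\V_1},j$ by zero, then convolve with the heat kernel; (Step~2) add the constant $\delta$ to every density to achieve property~(iii); (Step~3) add a $\delta$-scaled constant flux running along a fixed closed walk that passes through every damped edge, to achieve property~(iv). In contrast, you fold Steps~2 and~3 into a single convex combination against the limiting zero-cost stationary pair $(\mathbf{1},j^{\mathrm{eq}})$. This has two advantages: it automatically preserves the (affine) limit continuity equations without any case-by-case checking, and it dispenses with the construction of the spanning closed walk, which in the paper requires a separate graph-theoretic argument (and incidentally contains a small misprint --- the cycle obviously cannot be a subset of $\Rfcycle$). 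The price is that you must separately verify that $(\mathbf{1},j^{\mathrm{eq}})$ solves~\eqref{eq:limit cont eq} and has zero cost, which follows from passing the stationary-state equation for $\pi^\e$ to the limit, and you must couple the two parameters $w_\delta,\alpha_\delta$, whereas the paper's split treatment keeps them decoupled.

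Two small imprecisions are worth noting, though neither is fatal. First, the extension of $u_{\V_1}$ by ``$u(0)$'' is ill-defined since $u_{\V_1}\in\M([0,T];\RR^{\V_1})$; one should instead extend $u_{\V_1}$ by $\mathbf{1}$ (the choice is free because $u_{\V_1}$ does not enter the limit continuity equations~\eqref{eq:limit cont eq V1}). Second, your claim that the Jensen step gives exactly $\tilde\J^0(u^{\mathrm{mol},w},j^{\mathrm{mol},w})\leq\tilde\J^0(u,j)$ presumes the extension is strictly zero-cost; but extending $u_{\V_{0\slow}\cup\fC}$ by its boundary value $u(0)\neq\mathbf{1}$ and $j$ by $j^{\mathrm{eq}}$ yields cost density $s(j^{\mathrm{eq}}_r\mid \kappa_r\pi_{r_-}u_{r_-}(0))>0$ outside $[0,T]$. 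As in the paper's proof (which has the same feature), this only contributes a boundary term of order $w_\delta$ that vanishes as $\delta\to0$, so the conclusion $\limsup_{\delta\to0}\tilde\J^0(u^\delta,j^\delta)\leq\tilde\J^0(u,j)$ survives. With these two fixes the argument is complete, and the scale calibration $w_\delta=\delta^{1/3}$, $\alpha_\delta\sim\delta^{1/3}$ is exactly what is needed to satisfy property~(iv) against the $\|\dot u_x^\delta\|_\infty\lesssim w_\delta^{-2}$ bound.
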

\begin{proof}
We construct the approximation in three steps.

\paragraph{Step 1: convolution.}
Note that for each $x\in \V_0$ the concentration $t\mapsto u_x(t)$ is continuous; for $x\in \V_{0\slow}$ this follows from the definition of $\Theta$, and for $x\in \V_{0\fcycle}$ this follows from the continuity of $t\mapsto u_{\fC}(t)$ in $\Theta$ and the continuity equation~\eqref{eq:limit cont eq V0f}. We first extend $u_{\V_0}$ beyond $[0,T]$ by constants, and  $u_{\V_1}$ and $j$ by zero. Observe  that with this extension the pair $(u,j)$ satisfies the continuity equation~\eqref{eq:limit cont eq} in the sense of distributions on the whole time interval $\RR$ (which is a stronger statement than the usual interpretation~\eqref{eq:weak sense}). We then approximate $(u,j)$ by convoluting with the heat kernel: $(u^\delta,j^\delta):=(u*\theta^\delta,j*\theta^\delta)$, where $\theta^\delta(t):=(4\pi\delta)^{-1/2} e^{-t^2/(4\delta)}$. Since $(u,j)$ satisfies the linear continuity equations~\eqref{eq:limit cont eq} in the sense of distributions on~$\RR$, they are also satisfied for the convolution $(u^\delta,j^\delta)$.

It is easily checked that $(u^\delta,j^\delta)\big|_{[0,T]} \to (u,j)\big|_{[0,T]}$ in $\Theta$. The initial conditions $u_x^\delta(0)$ converge for $x\in \V_{0\slow}\cup \fC$ and so by continuity $\tilde\I_0^0(u^\delta(0))\to\tilde\I_0^0(u(0))$.  The bound $\liminf_{\delta\to0}\tilde\J^0(u^\delta,j^\delta)\geq \tilde\J^0(u,j)$ is for free because of  lower semicontinuity (see Section~\ref{subsec:dual formulations}). The bound in the other direction is obtained by exploiting the joint convexity of $(u,j)\mapsto\tilde\J^0(u,j)$ and applying Jensen's inequality to the probability measure $\theta^\delta$; see~\cite[Lem.~3.12]{Renger2018a}. 

\paragraph{Step 2: add constants to the densities.}
For the next step we further approximate the sequence $(u^\delta,j^\delta)$, but to reduce clutter we now assume that the procedure above is already applied so that we are given a smooth and bounded $(u,j)$. We make all densities positive by adding a constant $\delta>0$, i.e.\
\[
u_x^\delta (t) := u_x(t) + \delta \qquad \text{for }0\leq t\leq T, \ x\in \V.
\]
It follows automatically that  $u_\fc^\delta = u_\fc + \delta$.
We leave the fluxes $j$ invariant, and the resulting pair $(u^\delta,j^\delta)$ again satisfies the limiting continuity equations~\eqref{eq:limit cont eq}.
The following lemma shows that the limit functional $\tilde\I_0^0+\tilde\J^0$ converges along the sequence $(u^\delta,j^\delta)$.

\begin{lemma}
\label{l:approx-delta}
	Let $a,b\in \mathcal M_{\geq0}([0,T])$ satisfy 
	\[
	\int_{[0,T]} s(a|b)\,(dt) < \infty.
	\]
Then setting $b^\delta(dt)  := b(dt) + \delta dt $ we have
\[
\lim_{\delta\to0} \int_{[0,T]} s(a|b^\delta)\,(dt)  = \int_{[0,T]} s(a|b)\,(dt).
\]
\end{lemma}
\begin{proof}[Proof of Lemma~\ref{l:approx-delta}]
We write
\[
s(a|b^\delta)(dt) = a(dt) \log \frac{\mathrm d a}{\mathrm d b^\delta}(t) - a(dt) + b^\delta(dt).
\]
After integration over $[0,T$] the final term $b^\delta([0,T])$ converges to $b([0,T])$ as $\delta\to0$;  in the first term the argument of the logarithm is decreasing in $\delta$, and therefore the first term converges by the Monotone Convergence Theorem.
\end{proof}

\paragraph{Step 3: add constant fluxes.}
Again to reduce clutter we may assume that we are given an $(u,j)$ satisfying properties~\ref{it:smoothed uj}, \ref{it:conteq uj}, and \ref{it:mass bdd below} of the Lemma.
By irreducibility of the network there exists a cycle $(r^k)_{k=1}^K\subset\Rfcycle, r^k_+=r^{k+1}_-, r^1_-=x^1=r^K_+$, such that each damped flux $r\in\Rdamped$ is contained in the cycle at least once. Note that some fluxes $r$ may occur multiple times, namely $n(r):=\#\{k=1,\hdots,K: r^k=r\}$ times in the cycle. For each $k=1,\hdots,K$ we define the new approximation:
\begin{equation*}
\begin{cases}
  j^\delta_{r^k} := j_{r^k} + n(r^k)\sum_{y\in\V_1} \delta \tilde \pi_y \lVert \dot u_y \rVert_{L^\infty}, &r^k\in\Rdamped\cup\Rslow,\\
  \tjdel_{r^k}:=\tilde\jmath_{r^k} + \sqrt{\e} n(r^k) \sum_{y\in\V_1} \delta \tilde \pi_y \lVert \dot u_y \rVert_{L^\infty}, &r^k\in \Rfcycle.
\end{cases}
\end{equation*}
Substituting these modified fluxes into the limit continuity equations~\eqref{eq:limit cont eq} shows that the concentrations are left unchanged, since some extra mass is being pushed around in cycles. Since the fluxes are only changed by adding a constant, it is easily checked that $(u,j^\delta) \to (u,j)$ in $\Theta$, and by Lemma~\ref{l:approx-delta} we find $\tilde\J^0(u,j^\delta)\to \tilde\J^0(u,j)$ as $\delta\to 0$.
\end{proof}

We now construct a recovery sequence $(u^\epsilon,j^\epsilon)$ for a $(u,j)\in\Theta$ that is regularised by Lemma~\ref{lem:regularise uj}. The difficulty is to construct the sequence such that the continuity equations hold in the $\V_1$ and $\V_{0\fcycle}$ nodes. The problem with the $\V_1$ nodes is that the continuity equations~\eqref{eq:cont eq V1} and \eqref{eq:limit cont eq V1} are different, but $u^\epsilon_{\V_1}$ needs to converge to $u_{\V_1}$. This will be done by transporting exactly the right amount of mass from certain $\V_0$-nodes to the $\V_1$-nodes. To satisfy the continuity equations in the $\V_{0\fcycle}$ nodes, we define $u^\epsilon_{V_{0\fcycle}}$ through the continuity equations, and use the strengthened convergence result of Section~\ref{subsec:strengthened equil} to pass to the limit. 

\begin{lemma}[Recovery sequence for regularised paths] Let $(u,j)\in\Theta$ satisfy properties \ref{it:smoothed uj}, \ref{it:conteq uj}, \ref{it:mass bdd below} and \ref{it:added uj} of Lemma~\ref{lem:regularise uj}. Then there exists a sequence $(u^\epsilon,j^\epsilon)\in\Theta$ such that:
\begin{enumerate}
\item $(u^\epsilon,j^\epsilon)$ satisfies the $\epsilon$-dependent continuity equations~\eqref{eq:cont eq};
\item $(u^\epsilon_{\V_{0\slow}}, u^\epsilon_{\R_\fcycle}, u^\epsilon_\fC, u^\epsilon_{\V_1}, j^\epsilon_\Rslow, j^\epsilon_\Rdamped, \tjeps_\Rfcycle) \to (u_{\V_{0\slow}}, u_{\R_\fcycle}, u_\fC, u_{\V_1}, j_\Rslow, j_\Rdamped, \tilde\jmath_\Rfcycle)$ uniformly on $[0,T]$;
\item $\tilde\I_0^\epsilon\big(u^\epsilon(0)\big)\to \tilde\I_0^0\big(u(0)\big)$ and $\tilde\J^\epsilon(u^\epsilon,j^\epsilon) \to   \tilde\J^0(u,j)$.
\end{enumerate}
\label{lem:rec seq regularised}
\end{lemma}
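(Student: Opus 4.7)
The candidate $(u^\epsilon,j^\epsilon):=(u,j)$ fails the $\epsilon$-continuity equations~\eqref{eq:cont eq} in two places: on $\V_1$, where the limit reads $0 = \text{RHS}$ but \eqref{eq:cont eq V1} requires $\pi^\epsilon_x\dot u_x = \text{RHS}$, and on $\V_{0\fcycle}$, where the $1/\epsilon$ fast-cycle terms in~\eqref{eq:cont eq V0f} must balance nontrivially. My plan is to retain $(u,j)$ as the leading order, add $O(\epsilon)$ corrections to the slow and damped fluxes to absorb the first defect, and redefine $u^\epsilon_{\V_{0\fcycle}}$ as the solution of the $\epsilon$-ODE to absorb the second. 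No correction to $\tjeps_\Rfcycle$ is needed, since $\div\tilde\jmath_\Rfcycle=0$ by~\eqref{eq:limit cont eq V0f}.

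Concretely, set $u^\epsilon_x:=u_x$ for $x\in\V_{0\slow}\cup\V_1$, $u^\epsilon_\fc:=u_\fc$, and $\tjeps_\Rfcycle:=\tilde\jmath_\Rfcycle$. Define $u^\epsilon_x$ for $x\in\V_{0\fcycle}$ as the solution of the ODE~\eqref{eq:cont eq V0f} with initial condition $u^\epsilon_x(0):=u_\fc(0)$, which equals $u_x(0)$ after the regularisation of Lemma~\ref{lem:regularise uj} (since $u_x=u_\fc$ a.e.\ and both sides are smooth). Set $j^\epsilon_r:=j_r+\epsilon g_r$ for $r\in\Rslow\cup\Rdamped$, where $g$ is a smooth, time-dependent flow on the slow/damped subnetwork chosen so that $(\div g)_x = -\tilde\pi_x\dot u_x$ for $x\in\V_1$ and $(\div g)_x = 0$ for $x\in\V_0$; such $g$ exists by irreducibility and by the mass-balance identity extracted from~\eqref{eq:limit cont eq}. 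Property~4 of Lemma~\ref{lem:regularise uj} provides a uniform positive buffer in the $j_r$, so $j^\epsilon_r\geq 0$ for $\epsilon$ small.

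Convergence of the trajectories is immediate for $u^\epsilon_{\V_{0\slow}\cup\fC\cup\V_1}$ (constant in $\epsilon$) and for the fluxes (since $\epsilon g\to 0$ uniformly). For $u^\epsilon_{\V_{0\fcycle}}$, I invoke the second part of Lemma~\ref{lem:strengthened equilibration}: the fluxes are smooth and hence $L^1$-bounded, $u^\epsilon_\fC\equiv u_\fC$, $\div j^\epsilon$ is smooth (hence $L^\infty$-bounded), $\div\tjeps_\Rfcycle/\sqrt{\epsilon}\equiv 0$ by~\eqref{eq:limit cont eq V0f}, and the initial conditions agree by construction. This yields $u^\epsilon_x\to u_\fc$ uniformly on all of $[0,T]$ for every $x\in\fc\in\fC$.

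For the functionals, $\tilde\I^\epsilon_0$ converges: its $\V_1$-contribution is $\pi^\epsilon_x s(u_x(0)\mid 1)=O(\epsilon)$, the $\V_{0\slow}$-contribution converges by continuity of $s$ and $\pi^\epsilon\to\pi$, and the $\V_{0\fcycle}$-contribution converges with equality in the Jensen step of Lemma~\ref{lem:Gamma lower bound initial} because $u^\epsilon_x(0)=u^\epsilon_\fc(0)$. The slow and damped pieces of $\tilde\J^\epsilon$ converge by dominated convergence, exploiting property~3 to bound densities away from zero. The main obstacle is the fast-cycle term $\tilde\J^\epsilon_\fcycle$, whose two arguments of $s$ both scale as $1/\epsilon$ while their difference is of order $1/\sqrt{\epsilon}$. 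This is handled via the Taylor expansion $s(a+b\mid a)=b^2/(2a)+O(b^3/a^2)$ with $a=\kappa_r\pi^\epsilon_{r_-}u^\epsilon_{r_-}/\epsilon$ and $b=\tilde\jmath_r/\sqrt{\epsilon}$: the leading term $\tilde\jmath_r(t)^2/(2\kappa_r\pi^\epsilon_{r_-}u^\epsilon_{r_-}(t))$ converges uniformly to the integrand of $\tilde\J^0_\fcycle$, and the error is $O(\sqrt{\epsilon})$ uniformly in $t$ since $\tilde\jmath_r$ is bounded and $u^\epsilon_{r_-}$ is bounded below.
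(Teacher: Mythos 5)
Your construction of the correction flow $g$ is overdetermined. On a finite graph the discrete divergence satisfies $\sum_{x\in\V}(\div g)_x = 0$ identically; your constraints $(\div g)_x = 0$ for $x\in\V_0$ and $(\div g)_x = -\tilde\pi_x \dot u_x$ for $x\in\V_1$ then force $\sum_{x\in\V_1}\tilde\pi_x\dot u_x \equiv 0$, which is not a consequence of \eqref{eq:limit cont eq}: the limit continuity equations say nothing about $u_{\V_1}$, which is a free variable decoupled from the rest of the system, so its weighted time-derivative need not vanish. There is no ``mass-balance identity'' to invoke here. To make the construction consistent, mass must be drained from at least one $\V_0$-node, which is precisely what the paper does by picking a distinguished $\hat x\in\V_{0\slow}$ and transporting $\pi^\epsilon_y\dot u_y$ along chains $Q(\hat x,y)$. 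A related gap: you restrict $g$ to the slow/damped subnetwork. That subnetwork is not guaranteed to connect a $\V_0$-node to every $\V_1$-node, so even after fixing the divergence constraint such a $g$ may fail to exist; the paper's chains are allowed to traverse $\Rfcycle$, adding an $O(\sqrt\epsilon)$ term to $\tjeps_\Rfcycle$ in the process (so ``no correction to $\tjeps_\Rfcycle$'' is in general also too restrictive).

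Finally, setting $u^\e_x := u_x$ for $x\in\V_{0\slow}$ conflicts with the $\e$-continuity equation: with your $j^\e = j+\e g$ and $(\div g)_x = 0$ on $\V_0$, equation~\eqref{eq:cont eq V0s} gives $\pi^\e_x \dot u^\e_x = -(\div j)_x = \pi_x \dot u_x$, so one cannot have $u^\e_x = u_x$ unless $\pi^\e_x = \pi_x$. The paper handles this by \emph{defining} $u^\e_x$ through the continuity equation with initial condition $u^\e_x(0) = \frac{\pi^\e_x}{\pi_x}u_x(0)$, so that $\pi^\e_x u^\e_x(t) = \pi_x u_x(t) - \mathds1_{\{x=\hat x\}}\sum_{y\in\V_1}\pi^\e_y u_y(t)$, which converges to $u_x$ only in the $\e\to0$ limit. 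Your treatment of $u^\e_{\V_{0\fcycle}}$ via Lemma~\ref{lem:strengthened equilibration} and the expansion of $\tilde\J^\e_\fcycle$ are sound in outline, but they are predicated on a candidate $(u^\e,j^\e)$ that does not actually satisfy \eqref{eq:cont eq}, so the argument as written does not close.
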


\begin{proof}
For ease of notation we pick only one node $\hat x\in\V_{0\slow}$, whose density is bounded from below by assumption. We will approximate all fluxes such that a little mass is transported from node $\hat x$ to all $\V_1$-nodes, as follows. Since the network is irreducible, there exists, for each $y\in\V_1$, a connecting chain $Q(\hat x,y):=(r^{k,y})_{k=1}^{K_y}\subset\R$, $r^{k,y}_+=r^{k+1,y}_-, r^{1,y}_-=\hat y, $ and $r^{K_y}=y$. For these connecting chains we may assume without loss of generality that no $r\in\R$ occurs multiple times in a chain $Q(\hat x,y)$. Define for all $r\in\R$:
\begin{equation*}
\begin{cases}
   j^\e_r := j_r + \sum_{y\in\V_1:r\in Q(\hat x,y)}\,  \pi_y^\e \dot u_y, &r\in\Rslow\cup\Rdamped,\\
  \tjeps_r := \tilde\jmath_r + \tfrac1{\sqrt{\e}}\sum_{y\in\V_1:r\in Q(\hat x,y)}\,  \pi_y^\e \dot u_y, &r\in\Rfcycle.
\end{cases}
\end{equation*}
Note that by the assumed properties~\ref{it:smoothed uj} and \ref{it:mass bdd below} of Lemma~\ref{lem:regularise uj} together with $\pi^\epsilon_{\V_1}\to0$, all approximated fluxes $j^\e_r,\tjeps_r$ are non-negative for $\epsilon$ small enough. Clearly all fluxes $j^\epsilon$ converge uniformly to $j$, since $\pi^\e_{\V_1}/\sqrt{\e}\to0$. For the initial conditions, set
\begin{align}
  u^\e_x(0)&:=\tfrac{\pi^\epsilon_x}{\pi_x}u_x(0),                          &&\text{ for all } x\in\V_{0\slow},\notag\\
  u^\e_\fc(0)= u^\e_x(0)&:= u_x(0)=u_\fc(0), &&\text{ for all } x\in\fc\in\fC, \label{eq:rec seq ic c}\\ 
  u^\e_x(0)&:=u_x(0),                             &&\text{ for all } x\in\V_1, \notag
\end{align}
and define the paths $u^\e$ by the continuity equations~\eqref{eq:cont eq}.

More precisely, by construction for $x\in V_{0\slow}$:
\begin{align*}
  \pi^\epsilon_x u^\epsilon_x(t) 
      &\stackrel{\eqref{eq:cont eq V0s}}{:=}
    \pi_x u_x(0)
      +
    \sum_{\substack{r\in\Rslow:\\r_+=x}} j^\epsilon_r[0,t]
      + 
    \sum_{\substack{r\in\Rdamped:\\r_+=x}} j^\epsilon_r[0,t]
      -
    \sum_{\substack{r\in\Rslow:\\r_-=x}} j^\epsilon_r[0,t]\\
  &=
  \pi_x u_x(t) - \mathds1_{\{x=\hat x\}} \sum_{y\in \V_1} \pi^\epsilon_y u_y(t),
\end{align*}
which is bounded away from zero (for $\e$ small enough) by the assumed properties \ref{it:smoothed uj} and \ref{it:mass bdd below} of Lemma~\ref{lem:regularise uj} together with $\pi^\epsilon_{\V_1}\to0$. Clearly $u^\e_x\to u_x$ uniformly.

For $x\in\V_1$, the densities will be constant in $\epsilon$, since:
\begin{align*}
  \pi^\epsilon_x u^\epsilon_x(t) 
    &\stackrel{\eqref{eq:cont eq V1}}{:=} \pi^\epsilon_x u_x(0)
      +
    \sum_{\substack{r\in\Rslow:\\r_+=x}} j^\epsilon_r[0,t]
      + 
    \sum_{\substack{r\in\Rdamped:\\r_+=x}} j^\epsilon_r[0,t] -
    \sum_{\substack{r\in\Rdamped:\\r_-=x}} j^\epsilon_r[0,t]
      =
    \pi^\epsilon_x u_x(t).
\end{align*}
For $x\in\fc\in\fC$, the density $u^\e_x(t)$ is defined as the solution of the coupled equations:
\begin{align}
  \pi^\epsilon_x\dot u^\epsilon_x
    &\stackrel{\eqref{eq:cont eq V0f}}{:=} 
  \sum_{\substack{r\in\Rslow:\\r_+=x}} j^\epsilon_r
    + 
  \sum_{\substack{r\in\Rdamped:\\r_+=x}} j^\epsilon_r
    -
  \sum_{\substack{r\in\Rslow:\\r_-=x}} j^\epsilon_r\notag\\
     &\hspace{7em}+
  \sum_{\substack{r\in\Rfcycle:\\r_+=x}} \Big( \tfrac{1}{\epsilon}\kappa_r \pi^\epsilon_{r_-} u^\epsilon_{r_-} + \tfrac{1}{\sqrt{\epsilon}}\tjeps_r \Big) 
     -
  \sum_{\substack{r\in\Rfcycle:\\r_-=x}} \Big( \tfrac{1}{\epsilon}\kappa_r \pi^\epsilon_{r_-} u^\epsilon_{r_-} + \tfrac{1}{\sqrt{\epsilon}}\tjeps_r \Big)
\label{eq:rec seq uC}
\end{align}
with initial condition~\eqref{eq:rec seq ic c}. Summing over $x\in\fc$ yields:
\begin{align*}
  \pi^\epsilon_\fc \dot u^\epsilon_\fc &\stackrel{\eqref{eq:cont eq sum density eps}}{:=} 
\sum_{x\in\fc} \pi^\epsilon_x\dot u^\epsilon_x
    = 
  \sum_{\substack{r\in\Rslow:\\r_+\in\fc  }} j^\epsilon_r
    + 
  \sum_{\substack{r\in\Rdamped:\\r_+\in\fc}} j^\epsilon_r
    -
  \sum_{\substack{r\in\Rslow:\\r_-\in\fc}} j^\epsilon_r\notag\\
     &\hspace{3em}+
  \sum_{\substack{r\in\Rfcycle:\\r_+\in\fc}} \Big( \tfrac{1}{\epsilon}\kappa_r \pi^\epsilon_{r_-} u^\epsilon_{r_-} + \tfrac{1}{\sqrt{\epsilon}}\tjeps_r \Big) 
     -
  \sum_{\substack{r\in\Rfcycle:\\r_-\in\fc}} \Big( \tfrac{1}{\epsilon}\kappa_r \pi^\epsilon_{r_-} u^\epsilon_{r_-} + \tfrac{1}{\sqrt{\epsilon}}\tjeps_r \Big)\\
    &\hspace{-1.3em}\stackrel{\eqref{eq:limit cont eq C},\eqref{eq:limit cont eq V0f}}{=}\pi_\fc\dot u_\fc.
\end{align*}
Together with the initial condition~\eqref{eq:rec seq ic c} this shows that $u_\fc^\epsilon \to u_\fc$ uniformly. Since all fluxes are uniformly bounded (and actually $\div\tjeps\equiv0$) and $u^\e_x(0)=u^\e_\fc(0)$ for $x\in\fc\in\fC$ we can apply Lemma~\ref{lem:strengthened equilibration} to \eqref{eq:rec seq uC} to derive that $u^\epsilon_x\to u_\fc$ uniformly on $[0,T]$ for all $x\in\fc$. Thus indeed all variables $(u^\epsilon_{\V_{0\slow}}, u^\epsilon_{\R_\fcycle}, u^\epsilon_\fC, u^\epsilon_{\V_1}, j^\epsilon_\Rslow, j^\epsilon_\Rdamped, \tjeps_\Rfcycle) \to (u_{\V_{0\slow}}, u_{\R_\fcycle}, u_\fC, u_{\V_1}, j_\Rslow, j_\Rdamped, \tilde\jmath_\Rfcycle)$ uniformly, which was to be shown. 

To show convergence of $\tilde\I^\epsilon(u^\epsilon(0))$,
\begin{align*}
  \tilde\I^\epsilon_0\big(u^\epsilon(0)\big) &= \sum_{x\in\V_{0\slow}} \pi^\epsilon_x s\big(\tfrac{\pi^\epsilon_x}{\pi_x} u_x(0)\mid 1 \big) + \sum_{\fc\in\fC}\sum_{x\in\fc} \pi^\epsilon_x s\big(u_\fc(0)\mid 1\big) + \sum_{x\in\V_1} \pi^\epsilon_x s\big(u_x(0)\mid 1\big)\\
  &\to \sum_{x\in\V_{0\slow}} \pi_x s\big( u_x(0)\mid 1 \big) + \sum_{\fc\in\fC} \pi^\fc s\big(u_\fc(0)\mid 1\big)
  = \tilde\I^0_0\big(u(0)\big).
\end{align*}
To show convergence of $\tilde\J^\epsilon(u^\epsilon,j^\epsilon)$, we use the fact that all fluxes and densities are uniformly bounded, that is for $\epsilon$ sufficiently small and all $t\in[0,T]$,
\begin{align*}
  0\leq &j^\epsilon_r(t) \leq 2\lVert j_r\rVert_{L^\infty}<\infty,                             &&r\in\Rslow\cup\Rdamped,\\
  0\leq &\tjeps_r(t) \leq 2\lVert \tilde j_r\rVert_{L^\infty}<\infty,                          &&r\in\Rfcycle,\\
  0<\tfrac12\inf_{t\in[0,T]} u_x(t) \leq &u^\epsilon_x(t) \leq 2\lVert u_x\rVert_{L^\infty},   &&x\in\V_{0\slow}\cup\V_{0\fcycle}\cup\fC\cup\V_1.
\end{align*}
The convergence of the integrals for $r\in\Rslow$ and $r\in\Rdamped$ then follows by dominated convergence:
\begin{align*}
 \int_{[0,T]}\!s\big(j^\epsilon_r(t)\mid \kappa_r\pi^\epsilon_{r_-} u^\epsilon_{r_-}(t)\big)\,dt \to \int_{[0,T]}\!s\big(j_r(t)\mid \kappa_r\pi_{r_-} u_{r_-}(t)\big)\,dt, &&r\in\Rslow,\\
 \int_{[0,T]}\!s\big(j^\epsilon_r(t)\mid \tfrac1\epsilon\kappa_r\pi^\epsilon_{r_-} u^\epsilon_{r_-}(t)\big)\,dt \to \int_{[0,T]}\!s\big(j_r(t)\mid \kappa_r\tilde\pi_{r_-} u_{r_-}(t)\big)\,dt, &&r\in\Rdamped.
\end{align*}
Similarly for $r\in\Rfcycle$, by dominated convergence,
\begin{align*}
  \int_{[0,T]}\!s\Big(\tfrac{1}{\epsilon}\kappa_r \pi^\epsilon_{r_-}\!u^\epsilon_{r_-}\!(t) + \mfrac{1}{\sqrt{\epsilon}}\tjeps_r(t)  \Bigm| \tfrac1\epsilon\kappa_r\pi^\epsilon_{r_-}\!u^\epsilon_{r_-}(t)\Big)\,dt
    \stackrel{\eqref{eq:s bdd quadr}}{\leq}
  \int_{[0,T]}\! \frac{\tjeps_r(t)^2}{\kappa_r\pi^\epsilon_{r_-}\!u^\epsilon_{r_-}(t)}\,dt
    \to
  \int_{[0,T]}\! \frac{\tilde\jmath_r(t)^2}{\kappa_r\pi^\epsilon_{r_-}\!u_{r_-}(t)}\,dt.
\end{align*}
The inequality in the other direction follows from Lemma~\ref{lem:Gamma lower bound fcycle}.
\end{proof}

\section{Spikes and damped cycles}
\label{sec:spikes}

As explained in Section~\ref{subsec:spikes}, the uniform $L^1$-bounds on the damped fluxes $j^\epsilon_{\Rdamped}$ and small concentrations $u^\epsilon_{\V_1}$ can not prevent limits from becoming measure-valued in time, that is, both may develop atomic or Cantor parts. The question when these spikes in damped fluxes may occur is answered in our Theorem~\ref{th:spikes}; this section is devoted to the proof of both statements in that theorem. The first part of Theorem~\ref{th:spikes} rules out spikes for damped fluxes that are not chained in a cycle. The second part shows that spikes may occur in damped flux cycles.

Recall the subdivision $\Rdamped=\Rdcyc\cup\Rdnocyc$ from Section~\ref{subsec:spikes}

\subsection{No spikes in damped fluxes outside cycles}

\begin{proof}[{Proof of Theorem~\ref{th:spikes}(i)}]
For this argument we again work with the fluxes in integrated form $j^\epsilon[0,t]$. Since $j^\epsilon[0,0]=0$,
\begin{align}
  j^\epsilon_\Rslow[0,\cdot] &\to j_\Rslow[0,\cdot] \text{ in } L^1([0,T];\RR^\Rslow) 
    \qquad\text{ and} \notag\\
  j^\epsilon_\Rdamped[0,\cdot] &\to j_\Rdamped[0,\cdot] \text{ in } L^1([0,T];\RR^\Rdamped).
\label{eq:damped cycle L1 conv}
\end{align}
Take an arbitrary $r^0\in\Rdnocyc$ coming out of node $r^0_-=:x\in\V_1$. By Lemma~\ref{lem:node categorisation}, all fluxes flowing out of node $x$ are damped, and all fluxes flowing into node $x$ are either slow or damped. The mild formulation of the continuity equation in $x$ now reads:
\begin{equation*}
  \sum_{\substack{r^1\in\Rdamped:\\ r^1_-=x}} j^\epsilon_{r^1}[0,t] - \sum_{\substack{r^1\in\Rslow:\\r^1_+=x}} j^\epsilon_{r^1}[0,t]
  - \sum_{\substack{r^1\in\Rdamped:\\r^1_+=x}}j^\epsilon_{r^1}[0,t]
  = -\pi^\epsilon_x u^\epsilon_x(t) + \pi^\epsilon_x u^\epsilon_x(0).
\end{equation*}
By the uniform $L^1$-bounds on $u^\epsilon$ and the vanishing $\pi^\epsilon_x u^\epsilon_x(0)$, the right-hand side above converges to zero in $L^1(0,T)$, and so by \eqref{eq:damped cycle L1 conv}:
\begin{equation*}
  \sum_{\substack{r^1\in\Rdamped:\\ r^1_-=x}} j_{r^1}[0,t] - \sum_{\substack{r^1\in\Rslow:\\r^1_+=x}} j_{r^1}[0,t]
  - \sum_{\substack{r^1\in\Rdamped:\\r^1_+=x}} j_{r^1}[0,t]
  = 0.
\end{equation*}
Therefore, by the uniqueness of derivatives of functions of bounded variation (for arbitrary sets $dt$),
\begin{equation*}
  0\leq j_{r^0}(dt)\leq\sum_{\substack{r^1\in\Rdamped:\\ r^1_-=r^0_-(=x)}} j_{r^1}(dt) = \sum_{\substack{r^1\in\Rslow:\\r^1_+=r^0_-}} j_{r^1}(t)\,dt + \sum_{\substack{r^1\in\Rdamped:\\r^1_+=r^0_-}}j_{r^1}(dt).
\end{equation*}
Applying the same inequality for each damped flux $r^1_+=r^0_-$, we get:
\begin{equation*}
  0\leq j_{r^0}(dt)\leq \sum_{\substack{r^1\in\Rslow:\\r^1_+=r^0_-}} j_{r^1}(t)\,dt + \sum_{\substack{r^1\in\Rdamped:\\r^1_+=r^0_-}}
    \Big\lbrack
      \sum_{\substack{r^2\in\Rslow:\\r^2_+=r^1_-}} j_{r^2}(t)\,dt + \sum_{\substack{r^2\in\Rdamped:\\r^2_+=r^1_-}}j_{r^2}(dt)
    \Big\rbrack.
\end{equation*}
We now apply this procedure recursively until the right-hand side contains slow fluxes only. This is possible because by assumption any damped flux that already appeared during this procedure can not reappear in the inequality. Exploiting that eventually the right-hand side is a sum over slow fluxes that are in $L^\C(0,T)$, by the Radon-Nikodym Lemma the left-hand $j_{r^0}$ also has a $L^\C$-density.
\end{proof}

\subsection{Finite-cost spikes in damped flux cycles}

We now prove that fluxes in $\Rdcyc$ may actually develop singularities.

\begin{proof}[{Proof of Theorem~\ref{th:spikes}(ii)}]
If $\Rdcyc\neq\emptyset$ then there exists a diconnected damped component $\fd\subset\V_1$ such that $\forall x,y \in\fd \quad \exists (r^k)_{k=1}^K\subset\Rdcyc, r^1_-=x, r^k_+=r^{k+1}_-,r^K_+=y$ (cf. Section~\ref{subsec:connected components}). By irreducibility and mass conservation there exists at least one $r^\mathrm{in}\in\Rslow\cup\Rdnocyc$ with $r^\mathrm{in}_+\in\fd$ and at least one $r^\mathrm{out}\in\Rdnocyc$ with $r^\mathrm{out}_-\in\fd$. We first assume 1) that all edges in $\fd$ are chained in a cycle, i.e. $\fd:=(x^k)_{k=1}^K$, $\Rdcyc\cap\{r_-\in\fd\}=(r^k)_{k=1}^K$ with $r^k_-=x^k, r^k_+=x^{k+1}, r^K_+=x^1$, 2) that  $r^\mathrm{in}_+=x^1$ and $r^\mathrm{out}_-=x^l$, and 3) that $x^0:=r^\mathrm{in}_-$ and $x^{K+1}:=r^\mathrm{out}_+$ both lie in $\V_0$, see Figure~\ref{fig:damped cycle}.

\begin{figure}[h!]
  \centering
  \begin{tikzpicture}[scale=0.6,baseline]
    \tikzstyle{every node}=[font=\scriptsize]
    \node[label=left:$x^1$](x1) at (0,0) [circle,draw,fill=gray]{};
    \node                (x2) at (2,1) [circle,draw,fill=gray]{}; 
    \node[label=right:$x^l$](x3) at (4,0) [circle,draw,fill=gray]{};
    \node                (x4) at (2,-1) [circle,draw,fill=gray]{};
    \node[label=left:$x^0$](x5) at (-1,-3) [circle,draw,fill=black]{};
    \node[label=right:$x^{K+1}$](x6) at (5,-3) [circle,draw,fill=black]{};

    \draw[-{Latex},double distance=1.5,dashed] (x1) to node[midway,sloped,anchor=south]{$r^1$} (x2);
      \begin{scope}
        \path[clip] (x2) -- (x1) -- (x1.center) -- (x2.center)-- (x2);
        \draw[-{Latex},double distance=1.5] (x1) to (x2);
      \end{scope}
      \draw[{-{Latex}[black]},white,line width=1pt] (x1) to (x2);
    \draw[-{Latex},double distance=1.5,dashed] (x2) to (x3);
      \begin{scope}
        \path[clip] (x3) -- (x2) -- (x2.center) -- (x3.center)-- (x3);
        \draw[-{Latex},double distance=1.5] (x2) to (x3);
      \end{scope}
      \draw[{-{Latex}[black]},white,line width=1pt] (x2) to (x3);
    \draw[-{Latex},double distance=1.5,dashed] (x3) to (x4);
      \begin{scope}
        \path[clip] (x4) -- (x3) -- (x3.center) -- (x4.center)-- (x4);
        \draw[-{Latex},double distance=1.5] (x3) to (x4);
      \end{scope}
      \draw[{-{Latex}[black]},white,line width=1pt] (x3) to (x4);
    \draw[-{Latex},double distance=1.5,dashed] (x4) to node[midway,sloped,anchor=north]{$r^K$}(x1);
      \begin{scope}
        \path[clip] (x1) -- (x4) -- (x4.center) -- (x1.center)-- (x1);
        \draw[-{Latex},double distance=1.5] (x4) to (x1);
      \end{scope}
      \draw[{-{Latex}[black]},white,line width=1pt] (x4) to (x1);

    \draw[-{Latex},thick] (x5) --node[midway,sloped,anchor=south]{$r^\mathrm{in}$} (x1);
    \draw[-{Latex},double distance=1.5,dashed] (x3) to node[midway,sloped,anchor=south]{$r^\mathrm{out}$} (x6);
      \begin{scope}
        \path[clip] (x6) -- (x3) -- (x3.center) -- (x6.center)-- (x6);
        \draw[-{Latex},double distance=1.5] (x3) to (x6);
      \end{scope}
      \draw[{-{Latex}[black]},white,line width=1pt] (x3) to (x6);
    \draw[-{Latex},thick,dashed] (x6) to [bend left] (x5);
  \end{tikzpicture}
\caption{A diconnected component $\fd$ of damped fluxes, chained in a cycle.}
\label{fig:damped cycle}
\end{figure}
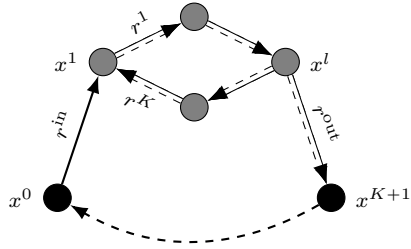

Initially we concentrate all mass in $x^0$, i.e. $u^\epsilon_{x^0}(0):=1/\pi^\epsilon_{x^0}$, and $u^\epsilon_x(0)=0$ for all other nodes $x\in\V$. The rate functional of the initial condition is indeed uniformly bounded:
\begin{equation*}
  \tilde\I_0^\epsilon\big(u^\epsilon(0)\big) = s(1\mid \underbrace{ \pi^\epsilon_{x^0}}_{\bigoh(1)}) + \underbrace{\textstyle\sum_{x\neq x^0} \pi^\epsilon_x}_{\leq 1}.
\end{equation*}

Define:
\begin{equation*}
  \Delta^\epsilon_T(t):=
    \begin{cases}
      0,                                    &0\leq t\leq \tfrac12T-\tfrac12\sqrt{\epsilon},\\
      t-\tfrac12T+\tfrac12\sqrt\epsilon,    &\tfrac12T-\tfrac12\sqrt{\epsilon} < t \leq \tfrac12T,\\
      \tfrac12T+\tfrac12\sqrt\epsilon -t,   &\tfrac12T < t \leq \tfrac12T + \tfrac12\sqrt{\epsilon},\\
      0,                                    &\tfrac12T + \tfrac12\sqrt{\epsilon} < t \leq T.
    \end{cases}
\end{equation*}
For the dynamics, we will first transport a little bit of mass from $x^0$ into each node of the cycle $\fd$, then develop a spike at $t=T/2$, and then release all mass from the cycle through $r^\mathrm{out}$. 
\begin{align*}
  j^\epsilon_{r^\mathrm{in}}(t)&:=K \mathds1_{(T/2-\sqrt{\epsilon}/2,T/2)}(t),\\
  j^\epsilon_{r^k}(t)&:=a_k\mathds1_{(T/2-\sqrt{\epsilon}/2,T/2)}(t) +  \tfrac1\epsilon \Delta^\epsilon_T(t) +  b_k\mathds1_{(T/2,T/2+\sqrt{\epsilon}/2)}(t),\\
  j^\epsilon_{r^\mathrm{out}}(t)&:=K \mathds1_{(T/2,T/2+\sqrt{\epsilon}/2)}(t),
\end{align*}
where $a_k:=K-k$ and $b_k:=k-l+K\mathds1_{\{k<l\}}$. We set all other fluxes $j^\epsilon_r,\tjeps_r$ in the network to $0$. By construction, $j^\epsilon_r\narrowto\tfrac14\delta_{T/2}$, which is singular as was to be shown.

We now show that the functional $\tilde\J^\epsilon$ is uniformly bounded. To calculate the densities, note that the $\tfrac1\epsilon\Delta_T^\epsilon$ terms in $j^\epsilon_{r^k}$ are divergence free. The mild formulation of the continuity equation~\eqref{eq:cont eq V1} thus yields for all $k=1,\hdots,K$ and $t\in[0,T]$,
\begin{align*}
  &\pi^\epsilon_{x^0} u^\epsilon_{x^0}(t) - 1 = 
    \begin{cases}
      0,                                    &0\leq t < T/2-\sqrt\epsilon/2,\\
      -K(t-\tfrac12T+\tfrac12\sqrt\epsilon),    &T/2-\sqrt{\epsilon}/2 < t \leq T/2,\\
      -\tfrac12K\sqrt\epsilon,                &t \geq T/2,
    \end{cases}\\
  &\pi^\epsilon_{x^k} u^\epsilon_{x^k}(t) = \mathds1_{\{k=1\}} j^\epsilon_{r^\mathrm{in}}[0,t] + j^\epsilon_{r^{k-1}}[0,t] - j^\epsilon_{r^k}[0,t] - \mathds1_{\{k=l\}}j^\epsilon_{r^\mathrm{out}}[0,t]
    = \Delta_T^\epsilon(t), \\
  &\pi^\epsilon_{x^{K+1}} u^\epsilon_{x^{K+1}}(t) =
    \begin{cases}
      0,                                    &0\leq t < T/2-\sqrt\epsilon/2,\\
      K(t-\tfrac12T+\tfrac12\sqrt\epsilon),    &T/2-\sqrt{\epsilon}/2 < t \leq T/2,\\
      \tfrac12K\sqrt\epsilon,                &t \geq T/2.
    \end{cases}
\end{align*}
The dynamic part of the rate functional is:
\begin{align*}
  \tilde\J^\epsilon\big(u^\epsilon_{\V_0},u^\epsilon_{\V_1},j^\epsilon_{\Rslow},j^\epsilon_{\Rdamped},\tjeps_{\Rfcycle}\big)
    =&
     \underbrace{\int_{[0,T]}\!s\big(j^\epsilon_{r^\mathrm{in}}(t) \mid \kappa_{r^\mathrm{in}} \pi^\epsilon_{x^0} u^\epsilon_{x^0}(t)\big)\,dt}_{(I)}\\
     &+ \sum_{k=1}^K \underbrace{\int_{[0,T]}\!s\big(j^\epsilon_{r^k}(t) \mid \tfrac1\epsilon\kappa_{r^k}\pi^\epsilon_{x^k}u^\epsilon_{x^k}(t)\big)\,dt}_{(II)}\\
     &+ \underbrace{\int_{[0,T]}\!s\big(j^\epsilon_{r^\mathrm{out}}(t) \mid \tfrac1\epsilon\kappa_{r^\mathrm{out}} \pi^\epsilon_{x^l} u^\epsilon_{x^l}(t)\big)\,dt}_{(III)}\\
     &+ \sum_{r^\mathrm{in}\neq r\in\Rslow:r_-= x^0} \underbrace{\int_{[0,T]}\!s\big(0 \mid \kappa_{r_-}\pi^\epsilon_{x^0}u^\epsilon_{x^0}(t)\big)\,dt}_{(IV)}\\
     &+ \sum_{r^\mathrm{out}\neq r\in\Rslow:r_-= x^{K+1}} \underbrace{\int_{[0,T]}\!s\big(0 \mid \kappa_{r_-}\pi^\epsilon_{x^{K+1}}u^\epsilon_{x^{K+1}}(t)\big)\,dt}_{(V)}.
\end{align*}
By a long but simple calculation, these integrals can be calculated explicitly:
\begin{align*}
  (I) &= \kappa_{r^\mathrm{in}}\big( T-\sqrt{\epsilon}(\tfrac12+\tfrac14 KT)\big) +\tfrac12\sqrt{\epsilon}(K\log\mfrac{K}{\kappa_{r^\mathrm{in}}}-K)+s(1-\tfrac12K\sqrt{\epsilon}\mid1)+\tfrac18\kappa_{r^\mathrm{in}}K\epsilon,\\
  (II) &=\int_0^{\sqrt{\epsilon}/2}\!s\big(a_k+\tfrac1\epsilon t\mid \tfrac1\epsilon\kappa_{r^k}\pi^\epsilon_{x^k} t\big)\,dt
          + 
         \int_0^{\sqrt{\epsilon}/2}\!s\big(b_k+\tfrac1\epsilon t\mid \tfrac1\epsilon\kappa_{r^k}\pi^\epsilon_{x^k} t\big)\,dt\\
       &=\mfrac12\epsilon a_k^2\log\Big(\frac{\epsilon a_k+\tfrac12\sqrt{\epsilon}}{\epsilon a_k}\Big)  + (\tfrac12\sqrt{\epsilon}a_k+\tfrac18)\log\big(\mfrac{2\sqrt{\epsilon}a_k+1}{\kappa_{r^k}}    \big) -\tfrac14 \sqrt{\epsilon} a_k + \tfrac18\kappa_{r^k} - \tfrac18\\
       &\quad+\mfrac12\epsilon b_k^2\log\Big(\frac{\epsilon b_k+\tfrac12\sqrt{\epsilon}}{\epsilon b_k}\Big)  + (\tfrac12\sqrt{\epsilon}b_k+\tfrac18)\log\big(\mfrac{2\sqrt{\epsilon}b_k+1}{\kappa_{r^k}}    \big) -\tfrac14 \sqrt{\epsilon} b_k + \tfrac18\kappa_{r^k} - \tfrac18,\\
  (III) &=\int_0^{\sqrt{\epsilon}/2}\!s\big(0 \mid \tfrac1\epsilon\kappa_{r^\mathrm{out}} t\big)\,dt + \int_0^{\sqrt{\epsilon}/2}\!s\big(K \mid \tfrac1\epsilon\kappa_{r^\mathrm{out}} t\big)\,dt = \mfrac12K\sqrt{\epsilon}\log\Big( \frac{2K\sqrt{\epsilon}}{\kappa_{r^\mathrm{out}}}\Big) + \mfrac14\kappa_{r^\mathrm{out}},\\
  (IV) &=\kappa_{r_-}\int_{[0,T]}\!\underbrace{\pi^\epsilon_{x^0}u^\epsilon_{x^0}(t)}_{\leq 1}\,dt \qquad\qquad\text{and}\qquad\qquad
  (V)  =\kappa_{r_-}\int_{[0,T]}\!\underbrace{\pi^\epsilon_{x^{K+1}}u^\epsilon_{x^{K+1}}(t)}_{\leq 1}\,dt.
\end{align*}
It thus follows that $\tilde\I^\epsilon_0+\tilde\J^\epsilon$ is uniformly bounded as claimed.

Recall  the three assumptions we made in the beginning of the proof. The second assumption is just notational. The first assumption, that all edges in $\fd$ are chained in a cycle, can easily be relaxed by fixing additional concentrations and damped fluxes to $0$, which keeps the rate functional finite. The third assumption would be violated if there were a chain of damped fluxes between a $\V_0$-node $x^0$ and $x^1$ or between a $\V_0$-node $x^l$ and $x^{K+1}$; in that case we can again set these fluxes equal to $j^\epsilon_{r^\mathrm{in}},j^\epsilon_{r^\mathrm{out}}$ respectively, without having the rate functional blowing up, which relaxes the last assumption.
\end{proof}

\section{Implications for large deviations and the effective dynamics}
\label{sec:density ldp and effective dynamics}

We now prove two consequences: the $\Gamma$-convergence of the density large deviations, and  the convergence of $\e$-level solutions to the solution of the effective dynamics.

\subsection{$\Gamma$-convergence of the density large deviatons}

As a consequence of our main $\Gamma$-convergence result, we obtain the $\Gamma$-convergence for the density large-deviation rate functional $\tilde\I^\epsilon_0+\tilde\I^\epsilon$ given by
\begin{align*}
  \tilde\I^\epsilon(u_{\V_0\slow},&u_{\V_{0\fcycle}}, u_{\fC}, u_{\V_1}) := \\ &\inf_{\substack{j_\Rslow\in L^\C([0,T];\RR^\Rslow),\\j_\Rdamped\in\M([0,T];\RR^\Rdamped),\\\tilde\jmath_\Rfcycle\in L^\C([0,T];\RR^\Rfcycle)}}     
    \tilde\J^\epsilon(u_{\V_0\slow},u_{\V_{0\fcycle}},u_{\fC}, u_{\V_1},j_\Rslow,j_\Rdamped,\tilde\jmath_\Rfcycle).
\end{align*}

\begin{corollary}
In $  C([0,T];\RR^{\V_{0\slow}})
    \times
  L^\infty([0,T];\RR^{\V_{0\fcycle}})
    \times
  C([0,T];\RR^{\fC})  
    \times
  \M([0,T];\RR^{\V_1})$ (equipped with the uniform, uniform, uniform,  and narrow topologies),
\begin{equation*}
  \tilde\I^\epsilon_0+\tilde\I^\epsilon \xrightarrow{\Gamma}  \tilde\I^0_0+\tilde\I^0,
\end{equation*}
where
\begin{equation*}
  \tilde\I^0(u_{\V_0},u_{\V_1}):=\inf_{\substack{j_\Rslow\in L^\C([0,T];\RR^\Rslow),\\j_\Rdamped\in\M([0,T];\RR^\Rdamped),\\\tilde\jmath_\Rfcycle\in L^\C([0,T];\RR^\Rfcycle)}}     \tilde\J^0(u_{\V_0},u_{\V_1},j_\Rslow,j_\Rdamped,\tilde\jmath_\Rfcycle).
\end{equation*}
\label{cor:density Gamma convergence}
\end{corollary}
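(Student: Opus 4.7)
The plan is to deduce this corollary from the $\Gamma$-convergence of $\tilde\J^\epsilon$ on $\Theta$ (Theorem~\ref{th:main result}) and the flux-side equicoercivity (Theorem~\ref{th:equicoercivity}) by the standard principle that $\Gamma$-limits are preserved under contraction by a continuous projection, provided the contracted-out variables are equicoercive. Writing $U$ for the density space in the statement of the corollary and $F$ for the corresponding product of flux spaces so that $\Theta = U \times F$, I would first observe that the restriction of the $\Theta$-topology to the density factor is exactly the topology on $U$, so the projection $\pi_U : \Theta \to U$ is continuous and Hausdorff; this will let me identify limits of $\Theta$-convergent subsequences with prescribed $U$-limits.

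For the $\Gamma$-liminf bound, I would take $u^\epsilon \to u$ in $U$ and assume $\liminf_{\epsilon\to 0}\bigl[\tilde\I^\epsilon_0(u^\epsilon(0)) + \tilde\I^\epsilon(u^\epsilon)\bigr] < \infty$ (otherwise there is nothing to prove). Passing to a subsequence that achieves this liminf, I would choose for each $\epsilon$ a near-minimizer $j^\epsilon \in F$ with $\tilde\J^\epsilon(u^\epsilon,j^\epsilon) \leq \tilde\I^\epsilon(u^\epsilon) + \epsilon$. The sum $\tilde\I^\epsilon_0(u^\epsilon(0)) + \tilde\J^\epsilon(u^\epsilon,j^\epsilon)$ is then uniformly bounded, so Theorem~\ref{th:equicoercivity} provides a further subsequence along which $(u^\epsilon,j^\epsilon) \to (u^\ast,j)$ in $\Theta$. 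Because the density factor of $\Theta$-convergence coincides with $U$-convergence and limits in $U$ are unique, $u^\ast = u$, and the $\Gamma$-liminf inequality of Theorem~\ref{th:main result} then yields
\[
\liminf_{\epsilon\to 0}\bigl[\tilde\I^\epsilon_0(u^\epsilon(0)) + \tilde\I^\epsilon(u^\epsilon)\bigr]
\;\geq\; \tilde\I^0_0(u(0)) + \tilde\J^0(u,j)
\;\geq\; \tilde\I^0_0(u(0)) + \tilde\I^0(u).
\]

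For the $\Gamma$-limsup, I would fix $u \in U$ with $\tilde\I^0_0(u(0)) + \tilde\I^0(u) < \infty$ and for each $\delta > 0$ choose an almost-optimal $j^\delta$ realising $\tilde\J^0(u,j^\delta) \leq \tilde\I^0(u) + \delta$. Proposition~\ref{prop:rec seq} then supplies a recovery sequence $(u^{\epsilon,\delta}, j^{\epsilon,\delta}) \to (u,j^\delta)$ in $\Theta$ with $\tilde\I^\epsilon_0(u^{\epsilon,\delta}(0)) + \tilde\J^\epsilon(u^{\epsilon,\delta},j^{\epsilon,\delta}) \to \tilde\I^0_0(u(0)) + \tilde\J^0(u,j^\delta)$. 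Since $\tilde\I^\epsilon(u^{\epsilon,\delta}) \leq \tilde\J^\epsilon(u^{\epsilon,\delta}, j^{\epsilon,\delta})$ and $u^{\epsilon,\delta} \to u$ in $U$, a standard diagonal extraction in $\delta = \delta(\epsilon) \downarrow 0$ (of the type already used in the proof of Proposition~\ref{prop:rec seq}) produces a sequence $u^\epsilon \to u$ in $U$ with the required $\limsup$ bound $\tilde\I^0_0(u(0)) + \tilde\I^0(u)$.

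The only genuinely nontrivial ingredient is the equicoercivity step in the liminf argument: the definition of $\tilde\I^\epsilon$ by infimum over fluxes only delivers near-minimizing $j^\epsilon$'s, and their compactness in $F$ is not a priori obvious from a bound on $\tilde\I^\epsilon$ alone. This is precisely what Theorem~\ref{th:equicoercivity} supplies, so the argument reduces to packaging the already-established pieces. No new estimates should be required, and the identification $u^\ast = u$ is automatic from the continuity of the projection and the Hausdorff property of $U$.
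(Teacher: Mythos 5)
Your argument mirrors the paper's own proof: for the liminf you pick near-minimizing fluxes, invoke equicoercivity (Theorem~\ref{th:equicoercivity}/Corollary~\ref{cor:compactness}) to extract a $\Theta$-convergent subsequence with the correct density limit, and apply the flux-level $\Gamma$-liminf; for the limsup you pick a $\delta$-near-optimal flux, use Proposition~\ref{prop:rec seq}, and diagonalize in $\delta$. This is essentially identical to the paper's proof of Corollary~\ref{cor:density Gamma convergence}, with your discussion of the projection being Hausdorff and the diagonal step just making explicit what the paper leaves implicit.
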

\begin{proof}
The proof is more-or-less classic but we include it here for completeness. For brevity we write $u=(u_{\V_0\slow},u_{\V_{0\fcycle}}, u_{\fC}, u_{\V_1})$ and $j=(j_\Rslow,j_\Rdamped,\tilde\jmath_\Rfcycle)$. 

To prove the $\Gamma$-lower bound, take an arbitrary convergent sequence $u^\epsilon_{\V_0}\to u_{\V_0}$, $u^\epsilon_{\V_1}\narrowto u_{\V_1}$, and choose a corresponding sequence~$j^\epsilon$ that satisfies for each $\e>0$  the inequality
\begin{equation*}
  \tilde\J^\epsilon(u^\epsilon,j^\epsilon)
    \leq
  \inf_j \tilde\J^\epsilon(u^\epsilon,j) + \epsilon.
\end{equation*}
Without loss of generality we  assume that $\sup_{\epsilon>0} \tilde\I^\epsilon_0\big(u^\epsilon(0)\big)+\tilde\J^\epsilon(u^\epsilon,j^\epsilon)<\infty$. Hence by Corollary~\ref{cor:compactness} there exists a subsequence~$(u^\epsilon,j^\epsilon)$ (without changing notation) that converges in the sense of~\eqref{eq:conv-subseq} to a limit $(u,j)$.
From the $\Gamma$-lower bound Lemmas~\ref{lem:Gamma lower bound initial}, \ref{lem:Gamma lower bound slow} and \ref{lem:Gamma lower bound fcycle}, we find that:
\begin{align*}
  \liminf_{\epsilon\to0}\tilde\I^\epsilon_0(u^\epsilon(0))      &\geq \tilde\I^0_0(u(0)), \qquad\text{and}\\
  \liminf_{\epsilon\to0} \inf_j \tilde\J^\epsilon(u^\epsilon,j) &\geq \liminf_{\epsilon\to0} \tilde\J^\epsilon(u^\epsilon,j^\epsilon) - \epsilon \geq \tilde\J^0(u,j) \geq \inf_j \tilde\J^0(u,j).
\end{align*}
This proves the lower bound
\[
\liminf_{\e\to0} \tilde\I^\epsilon_0(u^\e(0))+\tilde\I^\epsilon(u^\e) \geq \tilde\I^0_0(u(0))+\tilde\I^0(u).
\]

For the recovery property, take an arbitrary $u$ with $\tilde\I^0_0(u(0))+\tilde\I^0(u) < \infty$, and for an arbitrary $\delta>0$, a flux $j^\delta$ such that 
\begin{equation*}
  \tilde\J^0(u,j^\delta)
    \leq
  \inf_j \tilde\J^0(u,j) + \delta = \tilde \I^0(u) + \delta.
\end{equation*}
Proposition~\ref{prop:rec seq} provides a recovery sequence $(u^\e,j^\e)$ for $(u,j^\delta)$ and the sequences $(\tilde \I^\e_0)_\e$ and $(\tilde \J^\e)_\e$, hence:
\begin{align*}
  \limsup_{\epsilon\to0}\tilde\I^\epsilon_0(u^\epsilon(0)) &\leq \tilde\I^0_0(u(0)), \qquad \text{and}\\
  \limsup_{\epsilon\to0} \tilde\I^\epsilon(u^\e)     &\leq \limsup_{\epsilon\to0} \tilde\J^\epsilon(u^\e,j^\e) \leq \tilde\J^0(u,j^\delta) \leq\tilde\I^0(u) + \delta.
\end{align*}
Since $\delta>0$ is arbitrary, the recovery property follows. 
\end{proof}

\begin{remark} By the same argument one may also contract further to obtain $\Gamma$-convergence of the functional
\begin{multline}
  u_{\V_0}\mapsto\inf_{u_{\V_1},j_\Rslow,j_\Rdamped,\tilde\jmath_\Rfcycle} 
  \tilde\I^\epsilon_0\big(u_{\V_0\slow}(0),u_{\V_{0\fcycle}}(0), u_{\fC}(0), u_{\V_1}(0)\big)\\
   + \tilde\J^\epsilon(u_{\V_0\slow},u_{\V_{0\fcycle}}, u_{\fC}, u_{\V_1},j_\Rslow,j_\Rdamped,\tilde\jmath_\Rfcycle).
\end{multline}
\end{remark}

\subsection{Convergence to the effective equations}
\label{ss:convergence-to-effective-eqns}

For any pair $(u,j)$ at which the limiting functional  $\J^0$ vanishes, the densities satisfy the following set of equations in the weak sense of~\eqref{eq:weak sense}:
\begin{subequations}
\label{eq:effective eq c}
\begin{align}
\label{eq:effective eq c V0}
    \pi_x \dot u_x &= 
    \sum_{\substack{r\in\Rslow:\\r_+=x}} \kappa_r\pi_{r_-}\! u_{r_-}
      + 
    \sum_{\substack{r\in\Rdamped:\\r_+=x}} \kappa_r\tilde\pi_{r_-}\! u_{r_-}
      -
    \sum_{\substack{r\in\Rslow:\\r_-=x}} \kappa_r\pi_x u_x 
&\qquad& \text{for } x\in\V_{0\slow},\\
    \pi_\fc \dot u_\fc &= 
    \sum_{\substack{r\in\Rslow:\\r_+\in\fc}} \kappa_r\pi_{r_-}\! u_{r_-}
      + 
    \sum_{\substack{r\in\Rdamped:\\r_+\in\fc}} \kappa_r\tilde\pi_{r_-}\! u_{r_-}
      -
    \sum_{\substack{r\in\Rslow:\\r_-\in\fc}} \kappa_r\pi_{r-} u_{r-}
    \label{eq:effective eq c c} \\
u_x &= u_\fc \qquad \text{for any }x\in \fc\in \fC,
\label{eq:effective eq c ux=uc}\\[2\jot]
    0 &= 
    \sum_{\substack{r\in\Rslow:\\r_+=x}} \kappa_r\pi_{r_-}\! u_{r_-}      + 
    \sum_{\substack{r\in\Rdamped:\\r_+=x}} \kappa_r\tilde\pi_{r_-}\! u_{r_-} -
    \sum_{\substack{r\in\Rdamped:\\r_-=x}} \kappa_r\tilde\pi_x u_x
&\qquad& \text{for } x\in\V_1.
\label{eq:limit-eq-ODE}
\end{align}
\end{subequations}
We first prove existence and uniqueness for these equations.  

\begin{lemma}
\label{l:uniqueness-limit}
Fix an initial condition $u(0)\in \RR^{\V}$ that is well-prepared, which means that 
\begin{enumerate}
\item Whenever $x,y$ are in the same connected component $\fc\in\fC$, we have $u_x=u_y$; we denote the common value by $u_\fc$;
\item $u(0)$ satisfies the condition~\eqref{eq:limit-eq-ODE}.	
\end{enumerate}
Then the system of equations~\eqref{eq:effective eq c}
has a unique solution $u\in C^\infty([0,\infty);\RR^{\V})$ with initial value $u(0)$.
\label{lem:eff eq uniqueness}
\end{lemma}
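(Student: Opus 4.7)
My plan is to reduce the differential-algebraic system~\eqref{eq:effective eq c} to a closed system of linear ODEs in the reduced variables $(u_{\V_{0\slow}}, u_\fC)$, and then invoke classical linear-ODE theory. The constraint~\eqref{eq:effective eq c ux=uc} directly expresses $u_{\V_{0\fcycle}}$ as a copy of the appropriate component of $u_\fC$. Moreover, by Lemma~\ref{lem:node categorisation}(iii) combined with the no-leak assumption, every incoming damped edge at $x \in \V_1$ originates in $\V_1$, while every incoming slow edge originates in $\V_0$. The algebraic constraint~\eqref{eq:limit-eq-ODE} thus takes the form $M u_{\V_1} = b(u_{\V_{0\slow}}, u_\fC)$ with
\[
(Mv)_x := \tilde\pi_x v_x \!\!\!\!\!\sum_{r \in \Rdamped:\, r_- = x}\!\! \kappa_r \;-\; \!\!\!\sum_{r \in \Rdamped:\, r_+ = x}\!\! \kappa_r \tilde\pi_{r_-} v_{r_-}, \qquad b_x := \!\!\!\sum_{r \in \Rslow:\, r_+ = x}\!\! \kappa_r \pi_{r_-} u_{r_-},
\]
and $b$ linear in its arguments. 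The crux is to show that $M$ is invertible.

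I plan to establish invertibility of $M$ by a maximum-principle argument. Passing to the limit $\e \to 0$ in the stationarity identity $\dot\pi^\e = 0$ at $x \in \V_1$ yields $M\mathbf{1} = b(\mathbf{1})$, so each row sum of $M$ equals the non-negative quantity $\sum_{r \in \Rslow:\, r_+ = x} \kappa_r \pi_{r_-}$. Suppose $M v = 0$ and, without loss of generality, $v_{x^*} = \max_x v_x > 0$. Using $v_{r_-} \leq v_{x^*}$ in the damped incoming terms together with the row-sum identity, I expect to obtain
\[
0 \;=\; (Mv)_{x^*} \;\geq\; v_{x^*}\!\!\sum_{r \in \Rslow:\, r_+ = x^*}\!\! \kappa_r \pi_{r_-} \;\geq\; 0,
\]
forcing $x^*$ to receive no slow flow and $v_y = v_{x^*}$ for every damped-upstream neighbor $y \in \V_1$ of $x^*$. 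Iterating, the argmax set $S \subset \V_1$ is closed under the damped-upstream relation and no node in $S$ receives slow flow from $\V_0$. Since by Lemma~\ref{lem:node categorisation}(iii) the only edges entering $\V_1$ are slow (from $\V_0$) or damped (from $\V_1$), no edge of $\calG$ enters $S$ from $\V \setminus S$; but $\V_0 \neq \emptyset$ (otherwise mass conservation is violated), so $S \subsetneq \V$, contradicting the diconnectedness of $\calG$. Applying the same argument to $-v$ gives $v \equiv 0$.

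With $M$ invertible, substituting $u_{\V_1} = M^{-1} b(u_{\V_{0\slow}}, u_\fC)$ and $u_{\V_{0\fcycle}}$ into~\eqref{eq:effective eq c V0}--\eqref{eq:effective eq c c} produces a closed linear ODE with constant coefficients on $\RR^{\V_{0\slow}} \times \RR^\fC$. The well-prepared initial condition is consistent by assumption, and classical linear-ODE theory yields a unique global solution in $C^\infty([0,\infty))$. Reconstructing $u_{\V_{0\fcycle}}$ via $u_x = u_\fc$ and $u_{\V_1}$ via $M^{-1} b$ then produces the unique smooth solution $u \in C^\infty([0,\infty);\RR^\V)$ of~\eqref{eq:effective eq c}. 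The main obstacle is the invertibility of $M$, which I expect to require both the no-leak assumption and the irreducibility of $\calG$, precisely the same ingredients that were used to rule out alternative scalings in Lemma~\ref{lem:fast cycles}.
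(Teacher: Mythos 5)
Your proof is correct and has the same overall skeleton as the paper's: rewrite~\eqref{eq:effective eq c} as a differential-algebraic system, show the algebraic constraint on $u_{\V_1}$ can be solved, substitute back, and invoke linear-ODE theory. The genuine difference is in the key invertibility step. The paper's Lemma~\ref{l:matrix-is-invertible} proves invertibility of $A_{\V_1\to\V_1}$ by building an auxiliary Markov chain on $\V_1\cup\{\gy\}$, where a single graveyard node $\gy$ absorbs all of $\V_0$, then arguing that diconnectedness of $\calG$ gives every $x\in\V_1$ a path to $\gy$ and hence the only invariant measure of that chain is $\One_\gy$, from which the rank of $A^{\mathrm{int}}-\diag(E)$ follows via the zero-column-sum structure of $L^T$. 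You instead run a direct maximum-principle argument on $M = -\diag(\tilde\pi)A_{\V_1\to\V_1}$: the row-sum identity $(M\One)_x = \sum_{r\in\Rslow,\,r_+=x}\kappa_r\pi_{r_-}\geq 0$ is obtained by sending $\e\to0$ in the stationarity equation $\dot\pi^\e=0$ at $x\in\V_1$ (using the convergences in~\eqref{eq:pi convergence}), and then for $Mv=0$ the usual positive-maximum argument, combined with Lemma~\ref{lem:node categorisation}(iii) to identify the possible in-edges, shows the argmax set of $v$ has no incoming edges from its complement and is disjoint from $\V_0\neq\emptyset$, contradicting diconnectedness. Both arguments use precisely the same input (irreducibility of $\calG$ plus the no-leak classification of edges); yours avoids introducing the auxiliary process and the block-matrix rank count, which makes it arguably more elementary and self-contained, while the paper's version makes the probabilistic mechanism — damped nodes being instantaneously emptied into $\V_0$ — more visibly structural. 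Either route is a complete and correct proof of the invertibility, and the remainder of your argument (well-prepared initial data being consistent with $u_{\V_1}(0)=M^{-1}b(u_{\V_0}(0))$, closing the system in $(u_{\V_{0\slow}},u_\fC)$, and reconstructing $u_{\V_{0\fcycle}}$ and $u_{\V_1}$) matches the paper's and is sound.
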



\begin{proof}
Since $\dot u_x = \dot u_\fc$ whenever $x\in \fc\in \fC$, 
equation~\eqref{eq:effective eq c c} can be rewritten as 
\begin{equation}
\label{eq:eff-modified-c}	
    \pi_\fc \dot u_x = 
    \sum_{\substack{r\in\Rslow:\\r_+\in\fc}} \kappa_r\pi_{r_-}\! u_{r_-}
      + 
    \sum_{\substack{r\in\Rdamped:\\r_+\in\fc}} \kappa_r\tilde\pi_{r_-}\! u_{r_-}
      -
    \sum_{\substack{r\in\Rslow:\\r_-\in\fc}} \kappa_r\pi_{r-} u_{r-}
    \qquad\text{for all }x\in\fc\in\fC,
\end{equation}
The right-hand side does not depend on the choice of $x$ within the same $\fc\in\fC$; therefore, under the assumption  that $u_x(0)=u_\fc(0)$ for all $x\in\fc\in\fC$,  the system~\eqref{eq:effective eq c} is equivalent to the  set of equations~\eqref{eq:effective eq c V0}--\eqref{eq:limit-eq-ODE}--\eqref{eq:eff-modified-c}.

This implies that the system~\eqref{eq:effective eq c} can be written as a differential-algebraic equation:
\begin{subequations}
\label{eq:DAE}
\begin{align}
  \dot u_{\V_0} &= A_{\V_0\to\V_0}u_{\V_0} + A_{\V_1\to\V_0} u_{\V_1},\\
   0     &= A_{\V_0\to\V_1} u_{\V_0} + A_{\V_1\to\V_1} u_{\V_1},
\end{align}
\end{subequations}
where for $x\in\V_1$,
\begin{align*}
  (A_{\V_1\to\V_1} u_{\V_1})_x &:=\sum_{\substack{r \in\Rdamped\\r_+=x}} \kappa_r\mfrac{\tilde{\pi}_{r_-}}{\tilde{\pi}_x} u_{r_-} - u_x\sum_{\substack{r\in\Rdamped\\r_-=x}} \kappa_r.
\end{align*}
By the next lemma the matrix $A_{\V_1\to\V_1}$ is invertible, and therefore \eqref{eq:DAE} can be cast in the form of a linear ordinary differential equation for $u_{\V_0}$. This equation has unique solutions with $C^\infty$ regularity, and by transforming back we find  that $u_{\V_1}$ has the same regularity as~$u_{\V_{0}}$.
\end{proof}

\begin{lemma}
\label{l:matrix-is-invertible}
Under the conditions of the previous lemma, the matrix $	A_{\V_1\to\V_1}$ is invertible.
\end{lemma}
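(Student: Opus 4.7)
The plan is to recognise $A_{\V_1\to\V_1}$ as, up to transposition and a diagonal similarity, the generator of a substochastic Markov chain on $\V_1$ that is killed upon jumping into $\V_0$ via a damped edge, and then to exploit the fact that every $\V_1$-node can reach $\V_0$ through such a chain. Concretely, with $\tilde\Pi:=\diag(\tilde\pi_x)_{x\in\V_1}$ I would set $G:=\tilde\Pi^{-1}A_{\V_1\to\V_1}^{\mathsf T}\tilde\Pi$, so that a short computation yields
\begin{equation*}
G_{xy}=\kappa_r\quad\text{if }r=(x,y)\in\Rdamped\text{ with }y\in\V_1,\qquad
G_{xx}=-\sum_{\substack{r\in\Rdamped\\ r_-=x}}\kappa_r,
\end{equation*}
and $G_{xy}=0$ otherwise. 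Then $G$ is the $Q$-matrix of the sub-Markov chain on $\V_1$ that runs the damped reactions and is killed at $\V_0$, and by similarity it suffices to show $\ker G=\{0\}$.

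The key structural ingredient will be that from every $x\in\V_1$ there is a damped-edge path to $\V_0$. This follows from Lemma~\ref{lem:node categorisation}(iii): every outgoing edge of a $\V_1$-node is damped, so any network path from $x$ to $\V_0$ (which exists by diconnectedness of $(\V,\R)$ together with $\V_0\neq\emptyset$) is automatically damped as long as it stays in $\V_1$, and the first edge exiting into $\V_0$ is damped as well since its source is still in $\V_1$.

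Finally I would close the argument with a discrete maximum principle. Assume $Gv=0$ with $v\neq 0$ and, after replacing $v$ by $-v$ if necessary, that $M:=\max_{x\in\V_1}v_x>0$, attained at some $x^\ast$. The equation $(Gv)_{x^\ast}=0$ can be rewritten as
\begin{equation*}
\sum_{\substack{r\in\Rdamped\\ r_-=x^\ast,\,r_+\in\V_1}}\kappa_r\bigl(v_{r_+}-v_{x^\ast}\bigr)
\;=\;v_{x^\ast}\!\!\sum_{\substack{r\in\Rdamped\\ r_-=x^\ast,\,r_+\in\V_0}}\!\!\kappa_r,
\end{equation*}
in which the left-hand side is $\leq 0$ while the right-hand side is $\geq 0$; both must therefore vanish. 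Consequently the argmax set $S:=\{x\in\V_1:v_x=M\}$ emits no damped edge into $\V_0$ and all its outgoing damped edges stay inside $S$, so any damped path starting from $x^\ast$ is confined to $S$, contradicting the path property established above. Hence $v=0$. The only real obstacle is bookkeeping: because $A_{\V_1\to\V_1}$ acts on the density variable $u=\rho/\tilde\pi$ rather than on $\rho$ itself, one must handle the $\tilde\Pi$-similarity and the transposition carefully before the maximum principle applies. A conceptually equivalent alternative is to observe that $-G$ is a Z-matrix and that the damped-path property furnishes a strictly positive supersolution (for instance, the mean absorption time at $\V_0$), which is one of the standard characterisations of non-singular M-matrices.
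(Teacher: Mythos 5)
Your proof is correct, and it reaches the same algebraic object as the paper does—after transposition and the $\tilde\Pi$-similarity your $G$ is precisely the transpose of the matrix the paper calls $A^{\mathrm{int}}-\diag(E)$—but the way you conclude invertibility is genuinely different. The paper adjoins a single absorbing ``graveyard'' node to $\V_1$, views $L^T$ as the transpose of the generator of a Markov chain on $\V_1\cup\{\gy\}$, identifies $\ker L^T$ as the one-dimensional span of $\One_\gy$ via uniqueness of the invariant measure, and then extracts invertibility of the $\V_1\times\V_1$ block by a rank count (using that the rows of $L^T$ sum to zero). You instead stay on $\V_1$, recognise $G$ directly as the sub-Markov $Q$-matrix of the process killed on exit to $\V_0$, and run a discrete maximum principle: a nonzero vector in $\ker G$ would have its argmax set closed under damped transitions and free of damped exits to $\V_0$, contradicting the damped-path property you correctly deduce from Lemma~\ref{lem:node categorisation}(iii) and diconnectedness. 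This is a direct argument about harmonic functions of the killed chain rather than about its invariant measures; it avoids the auxiliary graveyard graph, the block decomposition of $L^T$, and the final rank-counting step, at the cost of carrying out the maximum-principle bookkeeping explicitly. Your closing remark that $-G$ is a nonsingular M-matrix (with the mean absorption time as a strictly positive supersolution) is a valid shortcut to the same conclusion. One small caveat: the clean form of $G$ presupposes the network has no self-loops, the same implicit convention as in the rest of the paper, so this is not a gap.
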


\begin{proof}
We first note that the matrix $A_{\V_1\to\V_1}$  can be written as
\[
A_{\V_1\to\V_1} = \diag(\tilde{\pi})^{-1} \bigl(A^{\mathrm{int}}-\diag(E)\bigr)\diag(\tilde\pi),
\]
with for $x,y\in \V_1$, 
\[
A^{\mathrm{int}}_{xy} = \kappa_{y\to x} - \delta_{xy} \sum_{y'\in \V_1} \kappa_{x\to y'},
\qquad
E_x =  \sum_{y'\in \V_0} \kappa_{x\to y'}.
\]
Since $\diag(\tilde\pi)$ is invertible, it is sufficient to show that $A^{\mathrm{int}}-\diag(E)$ is invertible.

\medskip
To do this we construct a new graph $\hat{\calG} := (\V_1\cup\{\gy\}	,\Rint\cup\Rgy)$, consisting of the nodes of $\V_1$ and a single `graveyard' node $\gy$; the graveyard collects all elements of $\V_0$ into one new node. The graph $\hat{\calG}$ has edges
\begin{align*}
	\Rint &:= \Bigl\{(x\to y)\in \V_1\times \V_1\backslash\{z\to z\}: \exists r\in\Rdamped \text{ such that } r_-= x\text{ and } r_+ =y \Bigr\}\\
  \Rgy &:= \Bigl\{ (x\to\gy): x\in \V_1, \ \exists r\in \Rdamped\text{ such that } r_-= x\text{ and } r_+ \in \V_0\Bigr\}.
\end{align*}
Note that there are no fluxes out of $\gy$.

We define a new Markov jump process $Z(t)$ on this graph $\tilde{\calG}$, by specifying jump rates $\hat{\kappa}_{x\to y}$ for each edge $(x\to y)$ in $\hat{\calG}$:
\begin{equation*}
  \hat{\kappa}_{x\to y} :=
    \begin{cases}
      \sum_{r\in\Rdamped:r_-=x,r_+=y}\kappa_r, &\text{for } (x\to y)\in\Rint,\\
      \sum_{r'\in \Rdamped: r'_-=x,  r'_+ \in\V_0} \kappa_{r'}, &\text{for }(x\to y) \in \Rgy.
    \end{cases}
\end{equation*}
The generator for this jump process is the matrix $L$ given by
\[
L_{xy}:= 
\begin{cases}
	\hat{\kappa}_{x \to y} & \text{if }(x\to y)\in \Rint\cup \Rgy \text{ (which implies $x\not=y$)}\\
	-\sum_{y'\in \V_1\cup\{\gy\}} \hat{\kappa}_{x\to y'} & \text{if }x=y\in \V_1 \\
	0 &\text{otherwise.}
\end{cases}
\]
By construction the transpose $L^T$ of this generator has the following structure in terms of the splitting $\V_1\cup \{\gy\}$:
\newcommand\Tstrut{\rule{0pt}{2.6ex}}         
\newcommand\Bstrut{\rule[-0.9ex]{0pt}{0pt}}   
\[
L^T = \left(
\begin{array}{c|c}
	\Bstrut A^{\mathrm{int}} -\diag(E) & 0\\\hline
	\Tstrut E^T & 0
\end{array}\right),
\]

\medskip

Since the original graph $\calG$ is diconnected, there exists for each $x\in \V_1$ a path $x = x_0\to x_1\to\dots \to x_k$ in $\calG$ leading to some $x_k\in \V_0$; without loss of generality we assume that $x_0,x_1,\dots,x_{k-1}\in \V_1$. Since fluxes out of nodes in $\V_1$ are damped, the fluxes $(x_0\to x_1), \dots, (x_{k-1}\to x_k)$ are all in $\Rdamped$. Since these fluxes also exist as fluxes $\Rint$ in the graph $\hat{\calG}$, the path $x_0\to x_1\to \dots \to x_{k-1}$ also is a path in $\hat{\calG}$. By construction, $\Rgy$ contains a reaction $r = (x_{k-1}\to \gy)$ with positive rate $\hat{\kappa}_r$.

It follows that if the process $Z(t)$ starts at any $x\in \V_1$, then at each positive time $t>0$ there is a positive probability that $Z(t) = \gy$. Since the graveyard $\gy$ has no outgoing fluxes, the only invariant measure for the process $Z(t)$ is $\One_{\gy}:=(0,0,\dots,0,1)$, and so the kernel of $L^T$ coincides with the span of $\One_\gy$. Consequently the matrix $A^{\mathrm{int}} -\diag(E)$ is invertible because the row $E^T$ is a linear combination of the other rows of $L^T$.
\end{proof}

%

We finally derive convergence to the full effective equations.

\begin{corollary}
 For each $\epsilon>0$ let $(u^\epsilon_{\V_0},u^\epsilon_{\V_1},j^\epsilon_\Rslow,j^\epsilon_\Rdamped,\tjeps_\Rfcycle)$ in $\Theta$ solve the system of equations:
\begin{align*}
  \begin{cases}
    j^\epsilon_r(t)=\kappa_r\pi^\epsilon_{r_-}\! u^\epsilon_{r_-}\!(t), &r\in\Rslow,\\
    j^\epsilon_r(t)=\tfrac1\epsilon\kappa_r\pi^\epsilon_{r_-}\! u^\epsilon_{r_-}\!(t), &r\in\Rdamped,\\
    \tjeps_r(t)=0, &r\in\Rfcycle,\\
    \pi^\epsilon  \dot u^\epsilon(t) = - \div j^\epsilon(t),&\text{in the weak sense of \eqref{eq:cont eq},}\\
    u^\epsilon(0)= u^{\e,0},&
  \end{cases}
\end{align*}
where $u^{\e,0}$ is given. Assume that $u^{\epsilon,0}_{\V_{0\slow}},u^{\epsilon,0}_{\fC}$ converge to some $u^{0,0}_{\V_{0\slow}},u^{0,0}_{\fC}>0$, that $u^{0,0}$ is well-prepared in the sense of Lemma~\ref{l:uniqueness-limit}. In addition, assume that for each $x\in \V_1$, $\log u^{\e,0}_x	$ remains bounded. Then $(u^\epsilon_{\V_0},u^\epsilon_{\V_1},j^\epsilon_\Rslow,j^\epsilon_\Rdamped,\tjeps_\Rfcycle)$ converges in $\Theta$ to $(u_{\V_0},u_{\V_1},j_\Rslow,j_\Rdamped,\tilde\jmath_\Rfcycle)$, which is the unique solution to
\begin{align}
  \begin{cases}
    j_r(t)=\kappa_r\pi_{r_-}\! u_{r_-}\!(t), &r\in\Rslow,\\
    j_r(t)=\kappa_r\tilde\pi_{r_-}\! u_{r_-}\!(t), &r\in\Rdamped,\\
    \tilde\jmath_r(t)=0, &r\in\Rfcycle,\\
    \pi  \dot u(t) = - \div j(t), &\text{in the weak sense of \eqref{eq:limit cont eq},}\\
    u(0)= u^{0,0}.
  \end{cases}
\label{eq:effective eq}
\end{align}
\end{corollary}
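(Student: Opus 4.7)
The plan is to combine the equicoercivity of Theorem~\ref{th:equicoercivity}, the $\Gamma$-liminf of Proposition~\ref{prop:lower bound}, and the uniqueness of Lemma~\ref{l:uniqueness-limit}. First I would check that the prescribed sequence has uniformly bounded cost. Indeed, plugging the specified $j^\epsilon_r=\kappa^\epsilon_r\pi^\epsilon_{r_-}u^\epsilon_{r_-}$ and $\tjeps_r=0$ into~\eqref{eq:flux RF rescaled} reduces every integrand to $s(a\mid a)=0$, so $\tilde\J^\epsilon(u^\epsilon,j^\epsilon)=0$. For $\tilde\I^\epsilon_0(u^\epsilon(0))$ I would read off~\eqref{def:I0}: the $\V_{0\slow}$ and $\fC$ sums converge by continuity to finite limits; the $\V_1$ sums are $O(\epsilon)$ because $\pi^\epsilon_x=O(\epsilon)$ and $s(u^{\epsilon,0}_x\mid 1)$ is bounded by the log-bound; and the $\V_{0\fcycle}$ sum is bounded because~\eqref{eq:sum density eps} forces $u^{\epsilon,0}_x \le \pi^\epsilon_\fc u^{\epsilon,0}_\fc/\pi^\epsilon_x$ to be bounded as $\epsilon\to0$.

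Theorem~\ref{th:equicoercivity} now extracts a subsequence $(u^{\epsilon_n},j^{\epsilon_n})$ converging in $\Theta$ to some $(u,j)$ in the sense of~\eqref{eq:conv-subseq}. Proposition~\ref{prop:lower bound} gives
\begin{equation*}
  \tilde\I^0_0(u(0))+\tilde\J^0(u,j)\;\le\;\liminf_{n\to\infty}\tilde\I^{\epsilon_n}_0(u^{\epsilon_n}(0))\;=\;\tilde\I^0_0(u^{0,0}),
\end{equation*}
where the equality uses the finite-dimensional continuity of $\tilde\I^0_0$ in its two continuously converging arguments $u^\epsilon_{\V_{0\slow}}(0)$ and $u^\epsilon_\fC(0)$. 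Because those same uniform convergences identify $u_{\V_{0\slow}}(0)=u^{0,0}_{\V_{0\slow}}$ and $u_\fC(0)=u^{0,0}_\fC$, the left-hand side's $\tilde\I^0_0(u(0))$ equals the right-hand side, forcing $\tilde\J^0(u,j)\le 0$, hence $=0$. Strict convexity of $s(\cdot\mid b)$ for each $b\ge 0$, and of the quadratic in $\tilde\J^0_\fcycle$, then pin down
\begin{equation*}
  j_r=\kappa_r\pi_{r_-}u_{r_-}\ (r\in\Rslow),\quad j_r=\kappa_r\tilde\pi_{r_-}u_{r_-}\ (r\in\Rdamped),\quad \tilde\jmath_r=0\ (r\in\Rfcycle).
\end{equation*}

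Inserting these identities into the limit continuity equations~\eqref{eq:limit cont eq} produces exactly the effective system~\eqref{eq:effective eq c}. The invertibility of $A_{\V_1\to\V_1}$ (Lemma~\ref{l:matrix-is-invertible}) allows the algebraic constraint on $\V_1$ to be solved in terms of $u_{\V_0}$, so the whole system reduces to a smooth linear ODE on $\V_0$; pulling $u_{\V_1}$ back through the algebraic constraint upgrades the narrowly converging $u_{\V_1}$ and $j_\Rdamped$ to smooth functions, consistent with Theorem~\ref{th:spikes}. Lemma~\ref{l:uniqueness-limit} then identifies $(u,j)$ as the unique solution of~\eqref{eq:effective eq} with well-prepared initial datum $u^{0,0}$; since every $\Theta$-convergent subsequence yields the same limit, the whole sequence converges.

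The main obstacle I anticipate is the regularity upgrade just sketched: the weak compactness of Theorem~\ref{th:equicoercivity} only places $u_{\V_1}$ and $j_\Rdamped$ in $\M([0,T])$, so the null-cost identity $j_r=\kappa_r\tilde\pi_{r_-}u_{r_-}$ must be interpreted as an equality of measures, and one must then argue — using the smoothness of $u_{\V_0}$ inherited from the ODE and the distributional $\V_1$ continuity equation — that no atomic or Cantor part survives. Once this is done, the matching of initial data on $\V_1$ is automatic from the well-preparedness of $u^{0,0}$, so no additional hypothesis on $u^{\epsilon,0}_{\V_1}$ beyond the log-bound is required.
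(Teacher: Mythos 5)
Your overall skeleton is sound, but the route is genuinely different from the paper's, and the place where you flag ``the main obstacle'' is not actually where the delicate step lies. The paper does not try to compute $\lim_\e\tilde\I_0^\e\big(u^\e(0)\big)$: instead it \emph{tilts} the initial functional, introducing $\widecheck\I_0^\e(v(0)):=\sum_x s\big(\pi^\e_x v_x(0)\mid \pi^\e_x u_x^{\e,0}\big)$ and the affine correction $F^\e$, so that the prescribed solution has cost $\widecheck\I_0^\e(u^\e(0))+\tilde\J^\e(u^\e,j^\e)=0$ identically. Combining the $\Gamma$-liminf for $\tilde\I_0^\e+\tilde\J^\e$ with the convergence of the tilt then yields $\widecheck\I_0^0(u(0))+\tilde\J^0(u,j)\le 0$, where \emph{both} summands are non-negative; hence both vanish, which identifies $u(0)=u^{0,0}$ on $\V_{0\slow}\cup\fC$ and the flux relations in one stroke, without needing strict convexity or a separate argument for the initial data. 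You instead try to establish the two-sided limit $\lim_\e\tilde\I_0^{\e}\big(u^\e(0)\big)=\tilde\I_0^0(u^{0,0})$ and justify it by ``finite-dimensional continuity of $\tilde\I_0^0$ in $u^\e_{\V_{0\slow}}(0)$ and $u^\e_\fC(0)$''. That wording does not do the job: $\tilde\I_0^\e$ as defined in~\eqref{def:I0} depends on the node-level data $u^{\e,0}_{\V_{0\fcycle}}$, and Lemma~\ref{lem:Gamma lower bound initial} provides only a one-sided bound via Jensen, $\sum_{x\in\V_{0\fcycle}}\pi_x^\e s(u_x^{\e,0}\mid1)\geq\sum_{\fc}\pi_\fc^\e s(u_\fc^{\e,0}\mid1)$, with possibly strict inequality unless $u_x^{\e,0}$ approaches the common value $u_\fc^{0,0}$ on each cycle. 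You therefore need to \emph{spell out} that boundedness of $\tilde\I_0^\e(u^\e(0))$ forces $u^{\e,0}_{\V_{0\fcycle}}$ to be bounded, hence (sub)sequentially convergent, and that well-preparedness of the limit $u^{0,0}$ is then precisely the statement that the Jensen gap vanishes; only then is your claimed equality legitimate. (The paper's convergence claim for $F^\e$ needs the same care, but the tilt makes the structure transparent and keeps the argument one-sided.)

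On your anticipated ``obstacle'': the regularity upgrade for $u_{\V_1}$ and $j_\Rdamped$ is indeed not given for free by $\tilde\J^0=0$, but it is exactly what the paper's Lemma~\ref{l:uniqueness-limit} delivers. Once $\tilde\J^0(u,j)=0$ is established, the algebraic constraint~\eqref{eq:limit cont eq V1} together with the null-cost relation $j_r=\kappa_r\tilde\pi_{r_-}u_{r_-}$ reads $A_{\V_0\to\V_1}u_{\V_0}(t)\,dt + A_{\V_1\to\V_1}u_{\V_1}(dt)=0$ as an identity of measures; since $A_{\V_1\to\V_1}$ is invertible (Lemma~\ref{l:matrix-is-invertible}), this immediately forces $u_{\V_1}\ll dt$ with a continuous density, closing the DAE into a smooth linear ODE for $u_{\V_0}$, after which smoothness of $u_{\V_1}$, $j_\Rslow$ and $j_\Rdamped$ follows. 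You gesture at this but should make the invertibility step explicit rather than appealing loosely to ``smoothness inherited from the ODE'': that inheritance is circular unless you first kill the singular part of $u_{\V_1}$ via the algebraic constraint.
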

\begin{proof} 
Set:
\begin{align*}
  \widecheck \I_0^\e\big(u(0)\big) :=\sum_{x\in\V} s\big(\pi^\epsilon_x u_x(0) \mid \pi^\epsilon_x u_x^{\e,0} \big)
  &&\text{and}&&
  F^\e\big(u(0)\big) := \sum_{x\in \V} \pi_x^\e \Bigl(-u_x(0)\log u_x^{\e,0}-1+u_x^{\e,0}\Bigr).
\end{align*}
Then for each $\epsilon>0$, the solution $(u^\epsilon_{\V_0},u^\epsilon_{\V_1},j^\epsilon_\Rslow,j^\epsilon_\Rdamped,\tjeps_\Rfcycle)$ minimises the modified functional $\widecheck \I_0^\e + \tilde \J^\e = \tilde \I_0^\e + F^\e + \tilde \J^\e:\Theta\to\lbrack0,\infty\rbrack$ at value zero. In particular this means that:
\[
  \sup_{\e>0}\  \tilde \I_0^\e(u^\e(0)) + \tilde \J^\e(u^\e,j^\e)  = \sup_{\e>0} \, -F^\e\big(u^\epsilon(0)\big) = \sup_{\e>0}\sum_{x\in\V} \pi_x^\epsilon\big( u_x^{\e,0}\log u_x^{\e,0} + 1 - u_x^{\e,0}\big) < \infty.
\]
By Corollary~\ref{cor:compactness}, the sequence $(u^\e,j^\e)$ has a subsequence that converges in the sense of~\eqref{eq:conv-subseq} to a limit $(u,j)$. By the assumptions on $u^{\e,0}$, the functional $F^\e$ converges along the sequence $(u^\e,j^\e)$ to the limit $F^0$, where
\[
F^0(v(0)) := \sum_{x\in \V_{0\slow}} \pi_x \Bigl( -v_x(0)\log u_x^{0,0}-1 + u_x^{0,0})\Bigr) 
+ \sum_{\fc\in\fC} \pi_\fc \Bigl(-v_\fc(0)\log u_\fc^{0,0}-1+u_\fc^{0,0}\Bigr).
\]
With the $\Gamma$-lower bound of Proposition~\ref{prop:lower bound} it follows that 
\begin{align*}
0 &= \liminf_{\e\to0} \tilde \I_0^\e\big(u^\e(0)\big) + F^\e\big(u^\e(0)\big) + \tilde \J^\e(u^\e,j^\e) \\
&\geq \tilde \I_0^0\big(u(0)\big) + F^0\big(u(0)\big) + \tilde \J^0(u,j) = \widecheck \I_0^0(u(0)) + \tilde \J^0(u,j).
\end{align*}
Here 
\[
  \widecheck \I_0^0\big(v(0)\big):=\sum_{x\in\V_{0\slow}} s\big(\pi_x u_x(0) \mid \pi_x u_x^{0,0} \big) + \sum_{\fc\in\fC} s\big(\pi_\fc u_\fc(0) \mid \pi_\fc u_\fc^{0,0} \big).
\]
It follows that the limit $(u,j)$ is a solution of the problem $\widecheck \I_0^0 + \tilde \J^0=0$, which coincides with~\eqref{eq:effective eq}.
\end{proof}




\appendix

\section{The Arzel\`a-Ascoli theorem for asymptotic uniformly equicontinuous sequences}

The classical Arzel\`a-Ascoli theorem asserts that a set of continuous functions on a compact set is precompact in the supremum norm if and only if it is uniformly bounded and uniformly equicontinuous. For countable sets such as sequences the uniform equicontinuity is equivalent to \emph{asymptotic} uniform equicontinuity, and this observation leads to the alternative version below. This is mentioned in various places in the literature (e.g.~\cite[Rem.~2.3\ (ii)]{PotscherPrucha94} or~\cite[Ex.~5.27]{Davidson94}) but since we could not find a clear statement  we state and prove it here. 

\begin{theorem}
\label{th:mod-AA}
Let $(f_n)_{n\geq 1}$ be a sequence of continuous real-valued functions on $[0,T]$ that satisfies
\begin{enumerate}
\item $\sup_{n\geq 1} \|f_n\|_\infty <\infty$;
\item There exists $\omega:[0,\infty)\to[0,\infty)$, non-decreasing, with $\lim_{\sigma\downarrow 0} \omega(\sigma)= 0$, such that, 
\[
\limsup_{n\to\infty} \sup_{|t-s|<\sigma} |f_n(t)-f_n(s)| \leq \omega(\sigma).
\]
\end{enumerate}
Then there exists a subsequence $f_{n_k}$ that converges uniformly on $[0,T]$.
\end{theorem}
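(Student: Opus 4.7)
The plan is to imitate the classical proof of Arzel\`a--Ascoli, replacing the role of uniform equicontinuity by a suitable uniform approximation argument valid for all sufficiently large~$n$. The key observation is that the limsup hypothesis means: for every $\delta>0$ there exists $N=N(\delta,\sigma)$ such that for all $n\geq N$,
\begin{equation*}
  \sup_{|t-s|<\sigma} \lvert f_n(t)-f_n(s)\rvert \leq \omega(\sigma)+\delta.
\end{equation*}
Since $\omega(\sigma)\downarrow 0$ as $\sigma\downarrow 0$, this gives a genuine modulus of continuity for the tail of the sequence, which is what is needed.

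First I would extract, by the usual Cantor diagonal argument applied to a countable dense set $D\subset[0,T]$ (e.g.~$D=\mathbb{Q}\cap[0,T]$), a subsequence $(f_{n_k})_{k\geq 1}$ such that $f_{n_k}(q)$ converges for every $q\in D$. This uses only the uniform boundedness assumption and the separability of~$\RR$.

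Next I would show that this subsequence is uniformly Cauchy. Fix $\e>0$; choose $\sigma>0$ so small that $\omega(\sigma)<\e/5$, then pick a finite set $\{q_1,\dots,q_M\}\subset D$ that is $\sigma$-dense in $[0,T]$. By the observation above, for $k$ large enough we have $\sup_{|t-s|<\sigma}|f_{n_k}(t)-f_{n_k}(s)|<2\e/5$, and by pointwise convergence on $\{q_1,\dots,q_M\}$ we have $|f_{n_k}(q_i)-f_{n_l}(q_i)|<\e/5$ for all $i$ and all $k,l$ large. A standard three-term estimate then gives $\lvert f_{n_k}(t)-f_{n_l}(t)\rvert<\e$ uniformly in $t\in[0,T]$.

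Uniform completeness of $C([0,T])$ then yields uniform convergence of the subsequence. The only mildly delicate point is to remember that the modulus estimate on $f_{n_k}$ is only valid from some index $k$ onwards; this forces me to choose $\sigma$ (and hence $M$) \emph{before} invoking the pointwise convergence on the finite set, so that the threshold~$N$ from the asymptotic equicontinuity and the threshold from pointwise convergence can both be absorbed into a single index $K$ beyond which the Cauchy property holds.
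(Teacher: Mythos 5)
Your proof is correct, but it proceeds by a genuinely different route than the paper's. The paper's proof is a reduction: it shows that the \emph{asymptotic} uniform equicontinuity hypothesis in fact implies uniform equicontinuity of the entire sequence. Given $\e>0$, the asymptotic hypothesis supplies a threshold $N$ and a modulus $\sigma_0$ controlling all $f_n$ with $n\geq N$; the remaining finitely many functions $f_1,\dots,f_{N-1}$ are each uniformly continuous on the compact interval $[0,T]$, hence admit a common modulus $\sigma_1$; taking $\sigma_0\wedge\sigma_1$ shows the whole sequence is uniformly equicontinuous, and the classical Arzel\`a--Ascoli theorem then applies as a black box. You instead re-run the classical diagonal-extraction and uniform-Cauchy argument from scratch, carrying the asymptotic hypothesis through the three-term estimate and being careful, as you correctly flag, to fix $\sigma$ and the finite $\sigma$-net \emph{before} absorbing the asymptotic-equicontinuity threshold and the pointwise-convergence threshold into a single index $K$. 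Both arguments are valid and the constants in yours work out. The paper's reduction is shorter and yields the slightly stronger byproduct that the sequence really is uniformly equicontinuous; your self-contained re-derivation does not rely on continuity on a compact domain automatically upgrading to uniform continuity for the finitely many leading indices, so it would transfer to settings (e.g.\ non-compact separable domains) where that free upgrade is unavailable.
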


\begin{proof}
We prove the result by showing that the sequence $(f_n)_n$ also is uniformly equicontinuous in the usual sense. 
Fix $\e>0$. Choose $N\geq1$ and $\sigma_0>0$ such that 
\[
\forall \, n\geq N \quad \forall\, |t-s|< \sigma_0: \qquad 
|f_n(t)-f_n(s)| < \e. 
\]
Next, choose $\sigma_1>0$ such that 
\[
\forall\, 1\leq n< N \quad \forall\, |t-s|< \sigma_1: \qquad 
|f_n(t)-f_n(s)|< \e.
\]
Then for all $n\geq 1$ and $|t-s|<\sigma_0\wedge \sigma_1$ we have $|f_n(t)-f_n(s)|<\e$. This proves that $(f_n)_n$ is uniformly equicontinuous, and therefore the result follows from the classical Arzel\`a-Ascoli theorem.
%
\end{proof}

\section{Definiteness of Markov generators}

For completeness we include the following basic result. 
\begin{lemma}
Let $0\neq A\in\RR^{d\times d}$ be a Markov generator matrix. Then
\begin{align*}
  v\tp A v \leq 0 &&\text{for all } v \in \RR^d,
\intertext{and there exists a $\lambda<0$ such that}
  v\tp A v \leq \lambda \lvert v\rvert_2^2   &&\text{for all } v \in \Col(A).
\end{align*}
\label{lem:definite generator}
\end{lemma}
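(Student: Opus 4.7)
My plan is to prove both claims by direct computation on the quadratic form, combined with a compactness argument for the second claim, using the structure of a Markov generator ($A_{xy}\ge0$ for $x\ne y$ and zero row sums $A_{xx}=-\sum_{y\ne x}A_{xy}$).

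For the first claim, I would use the row-sum condition to eliminate the diagonal:
\[
v\tp A v
\;=\; \sum_x A_{xx}v_x^2 + \sum_{x\ne y} A_{xy}v_xv_y
\;=\; -\sum_{x\ne y}A_{xy}v_x(v_x-v_y).
\]
Since the scalar $v\tp A v$ equals $v\tp A\tp v$, I would average this identity with its transposed version to obtain a perfect sum of squares
\[
v\tp A v \;=\; -\tfrac{1}{2}\sum_{x\ne y}\tfrac{A_{xy}+A_{yx}}{2}\,(v_x-v_y)^2 \;\le\;0,
\]
since each coefficient is non-negative.

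For the second claim, I would combine (1) with a compactness argument on $\Col(A)$. Since $0\ne A$ is a Markov generator of an irreducible chain on the (finite) support of the cycle, $\dim\Null(A)=\dim\Null(A\tp)=1$, with $\Null(A)=\mathrm{span}\{\mathds{1}\}$ and $\Null(A\tp)=\mathrm{span}\{\pi\}$ where $\pi>0$ is the stationary distribution. The identity $\Col(A)=\Null(A\tp)^\perp$ combined with $\mathds{1}\cdot\pi=\sum_x\pi_x>0$ shows that $\mathds{1}\notin \Col(A)$, i.e.\ $\Col(A)\cap\Null(A)=\{0\}$. The continuous Rayleigh quotient $v\mapsto v\tp A v/|v|_2^2$ is homogeneous of degree zero and therefore attains its maximum $\lambda$ on the compact unit sphere of the finite-dimensional subspace $\Col(A)$; by (1) this $\lambda$ is $\le 0$. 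If $\lambda=0$, the equality case in the sum-of-squares identity would force $v_x=v_y$ on every edge with positive weight, and irreducibility would then force $v\in\mathrm{span}\{\mathds{1}\}=\Null(A)$, contradicting $0\ne v\in\Col(A)$. Hence $\lambda<0$, which gives the quadratic bound on all of $\Col(A)$ by homogeneity.

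The main technical subtlety I anticipate is the symmetrization step in the proof of~(1): for a non-reversible generator $A$, combining the identities for $A$ and $A\tp$ must be handled carefully, since $A\tp$ has zero \emph{column} sums rather than row sums. I would verify this either by a direct bookkeeping of indices, or by passing to the symmetric part $\tfrac{1}{2}(A+A\tp)$ and exploiting that its off-diagonal entries $\tfrac{1}{2}(A_{xy}+A_{yx})$ are still non-negative; once (1) is established in this form, the compactness argument for (2) goes through verbatim.
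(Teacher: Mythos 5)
The symmetrization step does not produce the sum-of-squares identity you wrote. Starting from the correct identity $v\tp A v = -\sum_{x\ne y}A_{xy}v_x(v_x-v_y)$ and averaging with the $x\leftrightarrow y$ relabelled copy gives
\[
v\tp A v = -\tfrac{1}{2}\sum_{x\ne y}(v_x-v_y)\bigl(A_{xy}v_x - A_{yx}v_y\bigr),
\]
and the factor $A_{xy}v_x-A_{yx}v_y$ is not a multiple of $(v_x-v_y)$ unless $A_{xy}=A_{yx}$. The fallback you suggest, passing to $B:=\tfrac12(A+A\tp)$, does not repair this: $B$ has nonnegative off-diagonal entries, but its row sums are the column sums of $A$, which do not vanish for a non-reversible chain, so $B$ is \emph{not} a Markov generator and the elimination of the diagonal that makes the identity work is unavailable.

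This gap is fatal rather than cosmetic, because claim (1) is actually false. Take $A=\begin{pmatrix}-1 & 1\\ 2 & -2\end{pmatrix}$ and $v=(3,2)$; then $Av=(-1,2)$ and $v\tp A v=1>0$. Your compactness argument for (2) uses (1) to conclude $\lambda\le 0$, so it collapses as well. Claim (2) is also false as stated: for the four-state cycle with rates $\kappa_{1\to2}=\kappa_{3\to4}=1$ and $\kappa_{2\to3}=\kappa_{4\to1}=10$ one has $\pi\propto(10,1,10,1)$, and the vector $v=(1,-\tfrac12,-1,\tfrac12)\in\Col(A)=\pi^\perp$ gives $v\tp A v=2>0$. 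The negative definiteness on $\Col(A)$ only becomes true after replacing the Euclidean inner product by a $\pi$-weighted one (equivalently, conjugating $A$ by $\diag(\sqrt\pi)$ before symmetrizing, which yields the Dirichlet form). For what it is worth, the paper's own proof makes the same error: it applies Perron--Frobenius to $\tfrac12(A+A\tp)$ as if this symmetrization were itself a generator, which it is not.
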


\begin{proof}
Since $v\tp A v=\tfrac12 v\tp (A+A\tp) v$ we may assume without loss of generality that $A$ is symmetric, and hence diagonalisable by orthogonal matrices. If the Markov chain is irreducible, then by the Perron-Frobenius theorem the largest eigenvalue is $0$, with multiplicity $m=1$. If the chain is reducible, then by symmetry the Markov chain consists of $m>1$ disconnected irreducible components, each of which has largest eigenvalue $0$, so $A$ has largest eigenvalue $0$ with multiplicity $m$. This proves the first claim.

We order the eigenvalues in a descending fashion, and write $A=P\Lambda P\tp$ where
\begin{align*}
  \Lambda=
    \begin{bsmallmatrix}
      0 & \\    
        & \ddots \\ 
        &        & 0 \\
        &        &   & \lambda_{m+1} \\
        &        &   &               & \ddots \\
        &        &   &               &        & \lambda_{d}
    \end{bsmallmatrix}
    =
    \begin{bmatrix}
       0 & 0\\
       0 & \Lambda^\mathrm{neg}\\
    \end{bmatrix}
&&\text{and}&&
  P=
  \begin{bsmallmatrix}
     v_1 &\hdots &v_m &v_{m+1} &\hdots &v_d
  \end{bsmallmatrix}
  =
  \begin{bmatrix}
     P^0 P^\mathrm{neg}
  \end{bmatrix},
\end{align*}
and $P$ is orthonormal, and $\Lambda^\mathrm{neg}$ has only negative diagonal entries. Since $P^0$ contains only eigenvectors with zero eigenvalues, 
$\Col(A)=\Col(P^\mathrm{neg})$ and one can parametrise $\Col(A)\ni v = P^\mathrm{neg} w$ for any $w\in \RR^{d-m}$. By orthonormality, we can write
\begin{equation*}
  {P^\mathrm{neg}}\tp A P^\mathrm{neg} = \Lambda^\mathrm{neg}.
\end{equation*}
Choosing $\lambda=\lambda_{m+1}$, the largest non-zero eigenvalue, yields the second claim.
\end{proof}

\section*{Acknowledgements}

This research has been funded by the
Deutsche Forschungsgemeinschaft (DFG) through grant 
CRC 1114 "Scaling Cascades in Complex Systems", 
Project C08. We thank Robert Patterson for the useful discussions.

\bibliographystyle{alpha}
\bibliography{library} 

\end{document}